    \renewcommand*{\bm}[1]{#1}%
\numberwithin{equation}{section}
\def\rank{\mbox{\rm rank}}
\def\Nul{\mbox{\rm Nul}}
\def\sign{\mbox{\rm Sign}}
\def\Sign{\mbox{\rm sign}}
\def\face{\mbox{\rm Face}}
\def\pos{\mbox{\rm pos}}
\def\supp{\mbox{\rm supp}}
\def\relint{\mbox{\rm relint}}
\def\int{\mbox{\rm int}}
\def\aff{\mbox{\rm aff}}
\def\min{\mbox{\rm min}}
\def\And{\mbox{\rm ~and~}}
\def\Or{\mbox{\rm ~or~}}
\def\For{\mbox{\rm ~for~}}
\def\i{\mbox{\rm (\hspace{0.2mm}i\hspace{0.2mm})}\,}
\def\ii{\mbox{\rm (\hspace{-0.1mm}i\hspace{-0.2mm}i\hspace{-0.1mm})}\,}
\def\iii{\mbox{\rm (\hspace{-0.5mm}i\hspace{-0.3mm}i\hspace{-0.3mm}i\hspace{-0.5mm})}\,}
\def\iv{\mbox{\rm (\hspace{-0.5mm}i\hspace{-0.3mm}v\hspace{-0.5mm})}\,}
\def\Span{\mbox{\rm span}}
\def\Col{\mbox{\rm Col\,}}
\def\Sqcup{\textstyle\bigsqcup\limits}
\def\Sqcup{\textstyle\bigsqcup\limits}
\def\red{\color{red}}
\def\({\mbox{\rm (}}\def\){\mbox{\rm )}}
\newcommand{\Rmnum}[1]{\expandafter\@slowromancap\romannumeral #1@}
\newtheorem{theorem}{Theorem}[section]
\newaliascnt{lemma}{theorem}
\newtheorem{lemma}[lemma]{Lemma}
\newaliascnt{proposition}{theorem}
\newtheorem{proposition}[proposition]{Proposition}
\newaliascnt{fact}{theorem}
\newtheorem{fact}[fact]{Fact}
\newaliascnt{definition}{theorem}
\newtheorem{definition}[definition]{Definition}
\newaliascnt{conjecture}{theorem}
\newaliascnt{corollary}{theorem}
\newtheorem{corollary}[corollary]{Corollary}
\newaliascnt{claim}{theorem}
\newaliascnt{problem}{theorem}
\newaliascnt{question}{theorem}
\newaliascnt{remark}{theorem}
\newaliascnt{example}{theorem}
\newtheorem{example}[example]{Example}
\newaliascnt{notation}{theorem}
\begin{document}
\begin{center}
{\Large\bf
Deformed Intersections of Half-spaces
}\\ [7pt]
\end{center}
\vskip 3mm

\begin{center}
Houshan Fu$^{1}$, Boxuan Li$^{2}$, Chunming Tang$^{3}$ and Suijie Wang$^{4,*}$\\[8pt]

 $^{1,3}$School of Mathematics and Information Science\\
 Guangzhou University\\
Guangzhou 510006, Guangdong, P. R. China\\[12pt]

 $^{2,4}$School of Mathematics\\
 Hunan University\\
 Changsha 410082, Hunan, P. R. China\\[15pt]

$^{*}$Correspondence to be sent to: wangsuijie@hnu.edu.cn\\
Emails: $^{1}$fuhoushan@gzhu.edu.cn, $^{2}$liboxuan@hnu.edu.cn,  $^{3}$ctang@gzhu.edu.cn\\[15pt]
\end{center}

\vskip 3mm
\begin{abstract}
This paper is devoted to the classification problems concerning extended deformations of convex polyhedra and real hyperplane arrangements in the following senses: combinatorial equivalence of face posets, normal equivalence on normal fans of convex polyhedra, and sign equivalence on half-spaces.  The extended deformations of convex polyhedra arise from parallel translations of given half-spaces and hyperplanes, whose normal vectors give rise to the so-called ``derived arrangement'' proposed by Rota as well as Crapo in different forms. We show that two extended deformations of convex polyhedra are normally (combinatorially, as a consequence) equivalent if they are parameterized by the same open face of the derived arrangement. Note that these extended deformations are based on parallel translations of the given hyperplanes. It allows us to study three deformations of real hyperplane arrangements: parallel translations, conings, and elementary lifts, whose configuration spaces are parameterized by open faces of the derived arrangement. Consequently, it gives a characterization of the normal, combinatorial, and sign equivalences of those three deformations via the derived arrangement. Additionally, the relationships among these three equivalence relations are discussed, and several new descriptions of real derived arrangements associated with faces and sign vectors of real hyperplane arrangements are provided.
\vspace{1ex}\\
\noindent{\bf Keywords:} Convex polyhedron, hyperplane arrangement, deformation, parallel translation, coning, elementary lift, derived arrangement.\vspace{1ex}\\
{\bf MSC classes:} 52C35, 52B05, 52C40.
\end{abstract}
\section{Introduction}\label{Sec-1}
Convex polyhedra are very interesting geometric objects with rich combinatorial structures. Their study is closely linked to various combinatorial objects, such as hyperplane arrangements and (affine) oriented matroids. Recently, research on deformations of  polytopes is greatly flourishing, with new and exciting ideas continuously emerging. A deformation of a polytope $P$ is also a polytope obtained from $P$ by translating the facets of $P$ without passing a vertex.  As the most prominent example, the generalized permutohedra are deformations of a regular permutohedron, initially introduced by Edmonds in 1970 under the name of polymatroids \cite{Edmonds1970} and rediscovered by Postnikov in 2009 \cite{Postnikov2009}. Topics related to deformations of polytopes have attracted a lot of research attentions, such as Aguiar-Ardila \cite{Aguiar-Ardila2023}, Ardila-Sanchez \cite{Ardila-Sanchez2023}, Castillo-Liu \cite{Castillo-Liu2022}, Lee-Liu \cite{Lee-Liu2024}, Padrol-Pilaud-Poullot \cite{Padrol2022,Padrol2023,Padrol2023-1}, etc. Their primary focus is on studying the descriptions, combinatorial structures and algebraic structures of deformations and deformation cones for various interesting polytopes including permutohedra, nestohedra, graphical zonotopes, among others. However, our primary interest lies in investigating the classification problem concerning  normal and combinatorial equivalences on extended deformations of convex polyhedra that arise from parallel translations of given half-spaces and hyperplanes. Notably, these extended deformations correspond to the faces of parallel translations of a given linear arrangement. This correspondence enables us to explore a similar topic related to normal, combinatorial and sign equivalence classes in three types of deformations of  real hyperplane arrangements: parallel translation, coning and elementary lift. Since sign vectors of real hyperplane arrangements naturally induce (affine) oriented matroids in \cite{Bjorner1999}, these associated questions are also studied in (affine) oriented matroids.

The deformations of a polytope precisely form a polyhedral cone under dilation and Minkowski sum, called the deformation cone \cite{Postnikov2009}. Postnikov, Reiner and Williams \cite{Postnikov2008} offered five equivalent ways to characterize deformations of a simple polytope, which naturally lead to different descriptions for the deformation cone including vertex deformation cone, edge length deformation cone, facet deformation cone, among others. Most recently,  Castillo and Liu \cite{Castillo-Liu2022} not only extended their methods to general polytopes, but also introduced two additional equivalent definitions for the deformations of polytopes. Their definition  in \cite[Definition 2.2]{Castillo-Liu2022}, based on the idea of   ``moving facets without passing vertices", provided a crucial technique for parameterizing the deformation cone of a polytope using the height deformation cone.

Inspired by Castillo-Liu's work, we define and examine extended deformations of convex polyhedra. To this end, we always work with $m$ fixed non-zero vectors $\bm u_1,\bm u_2,\ldots,\bm u_m$ in the Euclidean space $\mathbb{R}^n$.  Let $U$ be a fixed $m\times n$ real matrix with row vectors $\bm u_1,\bm u_2,\ldots,\bm u_m$ in turn. For any subset $I\subseteq[m]$, denote by $U_I$ the submatrix of $U$ consisting of rows indexed by $I$, and by $\bm a_I$ the subvector of $\bm a\in\mathbb{R}^m$ according to the same convention. We define $(\bm a, I,J,K)$ as a tetrad, where $I,J,K$ is always a partition of the set $[m]$ and $\bm a\in\mathbb{R}^m$. Associated with each such tetrad $(\bm a, I,J,K)$, we consider the (possibly empty) convex polyhedron $P(\bm a,I,J,K)$ in $\mathbb{R}^n$ as follows:
\begin{equation*}\label{Polyhedral-Set}
P(\bm a,I,J,K):=\big\{\bm x\in\mathbb{R}^n\mid U_I\bm x= \bm a_I,\;U_J\bm x\le \bm a_J,\;U_K\bm x\ge \bm a_K\big\}.
\end{equation*}
\begin{definition}\label{Deformation-Polyhedra}
{\rm
Suppose $P(\bm a,I,J,K)$ is nonempty. A nonempty convex polyhedron $P$ is an {\em extended deformation} of $P(\bm a,I,J,K)$ if $P$ can be written as $P=P(\bm b,I,J,K)$ for some $\bm b\in\mathbb{R}^m$. In this sense, the vector $\bm b$ is called an {\em extended deforming vector} for $P$.
}
\end{definition}

Clearly each extended deformation of a convex polyhedron is parameterized by the corresponding extended deforming vector, which is not necessarily unique as illustrated in \autoref{Example1}. However, our definition provides a optimal and particularly precise framework for characterizing the normal, combinatorial and sign equivalence classes in the three deformations of real hyperplane arrangements as discussed in \autoref{Sec-4}.

In addition, the deformations of a polytope can equivalently be described by normal fans. Namely, a polytope $Q$ is a deformation of a polytope $P$ if the normal fan $\mathcal{N}(Q)$ is a coarsening of the normal fan $\mathcal{N}(P)$ in \cite[Proposition 2.6]{Castillo-Liu2022}. To study the deformation cone of a Coxeter permutohedron, the authors in \cite{Ardila2020} extended the concept of deformation from polytopes to convex polyhedra, and also introduced  `extended deformation' of  convex polyhedra. Specifically, a convex polyhedron $Q$ is an `extended deformation' of a convex polyhedron $P$ if  the normal fan $\mathcal{N}(Q)$ coarsens a convex subfan of $\mathcal{N}(P)$ in \cite[Definition 2.3]{Ardila2020}. As stated in \autoref{Example1}, the two mentioned definitions of the extended deformation of convex polyhedra are not completely equivalent. According to our definition, if $Q$ is an extended deformation of a convex polyhedron $P$, then the normal fan $\mathcal{N}(Q)$  either coarsens a convex subfan of the normal fan $\mathcal{N}(P)$ or a convex subfan of $\mathcal{N}(Q)$ is refined by $\mathcal{N}(P)$.

Two convex polyhedra $P$ and $Q$ are said to be {\em normally equivalent} ({\em combinatorially equivalent}, resp.)  if they have the same normal fan (their face posets are isomorphic, resp.). Another terms that are sometimes used instead of `normally equivalent' are `analogous' \cite{Alexandrov1937} that is a very important technique for proving the Alexandrov–Fenchel inequality or `strongly isomorphic' and `strongly combinatorially equivalent' \cite{McMullen1973, McMullen1996}. From this perspective, the interior of the deformation cone of a polytope $P$ precisely corresponds to a special class of  normally equivalent polytopes consisting of those deformations with the same normal fan as $P$, referred to as the type cone \cite{McMullen1973}. In general, \cite[Theorem 2.4]{Poullot2024} implied that deforming vectors in each open face of the height deformation cone exactly determine a class of  normally equivalent polytopes, where these polytopes share the same normal fan. This phenomenon triggers the following result, showing that the convex polyhedra  parameterized by the same open face of the derived arrangement $\delta\mathcal{A}_{\bm o}$ are normally and combinatorially equivalent. The derived arrangement  $\delta\mathcal{A}_{\bm o}$ is defined in \eqref{derived-arrangement}.
\begin{theorem}\label{Normally-Combinatorially}
Let $F$ be a face of $\delta\mathcal{A}_{\bm o}$ and $\bm a,\bm b\in\relint(F)$. Then for any partition $I,J,K$ of $[m]$,
$P(\bm a,I,J,K)$ and $P(\bm b,I,J,K)$ are normally and combinatorially equivalent.
\end{theorem}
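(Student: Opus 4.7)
The plan is to prove the stronger normal-equivalence statement; combinatorial equivalence then follows automatically. For any $\bm c\in\mathbb{R}^m$ and any $S\supseteq I$, set
\[
F_S(\bm c):=\big\{\bm x\in P(\bm c,I,J,K):U_S\bm x=\bm c_S\big\}.
\]
Every nonempty face of $P(\bm c,I,J,K)$ is of this form, and its outer normal cone is generated by $\{\bm u_j:j\in J\cap S\}\cup\{-\bm u_j:j\in K\cap S\}\cup\{\pm\bm u_j:j\in I\}$, which depends only on the pair $\bigl(S,(I,J,K)\bigr)$ and \emph{not} on $\bm c$. Consequently, to establish normal equivalence it suffices to show that as $\bm c$ ranges over $\relint(F)$, the collection of subsets $S$ for which $F_S(\bm c)$ is a nonempty face of $P(\bm c,I,J,K)$, together with $\dim F_S(\bm c)$, remains unchanged.

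The key technical step is a Farkas-type characterization of nonemptiness: $F_S(\bm c)\neq\emptyset$ if and only if for every $\bm\xi\in\mathbb{R}^m$ satisfying $\ssum_{i=1}^m\xi_i\bm u_i=\bm 0$ together with the sign constraints $\xi_j\ge 0$ for $j\in J\setminus S$ and $\xi_j\le 0$ for $j\in K\setminus S$, the linear inequality $\ssum_{i=1}^m\xi_ic_i\ge 0$ holds. Each such inequality is governed by the sign of a single linear functional $\bm c\mapsto\ssum\xi_ic_i$, whose zero locus is a hyperplane in $\mathbb{R}^m$. By inspection of \eqref{derived-arrangement}, every such hyperplane appears as a member of $\delta\mathcal{A}_{\bm o}$, because $\bm\xi$ arises from a linear dependence among $\bm u_1,\ldots,\bm u_m$ (equivalently a circuit of the matroid carried by $U$). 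Hence all the required sign conditions are constant on each open face of $\delta\mathcal{A}_{\bm o}$, so $F_S(\bm c)$ is simultaneously (non)empty for every $\bm c\in\relint(F)$.

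An analogous Farkas argument detects the unique maximal tight set $\widehat S(\bm c)\supseteq S$ on which $F_S(\bm c)$ actually lives; one has $\dim F_S(\bm c)=n-\rank U_{\widehat S(\bm c)}$, and the rank is independent of $\bm c$. An index $j\notin S$ belongs to $\widehat S(\bm c)$ exactly when a Farkas certificate $\ssum\xi_ic_i$ vanishes (i.e.\ $\bm c$ lies on the corresponding hyperplane of $\delta\mathcal{A}_{\bm o}$), so $\widehat S(\bm c)$ is also constant on $\relint(F)$. Combining both observations, the face posets and the outer normal cones of $P(\bm a,I,J,K)$ and $P(\bm b,I,J,K)$ agree, yielding normal, and hence combinatorial, equivalence.

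The main obstacle I anticipate is the bookkeeping in aligning the Farkas certificates with the explicit list of hyperplanes defining $\delta\mathcal{A}_{\bm o}$: one has to verify that every sign condition produced by Farkas truly corresponds to a hyperplane in $\delta\mathcal{A}_{\bm o}$ (including any coordinate-type hyperplanes $a_i=0$ that arise when $\bm u_i$ appears trivially in a dependence), and conversely that the Farkas conditions cover every hyperplane one would need. Once this matching is in place, the proof reduces to the elementary fact that every linear functional whose zero locus is a hyperplane of $\delta\mathcal{A}_{\bm o}$ has constant sign on $\relint(F)$.
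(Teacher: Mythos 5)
Your overall strategy lines up with the paper's: characterize the faces by their active (tight) index sets, compute normal cones from those sets alone, and use a Farkas argument to show that nonemptiness and the tight sets are constant as $\bm c$ varies over $\relint(F)$. This is also the skeleton of the paper's proof, which is split across Lemma~\ref{Nonempty}, Lemma~\ref{Face-Characterization}, Lemma~\ref{Face-Bijection}, and Lemma~\ref{Normal-Cone1}.

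However, the claim that ``every such hyperplane appears as a member of $\delta\mathcal{A}_{\bm o}$, because $\bm\xi$ arises from a linear dependence among $\bm u_1,\ldots,\bm u_m$ (equivalently a circuit of the matroid)'' is not correct, and the parenthetical ``equivalently'' is precisely where the proof breaks. A linear dependence vector $\bm\xi$ need not have minimal support, hence need not be (a multiple of) a circuit vector; for a general $\bm\xi$ in the left kernel of $U$, the hyperplane $\{\bm c:\langle\bm\xi,\bm c\rangle=0\}$ is usually \emph{not} in $\delta\mathcal{A}_{\bm o}$, so the sign of $\langle\bm\xi,\bm c\rangle$ need not a priori be constant on an open face of the derived arrangement. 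You flag this yourself as the ``main obstacle,'' but you do not resolve it, and the ``once this matching is in place'' step is exactly where the real work of the proof lies.

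The paper closes this gap in the proof of Lemma~\ref{Nonempty} (and implicitly again in Lemma~\ref{Face-Bijection}): given a Farkas certificate $\bm y$ with $\bm y U=\bm0$, $\bm y_J\ge\bm0$, $\bm y_K\le\bm0$, $\langle\bm y,\bm b\rangle<0$, it repeatedly subtracts multiples of circuit vectors supported inside $\supp(\bm y)$, in a way that shrinks the support while preserving the sign constraints and the strict inequality, until $\bm y$ itself becomes a circuit vector. Only after this reduction can one invoke ``constant sign on each open face of $\delta\mathcal{A}_{\bm o}$.'' Equivalently, one must argue that the sign-constrained certificate cone $\{\bm\xi:\bm\xi U=\bm0,\;\xi_j\ge0\ (j\in J\setminus S),\;\xi_k\le0\ (k\in K\setminus S)\}$ has its extreme rays spanned by circuit vectors (a conformal-decomposition fact for elementary vectors), so it suffices to test the finitely many circuit certificates. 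Without that reduction, your argument establishes neither the constancy of nonemptiness nor the constancy of the maximal tight set $\widehat S(\bm c)$, so the proof is incomplete at the critical step.
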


Note that a face in a real hyperplane arrangement is a convex polyhedron. So the term `normally equivalent' is naturally extended to real hyperplane arrangements.
Roughly speaking, two hyperplane arrangements $\mathcal{A}$ and $\mathcal{A}'$ in $\mathbb{R}^n$ are {\em normally equivalent} if there is an order-preserving bijection between their faces such that each corresponding face pair shares the same normal fan. We say that two real hyperplane arrangements $\mathcal{A}$ and $\mathcal{A}'$ are {\em sign equivalent} ({\em combinatorially equivalent}, {\em semi-lattice equivalent}, resp.) if their faces have the same signs (their face posets are isomorphic, their semi-lattices are isomorphic, resp.). Detailed definitions can be found in \autoref{Equivalence-Arrangement}. The result in \eqref{Four-Relations} quickly tells us that the normal and sign equivalences are stronger than the combinatorial and semi-lattice equivalences for real hyperplane arrangements.

Our other main goal is to address classification problems in the three deformations of real hyperplane arrangements: parallel translation, coning and  elementary lift. We always regard $\mathcal{A}_{\bm o}$ as the linear arrangement
\[
\mathcal{A}_{\bm o}:=\big\{H_{\bm u_i}:\langle\bm u_i,\bm x\rangle=0\mid i=1,2,\ldots,m\big\}
\]
in $\mathbb{R}^n$ that may be a multi-arrangement. For any $\bm a=(a_1,\ldots,a_m)\in\mathbb{R}^m$, we say that
\begin{itemize}
\item a {\em parallel translation} $\mathcal{A}_{\bm a}$ of $\mathcal{A}_{\bm o}$ is a hyperplane arrangement  in $\mathbb{R}^n$ defined as
\[
\mathcal{A}_{\bm a}:=\big\{H_{\bm u_i,a_i}:\langle\bm u_i,\bm x\rangle=a_i\mid  i=1,2,\ldots,m\big\};
\]
\item a {\em coning} $c\mathcal{A}_{\bm a}$ of $\mathcal{A}_{\bm a}$ is a linear arrangement in $\mathbb{R}^{n+1}$  given by
\[
c\mathcal{A}_{\bm a}:=\big\{ c H_{\bm u_i,a_i}:\bm \langle\bm u_i,\bm x\rangle+a_i\cdot x_{n+1}=0\mid  i=1,2,\ldots,m\big\}\Sqcup\{K_0: x_{n+1}=0\};
\]
\item an {\em elementary lift} $\mathcal{A}^{\bm a}$ of $\mathcal{A}_{\bm o}$ is a linear arrangement in  $\mathbb{R}^{n+1}$ defined to be
\[
\mathcal{A}^{\bm a}:=\big\{H_{(\bm u_i,a_i)}:\langle\bm u_i,\bm x\rangle+a_i\cdot x_{n+1}=0\mid  i=1,2,\ldots,m\big\}.
\]
\end{itemize}

Recently, Chen, Fu and Wang in \cite{Fu-Wang2024,Chen-Fu-Wang2021} provided a full description of the semi-lattice equivalence classes in the three deformations by the derived arrangement $\delta\mathcal{A}_{\bm o}$. Here we uniformly characterize the normal or combinatorial classes in the corresponding deformations using the same geometric structure $\delta\mathcal{A}_{\bm o}$.

\begin{theorem}\label{Three-Type-Normal-Combinatorial}
Let $F$ be a face of $\delta\mathcal{A}_{\bm o}$ and $\bm a,\bm b\in\relint(F)$. Then
\begin{itemize}
\item [\i] the parallel translations $\mathcal{A}_{\bm a}$ and $\mathcal{A}_{\bm b}$ are normally and combinatorially equivalent;
\item [\ii] the conings $c\mathcal{A}_{\bm a}$ and $c\mathcal{A}_{\bm b}$ are combinatorially equivalent;
\item [\iii] the elementary lifts $\mathcal{A}^{\bm a}$ and $\mathcal{A}^{\bm b}$ are combinatorially equivalent.
\end{itemize}
\end{theorem}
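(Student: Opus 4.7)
The plan is to reduce every part of the theorem to \autoref{Normally-Combinatorially} by exploiting the standard fact that the face poset of any real hyperplane arrangement is encoded by its set of realized sign vectors under the refinement order: a sign vector $\sigma$ refines $\tau$ iff $\sigma_i=\tau_i$ whenever $\sigma_i\neq 0$. Consequently, two arrangements on a common index set realizing identical sets of sign vectors are automatically combinatorially equivalent via the identity map on sign vectors. Throughout, I would match realized sign vectors, with a sharper face-by-face comparison needed to upgrade (i) to the stronger normal-equivalence statement.

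For part (i), the faces of $\mathcal{A}_{\bm a}$ are precisely the nonempty tetrad polyhedra $P(\bm a, I, J, K)$ as $(I, J, K)$ runs over the ordered partitions of $[m]$, the sign vector taking values $0,-,+$ on the blocks $I, J, K$ respectively. I would apply \autoref{Normally-Combinatorially} to each such partition: $P(\bm a, I, J, K)$ and $P(\bm b, I, J, K)$ are then simultaneously empty or nonempty and, in the nonempty case, share the same normal fan and the same face poset. Gluing these partition-wise equivalences produces the face-preserving, normal-fan-preserving bijection that witnesses normal (hence combinatorial) equivalence of $\mathcal{A}_{\bm a}$ and $\mathcal{A}_{\bm b}$.

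For parts (ii) and (iii), the hyperplane normals $(\bm u_i, a_i)\in\mathbb{R}^{n+1}$ now depend on $\bm a$, so \autoref{Normally-Combinatorially} cannot be invoked in the ambient space directly; the plan is to reduce back to (i) by slicing in the coordinate $x_{n+1}$. For the coning, I would label each face of $c\mathcal{A}_{\bm a}$ by $(\sigma, \sigma_0)\in\{0,+,-\}^{m+1}$, where $\sigma_i$ is the sign of $\langle \bm u_i, \bm x\rangle + a_i x_{n+1}$ and $\sigma_0$ is the sign of $x_{n+1}$, and case on $\sigma_0$: the three cases $\sigma_0\in\{0,+,-\}$ correspond, after scaling to $x_{n+1}\in\{0,1,-1\}$, to $\sigma$ being realized in the fixed $\mathcal{A}_{\bm o}$, in $\mathcal{A}_{-\bm a}$, and in $\mathcal{A}_{\bm a}$ respectively. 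Because $\delta\mathcal{A}_{\bm o}$ is a central arrangement, $-F$ is also a face of $\delta\mathcal{A}_{\bm o}$ with $-\bm a, -\bm b \in \relint(-F)$, so (i) applies to both pairs $(\bm a,\bm b)$ and $(-\bm a,-\bm b)$ and matches the realized sign vectors of $\mathcal{A}_{\pm\bm a}$ with those of $\mathcal{A}_{\pm\bm b}$. Combining the three cases yields equality of realized sign vectors for $c\mathcal{A}_{\bm a}$ and $c\mathcal{A}_{\bm b}$, proving (ii). Part (iii) is the same slicing without the $\sigma_0$-coordinate: a sign vector $\sigma\in\{0,+,-\}^m$ is realized in $\mathcal{A}^{\bm a}$ iff it is realized in at least one of $\mathcal{A}_{\bm o}$, $\mathcal{A}_{\bm a}$, $\mathcal{A}_{-\bm a}$, a union-condition invariant under $\bm a\mapsto\bm b$ by the same argument.

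The hardest step will be the bookkeeping in (ii)--(iii): one must check that the three slices $x_{n+1}\in\{0,1,-1\}$ genuinely exhaust every realized sign vector of the coning and of the elementary lift, and that the sign-flip occurring at $x_{n+1}<0$ matches exactly the antipodal involution $\bm a\mapsto -\bm a$ that is handled by the central symmetry of $\delta\mathcal{A}_{\bm o}$. Once this is in place, combinatorial equivalence drops out of (i); it is consistent with the theorem that only combinatorial (not normal) equivalence holds in (ii)--(iii), because the last coordinate of each new normal $(\bm u_i, a_i)$ varies with $\bm a$, so the normal fans of $c\mathcal{A}_{\bm a}$ and $c\mathcal{A}_{\bm b}$ (and of $\mathcal{A}^{\bm a}$ and $\mathcal{A}^{\bm b}$) need not coincide.
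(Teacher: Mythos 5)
Your architecture is essentially the paper's: for (ii) and (iii) you slice along $x_{n+1}\in\{0,1,-1\}$, match $\Sign(\mathcal{A}_{\pm\bm a})$ to $\Sign(\mathcal{A}_{\pm\bm b})$ using the central symmetry of the linear arrangement $\delta\mathcal{A}_{\bm o}$ (and the fact that $\Sign(\mathcal{A}_{\bm o})$ does not depend on the translation vector), then invoke the implication from sign equivalence to combinatorial equivalence. The paper packages exactly this content as \autoref{Combinatorial-Equivalent0} applied to (i), (iii), (iv) of \autoref{Three-Type-Sign}, and establishes sign equivalence of conings and elementary lifts via the explicit bijection $\epsilon$ between $\Sign(\mathcal{A}_{\bm a})$ and $\Sign^+(c\mathcal{A}_{\bm a})$, which amounts to your three-slice bookkeeping.

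However, your proof of (i) has a genuine gap. You identify the faces of $\mathcal{A}_{\bm a}$ with the nonempty tetrad polyhedra $P(\bm a,I,J,K)$, and this is false. A tetrad $(I,J,K)$ gives a face of $\mathcal{A}_{\bm a}$ with sign vector $0^I(-)^J(+)^K$ precisely when the \emph{open} polyhedron $\mathring{P}(\bm a,I,J,K)$ is nonempty, which is strictly stronger than $P(\bm a,I,J,K)\neq\emptyset$. For instance, with three concurrent lines in $\mathbb{R}^2$ the tetrad $(\{1\},\{2\},\{3\})$ may yield the single intersection point, a nonempty set whose realized sign vector is $(0,0,0)$, not $(0,-,+)$; so this tetrad does not index a face. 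Consequently your ``gluing'' does not produce a bijection between face posets, and \autoref{Normally-Combinatorially} as stated does not give you what you need here: it says nothing about preservation of $\mathring{P}(\cdot,I,J,K)\neq\emptyset$ as the deforming vector moves within $\relint(F)$. What is actually required is the preservation of the active index set of the whole polyhedron, which the paper isolates in \autoref{Nonempty-Open} and in the identity $I_{P(\bm b,I,J,K)}\big(\Phi(G)\big)=I_{P(\bm a,I,J,K)}(G)$ inside \autoref{Face-Bijection}. You also leave unchecked that the resulting bijection between face posets is order-preserving, which the paper verifies via the sign order (\autoref{Face-Sign-Order}). Both repairs are available, but as written the proof of (i) does not close, and (ii)--(iii) inherit the defect since they rest on matching $\Sign(\mathcal{A}_{\pm\bm a})$ with $\Sign(\mathcal{A}_{\pm\bm b})$, which is precisely the missing content.
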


Furthermore, we also show that every open face of the derived arrangement $\delta\mathcal{A}_{\bm o}$ corresponds precisely to a unique sign equivalence class of the parallel translations $\mathcal{A}_{\bm a}$, conings $c\mathcal{A}_{\bm a}$ and elementary lifts $\mathcal{A}^{\bm a}$.
\begin{theorem}
Let $\bm a,\bm b\in\mathbb{R}^m$.
Then the following are equivalent:
\begin{itemize}\label{Three-Type-Sign}
\item [\i]  $\bm a$ and $\bm b$ belong to the same open face of $\delta\mathcal{A}_{\bm o}$;
\item [\ii] the parallel translations $\mathcal{A}_{\bm a}$ and $\mathcal{A}_{\bm b}$ are sign equivalent;
\item [\iii] the conings $c\mathcal{A}_{\bm a}$ and $c\mathcal{A}_{\bm b}$ are sign equivalent;
\item [\iv] the elementary lifts $\mathcal{A}^{\bm a}$ and $\mathcal{A}^{\bm b}$ are sign equivalent.
\end{itemize}
\end{theorem}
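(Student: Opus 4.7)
The plan is to prove a cycle of equivalences with (i) as the hub. The guiding principle is that sign equivalence of two arrangements indexed by the same set $[m]$ amounts to the following realizability condition: for every partition $(I,J,K)$ of $[m]$, the sign vector encoded by $(I,J,K)$ is realized by $\mathcal{A}_{\bm a}$ if and only if it is realized by $\mathcal{A}_{\bm b}$. In the notation of \autoref{Deformation-Polyhedra}, this is precisely the statement that $\relint(P(\bm a,I,J,K))$ and $\relint(P(\bm b,I,J,K))$ are simultaneously nonempty.

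For (i) $\Rightarrow$ (ii), fix a face $F$ of $\delta\mathcal{A}_{\bm o}$ and $\bm a,\bm b\in\relint(F)$. For every partition $(I,J,K)$, \autoref{Normally-Combinatorially} tells us that once both $P(\bm a,I,J,K)$ and $P(\bm b,I,J,K)$ are nonempty, they are normally equivalent, hence of the same dimension, so their relative interiors are simultaneously nonempty. What remains is to show that bare nonemptiness is itself an invariant of $\relint(F)$. I would argue this directly from the construction of $\delta\mathcal{A}_{\bm o}$: its hyperplanes are indexed by the signed circuits (equivalently cocircuits) of $U$, and by a Farkas-type alternative the nonemptiness of $P(\bm a,I,J,K)$ is governed by the signs of the linear forms $\bm c\cdot\bm a$ where $\bm c$ ranges over these circuits. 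Since these signs are constant on each open face of $\delta\mathcal{A}_{\bm o}$, bare nonemptiness is as well, yielding (i) $\Rightarrow$ (ii).

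The implications (i) $\Rightarrow$ (iii) and (i) $\Rightarrow$ (iv) then follow by running the same argument on the augmented vector systems $\{(\bm u_i,a_i)\}_{i\in[m]}$, together with the extra coordinate vector for the coning. The key compatibility to verify is that, as $\bm a$ varies, the circuits of these augmented families are in natural bijection with the circuits defining $\delta\mathcal{A}_{\bm o}$, so no new walls appear in the parameter space $\mathbb{R}^m$; this should be a direct computation from the definitions of $c\mathcal{A}_{\bm a}$ and $\mathcal{A}^{\bm a}$. One can alternatively derive (iii) $\Leftrightarrow$ (iv) in one step from the observation that $c\mathcal{A}_{\bm a}=\mathcal{A}^{\bm a}\sqcup\{K_0\}$ with $K_0$ independent of $\bm a$, so the sign vectors of the coning differ from those of the elementary lift only by the fixed coordinate for $K_0$.

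For the converses, I would prove (ii) $\Rightarrow$ (i), and then reuse the argument for (iii) $\Rightarrow$ (i) and (iv) $\Rightarrow$ (i), by contraposition. If $\bm a$ and $\bm b$ lie in different open faces of $\delta\mathcal{A}_{\bm o}$, some defining hyperplane $H$ of $\delta\mathcal{A}_{\bm o}$ takes different signs on $\bm a$ and $\bm b$. Writing $H$ in terms of a signed circuit $(C^+,C^-)$ of $U$, I would extract a partition $(I,J,K)$ whose realizability differs between $\mathcal{A}_{\bm a}$ and $\mathcal{A}_{\bm b}$, producing a sign vector witnessing the failure of sign equivalence. The main obstacle, and the technical heart of the argument, is precisely this last step of producing a witness: it leans on the oriented-matroid duality that the circuits of $U$ parameterize the walls of $\delta\mathcal{A}_{\bm o}$ while the cocircuits of $U$ govern the realizable sign vectors of $\mathcal{A}_{\bm a}$, so that crossing a single wall is exactly what creates or destroys one realizable sign vector. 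I would rely on this duality rather than a brute case analysis, and the same reasoning, applied to the augmented circuit systems, handles the closure of the cycle.
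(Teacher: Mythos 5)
Your proposal captures the correct high-level architecture (reduce sign equivalence to simultaneous realizability of sign vectors, use the derived arrangement's walls to detect loss of realizability), but there are two genuine gaps.

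First, the reduction at the start is not quite right. You identify realizability of the sign vector encoded by $(I,J,K)$ with nonemptiness of $\relint\big(P(\bm a,I,J,K)\big)$, and then argue via \autoref{Normally-Combinatorially} that normal equivalence implies equal dimension, hence ``relative interiors simultaneously nonempty.'' But the relative interior of a nonempty convex set is always nonempty, so that conclusion is vacuous; what actually governs realizability of $(I,J,K)$ is nonemptiness of the \emph{open} polyhedron $\mathring{P}(\bm a,I,J,K)$ defined by the strict inequalities, and this is genuinely stronger. (For instance, if some $\bm u_{j_0}$ with $j_0\in J$ lies in $\Span\{\bm u_i : i\in I\}$ and the corresponding circuit equation vanishes at $\bm a$, then $P(\bm a,I,J,K)$ can be nonempty of the expected dimension while $\mathring{P}(\bm a,I,J,K)=\emptyset$.) Equal dimension alone therefore does not suffice. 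The paper closes this gap with \autoref{Nonempty-Open}, which is the open-polyhedron analogue of \autoref{Nonempty} and is proved by the same Farkas argument; alternatively one can appeal to the active-triple-index preservation in \autoref{Face-Bijection}, which is stronger than mere dimension equality.

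Second, the plan to prove (i) $\Rightarrow$ (iii) and (i) $\Rightarrow$ (iv) by ``running the same argument on the augmented vector systems $\{(\bm u_i,a_i)\}$'' does not go through. As $\bm a$ varies, the hyperplanes $cH_{\bm u_i,a_i}=H_{(\bm u_i,a_i)}$ in $\mathbb{R}^{n+1}$ \emph{rotate} rather than translate, so the conings and elementary lifts are not parallel translations of any fixed linear arrangement, and the machinery you have (which is specifically about the family $\mathcal{A}_{\bm a}$ as a translated deformation) does not directly apply. The paper instead establishes explicit bijective correspondences showing that $\Sign(\mathcal{A}_{\bm a})$ and $\Sign(c\mathcal{A}_{\bm a})$ determine each other (via the map $\kappa$ to $\mathcal{F}^+(c\mathcal{A}_{\bm a})$ together with opposite faces and the equator), and similarly that $\Sign(\mathcal{A}^{\bm a})$ is obtained from $\Sign(c\mathcal{A}_{\bm a})$ by forgetting the $K_0$ coordinate; this reduces (iii) and (iv) to (ii). Your observation that (iii) $\Leftrightarrow$ (iv) because $c\mathcal{A}_{\bm a}=\mathcal{A}^{\bm a}\sqcup\{K_0\}$ is correct and is essentially the second of these correspondences, but you still need the first one to connect back to (ii) or (i).

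On the converse (ii) $\Rightarrow$ (i), you correctly identify the construction of a witnessing sign vector from a separating wall of $\delta\mathcal{A}_{\bm o}$ as the technical heart, and the paper's \autoref{Separate-Cells} does exactly this via a Farkas argument on the circuit restriction; your sketch is pointed in the right direction but is not yet a proof.
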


Notice that the derived arrangement $\delta\mathcal{A}_{\bm o}$ decomposes the ambient space $\mathbb{R}^m$ into finitely many open faces, that is,
\begin{equation}\label{Whole-Space-Partition}
\mathbb{R}^m=\bigsqcup_{F\in \mathcal{F}(\delta\mathcal{A}_{\bm o})} \relint(F),
\end{equation}
where $\mathcal{F}(\delta\mathcal{A}_{\bm o})$ is the face poset of $\delta\mathcal{A}_{\bm o}$ and $\sqcup$ denotes disjoint union. Then \autoref{Three-Type-Normal-Combinatorial} and \autoref{Three-Type-Sign} show that each of the configuration spaces of  parallel translation $\mathcal{A}_{\bm a}$, coning $c\mathcal{A}_{\bm a}$ and elementary lift $\mathcal{A}^{\bm a}$ parameterized by $\bm a\in\mathbb{R}^m$, is partitioned into at most $|\mathcal{F}(\delta\mathcal{A}_{\bm o})|$ equivalence classes based on any one of the combinatorial, sign or normal equivalences.

Additionally, an (affine) oriented matroid, as described in \cite{Bjorner1999}, uses the covector axiomatization to model the combinatorial structure of sign vectors of real hyperplane arrangements. This allows us to  address analogous questions in (affine) oriented matroids in \autoref{Sec-5}.

The derived arrangement is a key tool in our study, originating from Crapo's introduction of the `geometry of circuits' to characterize the matroid $M(m,n,\mathcal{C})$ of circuits of the configuration $\mathcal{C}$ of $m$ generic points in $\mathbb{R}^n$. Subsequently, Manin-Schechtman \cite{Manin-Schechtman1989} defined the same object as higher braid arrangements, known as `discriminantal arrangement'. Most recently, the authors in \cite{Chen-Fu-Wang2021} introduced derived arrangements as a geometric model of derived matroids, and further explained that the derived arrangement can be viewed as a generalization of the discriminant arrangement.  The concept of derived matroid was first proposed by Rota \cite{Rota1971}, and explicitly defined for binary matroids by Longyear \cite{Longyear1980} and for general representable matroids by Oxley-Wang \cite{Oxley-Wang2019}.  In \autoref{Sec-6}, we also achieve several alternative characterizations for derived arrangements associated with faces and sign vectors of real arrangements.
 \paragraph{Organization.}
\autoref{Sec-2} sets necessary foundations on convex polyhedra and hyperplane arrangements. In \autoref{Sec-3}, we study extended deformations of convex polyhedra, and prove \autoref{Normally-Combinatorially}.  \autoref{Sec-4} is devoted to showing \autoref{Three-Type-Normal-Combinatorial} and \autoref{Three-Type-Sign}, and also establishes the relationships among the combinatorial, semi-lattice, normal and sign equivalence relations on real arrangements.  \autoref{Sec-5} further addresses similar questions in (affine) oriented matroids depending on \autoref{Sec-4}. Finally, we define the sign operator and face operator associated with the faces and sign vectors of real arrangements, and use them to provide several alternative characterizations of real derived arrangements in \autoref{Sec-6}.

\section{Preliminaries}\label{Sec-2}
In this section, we review necessary notations and definitions on convex polyhedra and real hyperplane arrangements.
\paragraph{2.1. Notations.} Here we shall fix our notations. For any non-zero vector $\bm u\in\mathbb{R}^n$ and real number $a\in\mathbb{R}$, notations $H_{\bm u}, H_{\bm u,a}, H_{\bm u,a}^-$, $H_{\bm u,a}^+$, $\mathring{H}_{\bm u,a}^-$ and $\mathring{H}_{\bm u,a}^+$  refer to a linear hyperplane
\[
H_{\bm u}:=\{\bm x\in\mathbb{R}^n:\langle\bm u,\bm x\rangle=0\},
\]
an affine hyperplane
\[
H_{\bm u,a}:=\{\bm x\in\mathbb{R}^n:\langle\bm u,\bm x\rangle=a\},
\]
the closed half-spaces
\[
H_{\bm u,a}^-:=\{\bm x\in\mathbb{R}^n:\langle\bm u,\bm x\rangle\le a\},\quad H_{\bm u,a}^+:=\{\bm x\in\mathbb{R}^n:\langle\bm u,\bm x\rangle\ge a\}\]
and the open half-spaces
\[
\mathring{H}_{\bm u,a}^-:=\{\bm x\in\mathbb{R}^n:\langle\bm u,\bm x\rangle<a\},\quad\mathring{H}_{\bm u,a}^+:=\{\bm x\in\mathbb{R}^n:\langle\bm u,\bm x\rangle>a\}
\]
in turn, where $\langle\cdot,\cdot\rangle$ denotes the standard scalar product in $\mathbb{R}^n$. For any $\bm x=(x_1,\ldots,x_m)\in\mathbb{R}^m$, we denote by $\supp(\bm x)$  the support of $\bm x$ given by
\[
\supp(\bm x):=\big\{i\in[m]:x_i\ne0\big\},
\]
where $[m]:=\{1,2,\ldots,m\}$. The relative interior of a subset $A$ of a topological space is denoted by $\relint(A)$. For convenience, we always assume $\bm a=(a_1,\ldots,a_m)$ and $\bm b=(b_1,\ldots,b_m)$ for any $\bm a,\bm b\in\mathbb{R}^m$.

\paragraph{2.2. Convex polyhedra.}
Terminology on convex polyhedra mainly refers to books \cite{Ewald1996,Schneider2014}. A {\em convex polyhedron} $P$ in $\mathbb{R}^n$ is an intersection of finitely many closed half-spaces. Particularly, if these closed half-spaces pass all through the origin $\bm o$, then the convex polyhedron $P$ is called a {\em polyhedral cone}. A bounded convex polyhedron is called a {\em polytope}. A hyperplane $H$ in $\mathbb{R}^n$ is called a {\em supporting hyperplane} of $P$ if $P\cap H\ne \emptyset$ and $P$ is contained in one of the closed half-spaces $H^-$ and $H^+$. In this case, we call the set $P\cap H$ a {\em face} and $\relint(P\cap H)$ an {\em open face} of $P$ respectively. By convention, $\emptyset$ and $P$ are {\em improper faces} of $P$.  Faces of convex polyhedra form an important poset. A {\em face poset} $\mathcal{F}(P)$ of $P$ is a set of all faces, partially ordered by inclusion. A member $F$ in $\mathcal{F}(P)$ is called a {\em $k$-face} if
\[
\dim(F):=\dim\big(\aff(P)\big)=k,
\]
where $\aff(P)$ denotes the minimum affine subspace containing $P$ in $\mathbb{R}^n$. The $(\dim(P)-1)$-faces of $P$ are called {\em facets}. By convention, $\dim(\emptyset)=-1$.  

A {\em support function} $h_P(\cdot):\mathbb{R}^n\to(-\infty,\infty]$ of $P$ is defined as
\[
h_P(\bm u):=\sup\big\{\langle\bm u,\bm x\rangle:\bm x\in P\big\},\quad \bm u\in\mathbb{R}^n.
\]
By convention, $H_{\bm 0}:=\mathbb{R}^n$. Let $F(P,\bm u):=H_{\bm u,h_P(\bm u)}\cap P$. If $h_P(\bm u)=\infty$, both sets $H_{\bm u,h_P(\bm u)}$ and $F(P,\bm u)$ are empty. Otherwise, the hyperplane $H_{\bm u,h_P(\bm u)}$ is a supporting hyperplane of $P$. In this sense, $F(P,\bm u)$ is exactly the corresponding face of $P$.

Let $P$ be a convex polyhedron in $\mathbb{R}^n$. The {\em normal cone} $N_P(\bm x)$ of $P$ at $\bm x$ is defined by
\[
N_P(\bm x):=\big\{\bm u\in\mathbb{R}^n\mid \bm x\in H_{\bm u,h_P(\bm u)}\big\}.
\]
When $\bm x$ and $\bm y$ are two relative interior points of a face $F\in\mathcal{F}(P)$,  clearly $N_P(\bm x)=N_P(\bm y)$. For this reason, the {\em normal cone}  $N_P(F)$ of $P$ at a nonempty face $F$ is defined as
\[
N_P(F):=N_P(\bm x)
\]
for some $\bm x\in\relint(F)$. The {\em normal fan} $\mathcal{N}(P)$ of $P$ is the collection of its normal cones, i.e.,
\[
\mathcal{N}(P):=\big\{N_P(F)\mid F\mbox{ is a nonempty face of } P\big\}.
\]
The {\em support} of $\mathcal{N}(P)$ is the union of normal cones in $\mathcal{N}(P)$, denoted by $|\mathcal{N}(P)|$.

We now introduce the normal and combinatorial equivalence relations on convex polyhedra, which will be discussed later.
\begin{definition}\label{Two-Equivalence-Polyhedra}
{\rm Two convex polyhedra $P$ and $Q$ in $\mathbb{R}^n$ are said to be:
\begin{itemize}
  \item {\em combinatorially equivalent} if their face posets are isomorphic, that is $\mathcal{F}(P)\cong\mathcal{F}(Q)$;
  \item {\em normally equivalent} if they have the same normal fan, that is $\mathcal{N}(P)=\mathcal{N}(Q)$.
\end{itemize}
}
\end{definition}
\paragraph{2.3. Real hyperplane arrangements.}
Terminology on hyperplane arrangements mainly refers to the literature \cite{Stanley}. A {\em real hyperplane arrangement} $\mathcal{A}$ is a finite collection of hyperplanes in an $n$-dimensional Euclidean space  $\mathbb{R}^n$. When the intersection of all hyperplanes in $\mathcal{A}$ is nonempty, $\mathcal{A}$ is called a {\em central arrangement} and the intersection is referred to as a {\em center}. In particular, $\mathcal{A}$ is {\em linear} if the center contains the origin $\bm o$.  For any $X\in L(\mathcal{A})$, the {\em localization} of $\mathcal{A}$ at $X$ is the subarrangement
\[
\mathcal{A}_X:=\{H\in\mathcal{A}\mid X\subseteq H\},
\]
and the {\em restriction} $\mathcal{A}/X$ of $\mathcal{A}$ on $X$ is a hyperplane arrangement in $X$ defined by
\[
\mathcal{A}/X:=\{H\cap X\ne\emptyset\mid  H\in \mathcal{A}\setminus\mathcal{A}_X\}.
\]

There are two important posets related to real hyperplane arrangements: intersection poset (semi-lattice) and face poset. The {\em intersection poset} or {\em  semi-lattice} $L(\mathcal{A})$ consists of all nonempty intersections of some hyperplanes in $\mathcal{A}$, ordered by the reverse inclusion, and including $\mathbb{R}^n:=\bigcap_{H\in\emptyset}H$ as the minimum element. In general, $L(\mathcal{A})$ is a semi-lattice, and it is a lattice if and only if $\mathcal{A}$ is central. The {\em complement} of $\mathcal{A}$ is defined as
\[
M(\mathcal{A}):=\mathbb{R}^n\setminus\bigcup_{H\in \mathcal{A}}H.
\]
It consists of finitely many connected components, called {\em regions} of $\mathcal{A}$. More generally, for any $k$-dimensional element $X$ in $L(\mathcal{A})$, each region of $\mathcal{A}/X$ is a {\em $k$-dimensional open face} of $\mathcal{A}$, and its closure is referred to as a {\em $k$-dimensional face} of $\mathcal{A}$. The {\em face poset} $\mathcal{F}(\mathcal{A})$ of $\mathcal{A}$ is the set of all faces, partially ordered by inclusion. It serves as a bridge between real hyperplane arrangements and (affine) oriented matroids, see \autoref{Sec-5} for details.

A hyperplane arrangement $\mathcal{A}=\{H_1,\ldots,H_m\}$ in $\mathbb{R}^n$ decomposes the ambient space $\mathbb{R}^n$ into a finite number of relatively open topological cells (open faces). Each open face is completely determined by its position information relative to every hyperplane $H_i$: it either lies on the hyperplane, lies on its positive side, or lies on its negative side. This leads us to associate a sign vector with every (open) face. For any point $\bm x \in\mathbb{R}^n$, a {\em sign vector} of $\bm x$ induced by $\mathcal{A}$ is defined as
\[
\Sign_{\mathcal{A}}(\bm x):=\big(\Sign_{\mathcal{A}}(\bm x)_1,\Sign_{\mathcal{A}}(\bm x)_2,\ldots,\Sign_{\mathcal{A}}(\bm x)_m\big),
\]
where the $i$-th component is determined as follows:
\begin{equation*}\label{Point-Sign}
\Sign_{\mathcal{A}}(\bm x)_i:=
\begin{cases}
   +, & \mbox{if }\quad\bm x\in \mathring{H}_i^+, \\
   0, & \mbox{if }\quad\bm x\in H_i,\\
   -, & \mbox{if }\quad\bm x\in \mathring{H}_i^-,
\end{cases}
\end{equation*}
for $1\le i\le m$. This sign vector encodes the position of $\bm x$ with respect to each hyperplane. Thus all points lying in an open face of $\mathcal{A}$ have the same sign vector. For this reason, we define a {\em sign vector} of a face $F\in\mathcal{F}(\mathcal{A})$ to be
\begin{equation}\label{Face-Point-Sign}
\Sign_{\mathcal{A}}(F):=\Sign_{\mathcal{A}}(\bm x)
\end{equation}
for some $\bm x\in\relint(F)$. Go one step further, we define a {\em sign} of $\mathcal{A}$ as
\[
\Sign(\mathcal{A}):=\big\{\Sign_{\mathcal{A}}(\bm x)\mid \bm x\in\mathbb{R}^n\big\}.
\]
Alternatively, $\Sign(\mathcal{A})$ can be represented as the following form
\begin{equation}\label{Arrangement-Sign}
\Sign(\mathcal{A})=\big\{\Sign_{\mathcal{A}}(F)\mid F\in\mathcal{F}(\mathcal{A})\big\}.
\end{equation}

We now define four types of equivalence relations on real hyperplane arrangements, which will be studied in \autoref{Sec-4}.
\begin{definition}\label{Equivalence-Arrangement}
{\rm
Let $\mathcal{A}=\{H_1,\ldots,H_m\}$ be a hyperplane arrangement in $\mathbb{R}^n$, and $\mathcal{A}'=\{H_1',\ldots,H_m'\}$ be a hyperplane arrangement in $\mathbb{R}^{n'}$. Then $\mathcal{A}$ and $\mathcal{A}'$ are said to be:
\begin{itemize}
\item {\em semi-lattice equivalent} if their semi-lattices are isomorphic, that is $L(\mathcal{A})\cong L(\mathcal{A}')$;
\item {\em combinatorially equivalent} if their face posets are isomorphic, that is $\mathcal{F}(\mathcal{A})\cong \mathcal{F}(\mathcal{A}')$;
\item {\em sign equivalent} if they have the same sign, that is $\Sign(\mathcal{A})=\Sign(\mathcal{A}')$.
\end{itemize}
When $n=n'$, $\mathcal{A}$ and $\mathcal{A}'$ are said to be:
\begin{itemize}
\item {\em normally equivalent} if there is an order-preserving bijection $\Psi$ from $\mathcal{F}(\mathcal{A})$ to $\mathcal{F}(\mathcal{A}')$ such that $F$ and $\Psi(F)$ are normally equivalent for any face $F$ of $\mathcal{A}$. In this sense, the map $\Psi$ is called a {\em normal map} from $\mathcal{A}$ to $\mathcal{A}'$.
\end{itemize}
}
\end{definition}

\paragraph{2.4. Derived arrangement.}
The derived arrangement is a key ingredient to obtain our results. Associated with the linear arrangement $\mathcal{A}_{\bm o}$, let
\begin{equation}\label{circuit}
\mathcal{C}(\mathcal{A}_{\bm o}):=\big\{C\subseteq[m]\mid\{\bm u_i\mid i\in C\}{\;\rm is\;a\;minimum\;linearly\;dependent\;set}\big\}.
\end{equation}
Each member of $\mathcal{C}(\mathcal{A}_{\bm o})$ is said to be a {\em circuit} of $[m]$ with respect to $\mathcal{A}_{\bm o}$. It is clear that for every $C\in\mathcal{C}(\mathcal{A}_{\bm o})$, there is a unique vector $\bm c^C=(c_1,c_2,\ldots,c_m)\in\mathbb{R}^m$ (up to a nonzero constant multiple) such that
\begin{equation}\label{circuit-vector}
\sum_{i=1}^{m}c_i\bm u_i=\bm0\quad \And\quad c_i\ne 0\Longleftrightarrow i\in C,
\end{equation}
called  a {\em circuit vector} of $C$. The {\em derived arrangement} $\delta\mathcal{A}_{\bm o}$ of $\mathcal{A}_{\bm o}$ is a linear arrangement in $\mathbb{R}^m$ defined as
\begin{equation}\label{derived-arrangement}
\delta\mathcal{A}_{\bm o}:=\big\{H_{\bm c^C}:\langle\bm c^C, \bm x\rangle=0\mid C\in \mathcal{C}(\mathcal{A}_{\bm o})\big\}.
\end{equation}
\section{Extended deformations of convex polyhedra}\label{Sec-3}
In this section, we show that all extended deformations of a convex polyhedron $P$ retain the same boundedness as $P$ in \autoref{Cone}, and also prove \autoref{Normally-Combinatorially}.
\subsection{Boundedness}\label{Sec-3-1}
We start by introducing Farkas' lemma which serves as a crucial tool for determining both the solvability of inequality systems and the boundedness of convex  polyhedra.
\begin{lemma}[\cite{Kuhn1956}, Farkas' lemma]\label{Farkas-lemma} Let $A$ be a $m\times n$ real matrix and $\bm a\in\mathbb{R}^m$. Then exactly one of the following statements is true
\begin{itemize}
  \item [{\rm1.}] there exists a vector $\bm x\in\mathbb{R}^n$ satisfying $A\bm x\le\bm a$,
  \item [{\rm2.}] there exists a row vector $\bm y\in\mathbb{R}_{\ge 0}^m$ satisfying $\bm yA=\bm 0$ and $\langle \bm y,\bm a\rangle<0$.
\end{itemize}
\end{lemma}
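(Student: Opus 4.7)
The plan is to prove Farkas' lemma via the standard route: strict separation of the point $\bm a$ from a closed convex cone built out of $A$. The easy half (``not both'' can hold) is an immediate calculation. If simultaneously $A\bm x\le\bm a$ and $\bm y\in\mathbb{R}_{\ge 0}^m$ with $\bm yA=\bm 0$, then $\bm a-A\bm x\ge\bm 0$ componentwise and $\bm y\ge\bm 0$, so $\bm y(\bm a-A\bm x)\ge 0$; but this expression equals $\langle\bm y,\bm a\rangle-\bm yA\bm x=\langle\bm y,\bm a\rangle$, contradicting $\langle\bm y,\bm a\rangle<0$.

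For the non-trivial half, assume statement $1$ fails, so $\bm a$ does not lie in
\[
C:=\big\{A\bm x+\bm s\mid \bm x\in\mathbb{R}^n,\ \bm s\in\mathbb{R}_{\ge 0}^m\big\}.
\]
This $C$ is a convex cone containing $\bm 0$, being the Minkowski sum of the linear subspace $A(\mathbb{R}^n)$ and the finitely generated cone $\mathbb{R}_{\ge 0}^m$. The crucial structural fact is that $C$ is \emph{closed}: the sum of two finitely generated convex cones is finitely generated, so by the Minkowski--Weyl theorem $C$ is polyhedral and hence closed.

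Now apply the strict separating hyperplane theorem to the closed convex set $C$ and the point $\bm a\notin C$: there exist $\bm y\in\mathbb{R}^m\setminus\{\bm 0\}$ and $\alpha\in\mathbb{R}$ with $\langle\bm y,\bm c\rangle\ge\alpha>\langle\bm y,\bm a\rangle$ for every $\bm c\in C$. Setting $\bm c=\bm 0\in C$ gives $\alpha\le 0$, so $\langle\bm y,\bm a\rangle<0$. Since $C$ is a cone, $\lambda\bm c\in C$ for every $\lambda>0$ and $\bm c\in C$, so $\lambda\langle\bm y,\bm c\rangle\ge\alpha$ for all $\lambda>0$; letting $\lambda\to\infty$ forces $\langle\bm y,\bm c\rangle\ge 0$ on all of $C$. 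Specialising to $\bm c=A\bm x$ (take $\bm s=\bm 0$) and letting $\bm x$ range over $\mathbb{R}^n$ yields $\bm yA=\bm 0$; specialising to $\bm c$ equal to a standard basis vector (take $\bm x=\bm 0$) yields $\bm y\ge\bm 0$ componentwise. This is exactly the certificate demanded by statement $2$.

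The principal obstacle is the closedness of $C$; without it the separation step delivers only $\langle\bm y,\bm a\rangle\le 0$, which is too weak to match the strict inequality in statement $2$. An alternative route that avoids this subtlety is Fourier--Motzkin elimination, which removes one variable at a time from the system $A\bm x\le\bm a$ and constructs the certificate $\bm y$ as an explicit nonnegative combination of the rows; it is more elementary but notationally heavier and supplies less geometric intuition than the separation argument sketched above.
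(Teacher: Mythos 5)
The paper cites Farkas' lemma from Kuhn (1956) as a known result and does not reproduce a proof, so there is no in-paper argument to compare yours against; the review below assesses your proof on its own terms.

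Your proof is correct and follows the standard separation route. You correctly reduce the failure of statement 1 to $\bm a\notin C$ where $C=A(\mathbb{R}^n)+\mathbb{R}_{\ge 0}^m$, and you correctly identify that the real content is the closedness of $C$, which you dispose of by observing that $C$ is finitely generated (the linear subspace $A(\mathbb{R}^n)$ is generated by $\pm$ the columns of $A$, and $\mathbb{R}_{\ge 0}^m$ by the standard basis) and invoking the Weyl half of Minkowski--Weyl. The downstream extraction of the certificate is clean: plugging $\bm c=\bm 0$ into the strict separation gives $\langle\bm y,\bm a\rangle<0$; positive homogeneity of $C$ upgrades $\langle\bm y,\cdot\rangle\ge\alpha$ on $C$ to $\langle\bm y,\cdot\rangle\ge 0$ on $C$; testing against $\pm A\bm x$ gives $\bm y A=\bm 0$; testing against the standard basis vectors gives $\bm y\ge\bm 0$. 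Each step is carried out properly and yields exactly statement 2. The one point worth making explicit for a fully self-contained write-up is circularity hygiene: the usual proofs that finitely generated cones are polyhedral (and hence closed) themselves pass through Fourier--Motzkin elimination or cone duality, so you should ensure the particular proof of Minkowski--Weyl you lean on does not quietly presuppose a Farkas-type alternative. Your closing remark that Fourier--Motzkin directly produces the certificate $\bm y$ as an explicit nonnegative combination of rows already acknowledges this and provides a clean, elementary fallback; alternatively, one can prove closedness of finitely generated cones directly via the conic Carath\'eodory theorem without invoking Minkowski--Weyl at all.
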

The following general Farkas' lemma for equality and (strict) inequality constraints can be derived from \autoref{Farkas-lemma}.
\begin{proposition}\label{General-Farkas-lemma}
Let $A_i$ be a $m_i\times n$  real matrix and $\bm a_i\in\mathbb{R}^{m_i}$ for $i=1,\ldots,5$.  Then exactly one of the following  statements is true
 \begin{itemize}
 \item[{\rm1.}] there exists a vector $\bm x\in\mathbb{R}^n$ satisfying
 \[
 A_1\bm x=\bm a_1,\quad A_2\bm x\le\bm a_2,\quad A_3\bm x<\bm a_3,\quad A_4\bm x\ge\bm a_4\quad\And\quad A_5\bm x>\bm a_5,
 \]
 \item[{\rm2.}] there exist row vectors $\bm y_i\in\mathbb{R}^{m_i}$ for $i=1,\ldots,5$ satisfying that
 \[
 \bm y_i\ge\bm 0\;(i=1,2),\quad\bm y_j\le\bm 0\;(j=4,5),\quad\sum_{i=1}^{5}\bm y_iA_i=\bm 0,
\]
and
\[
\sum_{i=1}^{5}\langle \bm y_i,\bm a_i\rangle<0\quad\Or\quad\sum_{i=1}^{5}\langle \bm y_i,\bm a_i\rangle=0\And \bm y_3\ne\bm 0 \Or \bm y_5\ne\bm 0.
\]
 \end{itemize}
\end{proposition}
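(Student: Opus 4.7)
The plan is to deduce this generalized alternative from the basic \autoref{Farkas-lemma} by three routine reductions, with the only subtle point being the extraction of the ``strict'' versus ``equality'' sub-cases at the end.

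First, I would dispose of mutual exclusivity directly. If both alternatives held, then pairing the feasible $\bm x$ with the multipliers gives
\[
0 \;=\; \Bigl(\sum_{i=1}^{5}\bm y_iA_i\Bigr)\bm x \;=\; \sum_{i=1}^{5}\langle\bm y_i,A_i\bm x\rangle;
\]
on the other hand, the sign conditions on the $\bm y_i$ together with the constraints on $A_i\bm x-\bm a_i$ force each term $\langle\bm y_i,A_i\bm x\rangle\le\langle\bm y_i,\bm a_i\rangle$, strictly for $i=3$ when $\bm y_3\ne\bm 0$ and for $i=5$ when $\bm y_5\ne\bm 0$. Either branch of alternative~2 then contradicts $0\le\sum_{i=1}^{5}\langle\bm y_i,\bm a_i\rangle$.

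For the existence direction, suppose alternative~1 is infeasible. I would first flip every $\ge$ and $>$ into $\le$ and $<$, absorbing the sign reversal into nonpositive multipliers $\bm y_4,\bm y_5$, and split the equality $A_1\bm x=\bm a_1$ into a pair of $\le$ rows whose multipliers recombine into a free $\bm y_1$. Next, to absorb the strict constraints, I would introduce a single auxiliary scalar $t$ and replace each strict row $\bm v^\top\bm x<c$ by the weak row $\bm v^\top\bm x+t\le c$. Alternative~1 is then feasible if and only if the resulting weak $(\bm x,t)$-system admits a solution with $t>0$: any feasible $\bm x_0$ for the original provides a positive slack in every strict row and one may take $t_0$ to be half the smallest such slack, while conversely any $(\bm x_0,t_0)$ with $t_0>0$ returns $\bm x_0$.

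At this point only a single scalar strict inequality $t>0$ remains, and I would finish with a short LP-duality argument wrapped around \autoref{Farkas-lemma}. Infeasibility decomposes into two cases: either the weak $(\bm x,t)$-system is itself infeasible, in which case Farkas directly produces multipliers with $\sum_{i=1}^{5}\langle\bm y_i,\bm a_i\rangle<0$; or the weak system is feasible but $\sup t\le 0$, in which case LP duality produces multipliers with $\sum_{i=1}^{5}\langle\bm y_i,\bm a_i\rangle=0$ together with a strictly positive coefficient on the $t$-column. The main obstacle is localized to this second case: only the rows originating as strict constraints in $A_3$ and $A_5$ carry a nonzero $t$-coefficient, so a positive $t$-coefficient necessarily forces $\bm y_3\ne\bm 0$ or $\bm y_5\ne\bm 0$. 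Checking this attribution carefully, to rule out that the $t$-contribution could be absorbed into rows that carry no $t$, is where sign bookkeeping is delicate, but once settled it yields precisely the disjunction stated in alternative~2.
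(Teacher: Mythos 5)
Your proof is correct, but it follows a genuinely different route from the paper's. Both proofs introduce an auxiliary scalar $t$ to turn the strict rows $A_3\bm x<\bm a_3$ and $A_5\bm x>\bm a_5$ into weak rows, but they then diverge on how the residual strict condition on $t$ is eliminated. The paper homogenizes the entire right-hand side, writing $A_i\bm x\le t\bm a_i$ (and $A_3\bm x\le t\bm a_3-\bm 1$, etc.) together with the extra constraint $t\ge 1$, and then applies \autoref{Farkas-lemma} \emph{once} to the combined $(\bm x,t)$-system; the multiplier $y_0$ attached to the row $t\ge 1$ is precisely what separates the two branches of alternative~2, with $y_0>0$ giving $\sum\langle\bm y_i,\bm a_i\rangle<0$ and $y_0=0$ giving equality with $\bm y_3\ne\bm 0$ or $\bm y_5\ne\bm 0$. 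Your version keeps the right-hand side unscaled, appends $t$ only to the strict rows, and splits into two cases: the weak $(\bm x,t)$-system is infeasible (plain Farkas, giving the $<0$ branch with $\bm y_3=\bm y_5=\bm 0$), or it is feasible with $\sup t\le 0$ (strong LP duality, giving a dual solution with positive $t$-coefficient). The two approaches are both valid; the paper's buys a one-shot Farkas application at the cost of a slightly more involved homogenized system, whereas yours avoids the homogenization but invokes strong LP duality, a result that does follow from Farkas but is still an extra ingredient. One small imprecision in your write-up: in the second case, strong duality gives a dual optimum equal to $\sup t\le 0$, so $\sum\langle\bm y_i,\bm a_i\rangle\le 0$ rather than necessarily $=0$; when the optimum is strictly negative you land in the first branch, and only when it equals zero do you need the positivity of the $t$-column coefficient to force $\bm y_3\ne\bm 0$ or $\bm y_5\ne\bm 0$. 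The remainder of your reduction (splitting $A_1\bm x=\bm a_1$ into two inequalities so that $\bm y_1$ is free, flipping $A_4,A_5$ to pick up nonpositive multipliers, and observing that the $t$-column touches only the $A_3$ and $A_5$ rows) is exactly the sign bookkeeping the paper also performs, and your mutual-exclusivity argument is the standard pairing of a feasible point against the multipliers, which is fine.
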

\begin{proof}
Clearly the statement 1 is equivalent to that there exist $\bm x\in\mathbb{R}^n$ and $t\in\mathbb{R}$ satisfying
\[
t\ge1,\quad A_1\bm x= t\bm a_1,\quad A_2\bm x\le t\bm a_2,\quad A_3\bm x\le t\bm a_3-\bm 1,\quad -A_4\bm x\le -t\bm a_4\;\And\;-A_5\bm x\le -t\bm a_5-\bm 1.
\]
Namely, there exist $(\bm x,t)\in\mathbb{R}^{n+1}$ satisfying
\begin{equation}\label{General-Inequality-System}
 \begin{pmatrix}
 \bm 0&-1\\
 A_1&-\bm a_1\\
 -A_1&\bm a_1\\
 A_2&-\bm a_2\\
 A_3&-\bm a_3\\
  -A_4&\bm a_4\\
   -A_5&\bm a_5
 \end{pmatrix}
 \begin{pmatrix}
 \bm x\\
 t
 \end{pmatrix}\le
 \begin{pmatrix}
 -1\\
 \bm 0\\
 \bm 0\\
 \bm 0\\
 -\bm 1\\
 \bm 0\\
 -\bm 1
 \end{pmatrix}.
\end{equation}
For convenience, we denote the matrix on the left by $\tilde{A}$ and the vector on the right by $\tilde{\bm a}$. From \autoref{Farkas-lemma}, the system in \eqref{General-Inequality-System} is infeasible if and only if there exists a row vector
\[
\tilde{\bm y}=(y_0,\bm y_1^+,\bm y_1^-,\bm y_2,\bm y_3,\bm y_4^-,\bm y_5^-)\in\mathbb{R}^{1+m_1+m_1+m_2+m_3+m_4+m_5}
\]
satisfying
\begin{equation}\label{General-Inequality-System1}
\tilde{\bm y}\ge\bm 0,\quad\tilde{\bm y}\tilde{A}=\bm 0\quad\And\quad \langle\tilde{\bm y},\tilde{\bm a}\rangle<0.
\end{equation}
Let $\bm y_1=\bm y_1^+-\bm y_1^-$, $\bm y_4=-\bm y_4^-$ and $\bm y_5=-\bm y_5^-$. Then \eqref{General-Inequality-System1} becomes
\begin{equation*}
y_0\ge0,\quad\bm y_i\ge \bm 0\;(i=2,3),\quad \bm y_j\le\bm 0\;(j=4,5),\quad\sum_{i=1}^{5}\bm y_iA_i=\bm 0
\end{equation*}
and
\[
\sum_{i=1}^{5}\langle\bm y_i,\bm a_i\rangle=-y_0,\quad -\langle\bm y_3,\bm 1\rangle+\langle\bm y_5,\bm 1\rangle<y_0.
\]
If $y_0>0$, then $\sum_{i=1}^{5}\langle\bm y_i,\bm a_i\rangle<0$ and $-\langle\bm y_3,\bm 1\rangle+\langle\bm y_5,\bm 1\rangle\le 0<y_0$ always holds.
If $y_0=0$, then $\sum_{i=1}^{5}\langle\bm y_i,\bm a_i\rangle=0$ and $-\langle\bm y_3,\bm 1\rangle+\langle\bm y_5,\bm 1\rangle<0$ means $\bm y_3\ne\bm 0$ or $\bm y_5\ne\bm0$.  Then we obtain that the system in \eqref{General-Inequality-System} is infeasible if and only if the statement 2 holds. So we complete this proof.
\end{proof}

Recall from \eqref{Whole-Space-Partition} that the ambient space $\mathbb{R}^m$ are partitioned into the adjoint union of open faces of the derived arrangement $\delta\mathcal{A}_{\bm o}$. This demonstrates that for each member $\bm x\in\mathbb{R}^m$, there is a unique face $F$ of $\delta\mathcal{A}_{\bm o}$ satisfying ${\bm x}\in \relint(F)$, denoted by $F_{\bm x}$. Indeed, $F_{\bm x}$ is the inclusion-minimum face of $\mathcal{F}(\delta\mathcal{A}_{\bm o})$ containing ${\bm x}$. The following two lemmas are crucial for establishing the close connections among faces of extended deformations of convex polyhedra.

\begin{lemma}\label{Nonempty}
Let $P(\bm a,I,J,K)\ne\emptyset$ and $F_{\bm a}\in\mathcal{F}(\delta\mathcal{A}_{\bm o})$. Then $P(\bm b,I,J,K)\ne\emptyset$ for any $\bm b\in F_{\bm a}$.
\end{lemma}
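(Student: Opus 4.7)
The plan is to argue by contradiction using the General Farkas' lemma (\autoref{General-Farkas-lemma}), together with the standard fact that every element of the left null space of $U$ admits a conformal decomposition into scalar multiples of the circuit vectors defined in \eqref{circuit-vector}.

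First I would suppose $P(\bm b, I, J, K) = \emptyset$. Applying \autoref{General-Farkas-lemma} with $A_1 = U_I$, $A_2 = U_J$, $A_4 = U_K$ (and empty strict-inequality blocks), I obtain a row vector $\bm y \in \mathbb{R}^m$ satisfying
\[
\bm y_I \text{ unrestricted},\quad \bm y_J \ge \bm 0,\quad \bm y_K \le \bm 0,\quad \bm y U = \bm 0,\quad\langle \bm y, \bm b\rangle < 0.
\]
Symmetrically, $P(\bm a, I, J, K) \ne \emptyset$ forces every such row vector to pair nonnegatively with $\bm a$.

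Second, I would decompose $\bm y$ conformally: write $\bm y = \sum_{k=1}^{r} \bm z^k$ with $\bm z^k = \lambda_k \bm c^{C_k}$, where each $C_k \in \mathcal{C}(\mathcal{A}_{\bm o})$ is a circuit and each summand has, on every coordinate $i$, a sign in $\{0,\Sign(y_i)\}$. This is the classical Caratheodory-type decomposition of the left null space of $U$ by signed circuit vectors. Because its signs are inherited from $\bm y$, each $\bm z^k$ itself satisfies $(\bm z^k)_J \ge \bm 0$, $(\bm z^k)_K \le \bm 0$ and $\bm z^k U = \bm 0$, so the Farkas condition for $\bm a$ forces $\langle \bm z^k, \bm a\rangle \ge 0$ for every $k$, i.e. $\Sign\langle \bm c^{C_k}, \bm a\rangle \in \{0,\Sign(\lambda_k)\}$.

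The decisive step uses $\bm b \in F_{\bm a}$. Since $\bm a \in \relint(F_{\bm a})$, for each circuit $C_k$ either $F_{\bm a} \subseteq H_{\bm c^{C_k}}$ (when $\langle \bm c^{C_k}, \bm a\rangle = 0$) or $F_{\bm a}$ lies in the closed half-space of $H_{\bm c^{C_k}}$ selected by the sign of $\langle \bm c^{C_k}, \bm a\rangle$. Specialising to $\bm b$ and multiplying by $\lambda_k$ yields $\Sign\langle \bm z^k, \bm b\rangle \in \{0,\Sign\langle \bm z^k, \bm a\rangle\} \subseteq \{0,+\}$, so $\langle \bm z^k, \bm b\rangle \ge 0$ for every $k$. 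Summing gives $\langle \bm y, \bm b\rangle \ge 0$, contradicting the choice of $\bm y$. The only delicate ingredient is the conformal circuit decomposition of a left null vector of $U$; once that is in place, the proof reduces to a short pairing of Farkas with the defining property of $F_{\bm a}$ as a face of $\delta\mathcal{A}_{\bm o}$.
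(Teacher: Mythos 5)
Your proposal is correct, and it reaches the same contradiction as the paper's proof, but by a genuinely different route. Both proofs start identically: assume $P(\bm b,I,J,K)=\emptyset$, apply \autoref{General-Farkas-lemma} (with empty strict blocks) to extract a row vector $\bm y$ with $\bm y_J\ge\bm 0$, $\bm y_K\le\bm 0$, $\bm y U=\bm 0$ and $\langle\bm y,\bm b\rangle<0$. Where you diverge is in how you reduce $\bm y$ to circuit vectors. The paper argues iteratively: it picks a circuit $C_0\subseteq\supp(\bm y)$ with circuit vector $\bm c$, and either $\bm c$ already serves as a Farkas witness against $P(\bm a,I,J,K)$ (the defining property of $F_{\bm a}$ converts $\langle\bm c,\bm b\rangle<0$ into $\langle\bm c,\bm a\rangle<0$), or it performs a support-reduction pivot $\bm y'=\bm y+\lambda\bm c$ that preserves the Farkas conditions and strictly shrinks $\supp(\bm y)$, repeating until a circuit vector is reached. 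Your proof instead invokes the classical conformal (elementary-vector) decomposition of the left null space of $U$ in one shot: $\bm y=\sum_k\lambda_k\bm c^{C_k}$ with each summand sign-compatible with $\bm y$. Each summand is then a legitimate Farkas witness, forcing $\langle\lambda_k\bm c^{C_k},\bm a\rangle\ge 0$, and the face membership $\bm b\in F_{\bm a}$ (with $\bm a\in\relint(F_{\bm a})$) transfers this to $\langle\lambda_k\bm c^{C_k},\bm b\rangle\ge 0$; summing gives $\langle\bm y,\bm b\rangle\ge 0$, contradiction. The two routes are morally the same computation: the paper's pivoting loop is essentially an inline proof of (one step of) the conformal decomposition you are citing. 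Your version is shorter and more conceptual, but it rests on a theorem the paper neither cites nor proves (Rockafellar's elementary-vector decomposition, sometimes credited to Minty); the paper's version is longer but self-contained. One small remark: the decomposition you use is usually called the \emph{conformal} or \emph{elementary-vector} decomposition rather than ``Carath\'{e}odory-type,'' and if this argument were to replace the paper's, a precise reference for it should be supplied.
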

\begin{proof}
We give a proof  by contradiction. Suppose $P(\bm b,I,J,K)=\emptyset$. This is equivalent to the insolvability of  the following inequality systems
\begin{equation}\label{Inequality-System}
U_I\bm x=\bm b_I,\quad U_J\bm x\le\bm b_J\;\And\;U_K\bm x\ge\bm b_K.
\end{equation}
Applying \autoref{General-Farkas-lemma} to \eqref{Inequality-System}, it is further equivalent to that there exists a row vector $\bm y=(y_1,\ldots,y_m)\in\mathbb{R}^m$
satisfying
\begin{equation}\label{Farkas-Lemma}
\bm y_J\ge\bm0,\quad\bm y_K\le\bm0,\quad\bm yU=\bm 0\;\And\; \langle\bm y,\bm b\rangle<0.
\end{equation}
From the definition of the circuit vector in \eqref{circuit-vector},  we have that each circuit vector $\bm c^C$ satisfies $\bm c^CU=\bm 0$ for $C\in\mathcal{C}(\mathcal{A}_{\bm o})$. Note that $\mathcal{C}(\mathcal{A}_{\bm o})$ is the set of  inclusion-minimum nonempty supports of members in the solution space of the equation $\bm x U=\bm 0$. Hence, there exists a circuit $C_0\in\mathcal{C}(\mathcal{A}_{\bm o})$ such that $C_0\subseteq \supp(\bm y)$ and its circuit vector $\bm c=(c_1,\ldots,c_m)$ satisfying $\bm cU=\bm 0$. Suppose $\bm c_J\ge\bm0$ and $\bm c_K\le\bm 0$ and $\langle\bm c,\bm b\rangle<0$,  then we arrive at $\langle\bm c,\bm a\rangle<0$ via $\bm b\in F_{\bm a}$. By \autoref{General-Farkas-lemma}, we obtain $P(\bm a,I,J,K)=\emptyset$, a contradiction. Therefore, we can assume that $\bm c$ coincides with one of the following both cases:
\begin{itemize}
\item $\bm c_J$ contains negative entries, or $\bm c_K$ contains positive entries and $\langle\bm c,\bm b\rangle\le0$;
\item $\bm c_J=\bm 0,\bm c_K=\bm 0$ and $\langle\bm c,\bm b\rangle=0$.
\end{itemize}
For the former case, let $\lambda=\min\big\{-\frac{y_j}{c_j},-\frac{y_k}{c_k}\mid c_j<0,j\in J\And c_k>0,k\in K\big\}$ and $\bm y'=\bm y+\lambda\bm c$. For the latter case, let $\lambda=-\frac{y_i}{c_i}$ for some $i\in C_0$ and $\bm y'=\bm y+\lambda\bm c$. Then, in either case, $\bm y'$ always satisfies
\begin{equation*}
\bm y'_J\ge\bm 0,\quad\bm y'_K\le \bm 0,\quad\bm y'U=0,\quad \langle\bm y',\bm b\rangle<0\;\And\;\supp(\bm y')\subsetneqq\supp(\bm y).
\end{equation*}
If $\bm y'$ is not a circuit vector of $\mathcal{A}_{\bm o}$,  then as with $\bm y$, we can repeat the same step to yield a row vector $\bm y''\in\mathbb{R}^m$ with the same properties as  \eqref{Farkas-Lemma} and $\supp(\bm y'')\subsetneqq\supp(\bm y')$. After finitely many steps, we eventually obtain a circuit vector $\bm y^{'''}\in\mathbb{R}^m$ satisfying \eqref{Farkas-Lemma}. This implies $\langle\bm y^{'''},\bm a\rangle<0$ from $\bm b\in F_{\bm a}$.  Using \autoref{General-Farkas-lemma} again, we arrive at $P(\bm a,I,J,K)=\emptyset$, which contradicts the assumption $P(\bm a,I,J,K)\ne\emptyset$. So we get $P(\bm b,I,J,K)\ne\emptyset$.
\end{proof}

Associated with a tetrad $(\bm a, I,J,K)$, we give a (possibly empty) relatively open convex polyhedron $\mathring{P}(\bm a,I,J,K)$ in $\mathbb{R}^n$ to be
\begin{equation*}\label{Open-Polyhedral-Set}
\mathring{P}(\bm a,I,J,K):=\big\{\bm x\in\mathbb{R}^n\mid U_I\bm x= \bm a_I,\;U_J\bm x< \bm a_J,\;U_K\bm x> \bm a_K\big\}.
\end{equation*}
Using the same method as in the proof of  \autoref{Nonempty}, we also achieve the following lemma.
\begin{lemma}\label{Nonempty-Open}
Let $\mathring{P}(\bm a,I,J,K)\ne\emptyset$ and $F_{\bm a}\in\mathcal{F}(\delta\mathcal{A}_{\bm o})$. Then $\mathring{P}(\bm b,I,J,K)\ne\emptyset$ for any $\bm b\in\relint(F_{\bm a})$.
\end{lemma}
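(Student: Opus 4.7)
The plan is to mirror the proof of Lemma \ref{Nonempty}, adapting it to the \emph{strict}-inequality system defining $\mathring{P}$ and exploiting the stronger property that $\bm b\in\relint(F_{\bm a})$ preserves the sign, not merely the value, of every linear form associated with a circuit of $\mathcal{A}_{\bm o}$.

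First, I would argue by contradiction: assume $\mathring{P}(\bm b,I,J,K)=\emptyset$ for some $\bm b\in\relint(F_{\bm a})$. Applying Proposition \ref{General-Farkas-lemma} to the strict system $U_I\bm x=\bm b_I,\;U_J\bm x<\bm b_J,\;U_K\bm x>\bm b_K$ (i.e.\ take $A_1=U_I$, $A_3=U_J$, $A_5=U_K$, with $A_2,A_4$ empty), its infeasibility is equivalent to the existence of a row vector $\bm y\in\mathbb{R}^m$ with
\[
\bm y_J\ge\bm 0,\quad \bm y_K\le\bm 0,\quad \bm yU=\bm 0,
\]
satisfying either (a) $\langle\bm y,\bm b\rangle<0$, or (b) $\langle\bm y,\bm b\rangle=0$ together with $\bm y_J\ne\bm 0$ or $\bm y_K\ne\bm 0$. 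Call such a $\bm y$ a $\bm b$-certificate.

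Second, I would reduce the support of $\bm y$ to a single circuit, just as in Lemma \ref{Nonempty}. If $\supp(\bm y)$ is not already a circuit of $\mathcal{A}_{\bm o}$, pick $C_0\subseteq\supp(\bm y)$ with circuit vector $\bm c$. If $\bm c$ (or a suitable rescaling, possibly with sign flip) is already a $\bm b$-certificate, I proceed to the transfer step. Otherwise, mimicking the calculation in the proof of Lemma \ref{Nonempty}, I choose $\lambda\in\mathbb{R}$ so that $\bm y':=\bm y+\lambda\bm c$ is still a $\bm b$-certificate while $\supp(\bm y')\subsetneq\supp(\bm y)$. Iterating finitely many times produces a nonzero scalar multiple $\bm c^{*}$ of some circuit vector $\bm c^{C}$ which is itself a $\bm b$-certificate.

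Third, I transfer the certificate from $\bm b$ to $\bm a$. Since $\bm c^C$ is a circuit vector, $H_{\bm c^C}\in\delta\mathcal{A}_{\bm o}$, and $\bm a,\bm b\in\relint(F_{\bm a})$ forces $\Sign\langle\bm c^C,\bm a\rangle=\Sign\langle\bm c^C,\bm b\rangle$ for every circuit $C$, hence $\Sign\langle\bm c^{*},\bm a\rangle=\Sign\langle\bm c^{*},\bm b\rangle$. So whichever of (a) or (b) holds for $\bm c^{*}$ against $\bm b$ also holds against $\bm a$, and Proposition \ref{General-Farkas-lemma} yields $\mathring{P}(\bm a,I,J,K)=\emptyset$, contradicting the hypothesis.

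The hard part will be the bookkeeping inside the support-reduction step under the degenerate alternative (b): starting from $\bm y$ with $\langle\bm y,\bm b\rangle=0$ and $\bm y_J\ne\bm 0$ or $\bm y_K\ne\bm 0$, I must verify that $\bm y+\lambda\bm c$ can be performed in such a way that the sign constraints on $\bm y_J,\bm y_K$ are preserved, the support strictly shrinks, \emph{and} the condition ``$\bm y_J\ne\bm 0$ or $\bm y_K\ne\bm 0$'' is not simultaneously destroyed together with $\langle\bm y,\bm b\rangle=0$. This requires a slightly more intricate case split than the one in Lemma \ref{Nonempty}, where only the strict inequality $\langle\bm y,\bm b\rangle<0$ had to be maintained; the remaining steps are routine adaptations of Lemma \ref{Nonempty}.
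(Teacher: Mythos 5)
Your proposal is correct and matches the paper's approach exactly: the paper gives no written proof for this lemma, stating only that it follows ``using the same method as in the proof of Lemma~\ref{Nonempty}.'' Your plan — apply Proposition~\ref{General-Farkas-lemma} to the strict system, reduce the Farkas certificate to a circuit-supported one, and then transfer from $\bm b$ to $\bm a$ using the fact that both lie in $\relint(F_{\bm a})$ so that $\Sign\langle\bm c^C,\bm a\rangle = \Sign\langle\bm c^C,\bm b\rangle$ for every circuit $C$ — is precisely that method, and you correctly identify the one genuinely new wrinkle (preserving the degenerate alternative $\langle\bm y,\bm b\rangle=0$ together with $\bm y_J\ne\bm 0$ or $\bm y_K\ne\bm 0$ through the support-reduction step), which the paper likewise leaves to the reader.
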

We now proceed to describe the boundedness of convex polyhedra.
\begin{lemma}\label{Unbounded1}
Suppose $P(\bm a,I,J,K)\ne\emptyset$. Then $P(\bm a,I,J,K)$ is unbounded if and only if there exist $\bm u\in\relint\big(P(\bm a,I,J,K)\big)$ and $\bm v\in\mathbb{R}^n\setminus\{\bm 0\}$ such that $\{\bm u+r\bm v\mid r\ge0\}\subseteq P(\bm a,I,J,K)$.
\end{lemma}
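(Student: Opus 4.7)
The sufficiency direction is immediate: any ray with direction $\bm v\ne\bm 0$ contained in $P(\bm a,I,J,K)$ makes the polyhedron unbounded. For the necessity direction, the plan is to extract a recession direction by a compactness argument and then anchor it at a basepoint in the relative interior.

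Since $P(\bm a,I,J,K)$ is nonempty and convex, $\relint(P(\bm a,I,J,K))$ is nonempty; fix any $\bm u$ in it. Unboundedness provides a sequence $\{\bm x_k\}\subseteq P(\bm a,I,J,K)$ with $\|\bm x_k\|\to\infty$, and by compactness of the unit sphere the normalized vectors $\bm v_k:=\bm x_k/\|\bm x_k\|$ admit a subsequence converging to some unit vector $\bm v\ne\bm 0$. Dividing the defining relations $U_I\bm x_k=\bm a_I$, $U_J\bm x_k\le\bm a_J$, $U_K\bm x_k\ge\bm a_K$ by $\|\bm x_k\|$ and letting $k\to\infty$ yields $U_I\bm v=\bm 0$, $U_J\bm v\le\bm 0$, and $U_K\bm v\ge\bm 0$. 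Consequently, for every $r\ge 0$, the point $\bm u+r\bm v$ satisfies $U_I(\bm u+r\bm v)=\bm a_I$, $U_J(\bm u+r\bm v)\le\bm a_J$, and $U_K(\bm u+r\bm v)\ge\bm a_K$, so the ray $\{\bm u+r\bm v\mid r\ge 0\}$ lies in $P(\bm a,I,J,K)$.

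The main obstacle is technical rather than conceptual: one must ensure that the limit direction $\bm v$ is nonzero (automatic, since each $\bm v_k$ has unit norm) and that weak linear inequalities are preserved under taking limits (continuity of $\bm x\mapsto U\bm x$). A Farkas-based alternative would establish the same conclusion by showing that the recession cone $\{\bm v\in\mathbb{R}^n\mid U_I\bm v=\bm 0,\,U_J\bm v\le\bm 0,\,U_K\bm v\ge\bm 0\}$ is nontrivial precisely when $P(\bm a,I,J,K)$ is unbounded, which fits the style of \autoref{General-Farkas-lemma} used earlier in this section; however, the compactness argument above is shorter and self-contained.
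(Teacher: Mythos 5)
Your proposal is correct, and it takes a genuinely different route from the paper's. The paper argues by contraposition: assuming that every ray from a relative interior point eventually exits $P(\bm a,I,J,K)$, it parameterizes rays by directions $\bm v\in\mathbb{S}^{n-1}$, assigns to each a terminal parameter $r_{\bm u_0,\bm v}$, and concludes that $P(\bm a,I,J,K)$ sits inside the ball $B^n(\bm u_0, r_{\bm u_0})$ with $r_{\bm u_0}=\sup_{\bm v} r_{\bm u_0,\bm v}$, contradicting unboundedness. (The paper leaves implicit the finiteness of this supremum, which in fact rests on the polyhedral structure or a separate compactness argument.) You instead run the forward direction directly: extract an unbounded sequence, normalize onto the compact sphere, pass to a convergent subsequence, and observe that the limit direction $\bm v$ satisfies the homogenized constraints $U_I\bm v=\bm 0$, $U_J\bm v\le\bm 0$, $U_K\bm v\ge\bm 0$, so that $\bm v$ is a recession direction and the ray $\{\bm u+r\bm v : r\ge 0\}$ from any $\bm u\in\relint(P(\bm a,I,J,K))$ stays inside. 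Your compactness argument is cleaner and more robust; it makes the finiteness issue vanish, and it explicitly produces the recession direction that \autoref{Unbounded2} then characterizes by the very same inequalities $U_I\bm v=\bm 0$, $U_J\bm v\le\bm 0$, $U_K\bm v\ge\bm 0$ — so your approach effectively proves both \autoref{Unbounded1} and the substance of \autoref{Unbounded2} in one pass, whereas the paper keeps them as two separate steps.
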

\begin{proof}
For sufficiency, since $P(\bm a,I,J,K)$ contains a ray $\{\bm u+r\bm v\mid r\ge0\}$, $P(\bm a,I,J,K)$ is unbounded. Next, we verify the necessity by negation. Suppose  $P(\bm a,I,J,K)\cap\{\bm u+r\bm v\mid r\ge0\}\ne \{\bm u+r\bm v\mid r\ge0\}$ for any $\bm u\in\relint\big(P(\bm a,I,J,K)\big)$ and $\bm v\in\mathbb{R}^n\setminus\{\bm 0\}$. Since $P(\bm a,I,J,K)$ is a closed convex set, there is $r_{\bm u,\bm v}\ge0$ such that
\[
P(\bm a,I,J,K)\cap\{\bm u+r\bm v\mid r\ge0\}=[\bm u,\bm u+r_{\bm u,\bm v}\bm v],
\]
where $[\bm u,\bm u+r_{\bm u,\bm v}\bm v]$ denotes the segment from $\bm u$ to $\bm u+r_{\bm u,\bm v}\bm v$. Given $\bm u_0\in\relint\big(P(\bm a,I,J,K)\big)$, we have
\begin{align*}
  P(\bm a,I,J,K)&=P(\bm a,I,J,K)\bigcap\bigcup_{\bm v\in\mathbb{S}^{n-1}}\{\bm u_0+r\bm v\mid r\ge0\}\\
  &=P(\bm a,I,J,K)\bigcap\bigcup_{\bm v\in\mathbb{S}^{n-1}}[\bm u_0,\bm u_0+r_{\bm u_0}\bm v]\\
  &\subseteq B^n(\bm u_0,r_{\bm u_0}),
\end{align*}
where $\mathbb{S}^{n-1}:=\{\bm x\in\mathbb{R}^n\mid ||\bm x||=1\}$ is the unit sphere, and $B^n(\bm u_0,r_{\bm u_0})$ is the ball with centre $\bm u_0$ and radius $r_{\bm u_0}$ given by
\[
r_{\bm u_0}=\sup_{\bm v\in\mathbb{S}^{n-1}}\big\{r_{\bm u_0,\bm v}\mid P(\bm a,I,J,K)\cap\{\bm u_0+r\bm v\mid r\ge0\}=[\bm u_0, \bm u_0+r_{\bm u_0,\bm v}\bm v]\big\}.
\]
This means that $P(\bm a,I,J,K)$ is bounded, a contradiction. So $P(\bm a,I,J,K)$ is unbounded. 
\end{proof}

\begin{lemma}\label{Unbounded2}
Suppose $P(\bm a,I,J,K)\ne\emptyset$. Then there exist $\bm u\in\relint\big(P(\bm a,I,J,K)\big)$ and $\bm v\in\mathbb{R}^n\setminus\{\bm 0\}$ such that $\{\bm u+r\bm v\mid r\ge0\}\subseteq P(\bm a,I,J,K)$ if and only if there is $\bm v\in\mathbb{R}^n\setminus\{\bm 0\}$ such that $\langle\bm u_i,\bm v\rangle=0$, $\langle\bm u_j,\bm v\rangle\le 0$ and $\langle\bm u_k,\bm v\rangle\ge 0$ for each $i\in I$, $j\in J$ and $k\in K$.
\end{lemma}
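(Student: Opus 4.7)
This is a standard recession-cone characterization of when a nonempty polyhedron contains a ray starting from an interior point, and I would prove it by direct manipulation of the defining (in)equalities, using that the parameter $r$ is unbounded above.

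\textbf{Sufficiency $(\Leftarrow)$.} Assume we have $\bm v\in\mathbb{R}^n\setminus\{\bm 0\}$ satisfying the three sign conditions. Since $P(\bm a,I,J,K)$ is nonempty, it is a nonempty convex set, so its relative interior is nonempty; pick any $\bm u\in\relint(P(\bm a,I,J,K))$. For every $r\ge 0$ and every $i\in I$, the assumption $\langle\bm u_i,\bm v\rangle=0$ together with $\langle\bm u_i,\bm u\rangle=a_i$ gives $\langle\bm u_i,\bm u+r\bm v\rangle=a_i$; for $j\in J$, the assumption $\langle\bm u_j,\bm v\rangle\le 0$ together with $\langle\bm u_j,\bm u\rangle\le a_j$ gives $\langle\bm u_j,\bm u+r\bm v\rangle\le a_j$; and analogously $\langle\bm u_k,\bm u+r\bm v\rangle\ge a_k$ for $k\in K$. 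Hence $\{\bm u+r\bm v\mid r\ge 0\}\subseteq P(\bm a,I,J,K)$.

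\textbf{Necessity $(\Rightarrow)$.} Conversely, assume $\bm u\in\relint(P(\bm a,I,J,K))$ and $\bm v\ne\bm 0$ are such that the entire ray $\{\bm u+r\bm v\mid r\ge 0\}$ lies in $P(\bm a,I,J,K)$. For $i\in I$, the identity $\langle\bm u_i,\bm u\rangle+r\langle\bm u_i,\bm v\rangle=a_i$ must hold for every $r\ge 0$; evaluating at $r=0$ gives $\langle\bm u_i,\bm u\rangle=a_i$, so $r\langle\bm u_i,\bm v\rangle=0$ for all $r\ge 0$, forcing $\langle\bm u_i,\bm v\rangle=0$. For $j\in J$, the inequality $\langle\bm u_j,\bm u\rangle+r\langle\bm u_j,\bm v\rangle\le a_j$ holds for all $r\ge 0$; if $\langle\bm u_j,\bm v\rangle>0$ then the left side tends to $+\infty$ as $r\to\infty$, contradicting the bound, so $\langle\bm u_j,\bm v\rangle\le 0$. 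The argument for $k\in K$ is symmetric and yields $\langle\bm u_k,\bm v\rangle\ge 0$.

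\textbf{Main obstacle.} The argument is essentially mechanical; the only nontrivial input is that a nonempty convex polyhedron has nonempty relative interior, which is used only in the sufficiency direction to exhibit the starting point $\bm u$. The necessity direction requires no strict-interior hypothesis on $\bm u$ and relies purely on letting $r\to\infty$ in the affine constraints. So I expect no real obstacle beyond cleanly splitting the index set into $I$, $J$, $K$ and recording the three cases in parallel.
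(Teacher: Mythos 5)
Your proof is correct. In the sufficiency direction you do the same thing the paper does. In the necessity direction you argue more directly: you read off $\langle\bm u_i,\bm v\rangle=0$ from plugging $r=0$ and $r>0$ into the equality constraints, and you get $\langle\bm u_j,\bm v\rangle\le 0$ (resp.\ $\langle\bm u_k,\bm v\rangle\ge 0$) by letting $r\to\infty$ in the inequality constraints. The paper instead first makes a WLOG reduction so that $I$ consists exactly of the indices with $P\subseteq H_{\bm u_i,a_i}$, proves an auxiliary claim that the ray stays in the \emph{open} half-spaces $\mathring{H}^-_{\bm u_j,a_j}$, $\mathring{H}^+_{\bm u_k,a_k}$, and only then draws the same conclusion by the same $r\to\infty$ reasoning. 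That detour is not logically needed: the strict inequalities along the ray do not by themselves yield $\langle\bm u_j,\bm v\rangle\le0$ without the unboundedness argument anyway, which is exactly the step you isolate. Your observation that necessity requires no relative-interior hypothesis on $\bm u$ is also accurate and clarifies what the hypothesis is really buying, namely a witness point for the sufficiency direction.
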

\begin{proof}
Without loss of generality, suppose $I=\big\{i\in[m]\mid P(\bm a,I,J,K)\subseteq H_{\bm u_i,a_i}\big\}$. For necessity, we first assert that $\bm u+r\bm v$ belongs to $\mathring{H}^-_{\bm u_j,a_j}$ ($\mathring{H}^+_{\bm u_k,a_k}$, resp.) for any $r\ge 0$ and $j\in J$ ($k\in K$, resp.).  Arguing by negation, suppose there are $r_0>0$ and $j_0\in J$ satisfying $\bm u+r_0\bm v\in H_{\bm u_{j_0},a_{j_0}}$, i.e., $\langle\bm u_{j_0},\bm u+r_0\bm v\rangle=a_{j_0}$.
Noting $\langle\bm u_{j_0},\bm u\rangle<a_{j_0}$ and $r_0>0$, then
\[
\langle\bm u_{j_0},\bm v\rangle=\frac{a_{j_0}-\langle\bm u_{j_0},\bm u\rangle}{r_0}>0.
\]
Hence, for any $\varepsilon>0$, we have $\langle\bm u_{j_0},\bm u+(r_0+\varepsilon)\bm v\rangle>a_{j_0}$. Namely, $\bm u+(r_0+\varepsilon)\bm v\notin P(\bm a,I,J,K)$ for all $\varepsilon>0$, which contradicts $\bm u+r\bm v\in P(\bm a,I,J,K)$ for $r\ge0$. So, $\bm u+r\bm v\in \mathring{H}^-_{\bm u_j,a_j}$ for $r\ge0$ and $j\in J$. Using the same argument as the former case, we can show $\bm u+r\bm v\in \mathring{H}^+_{\bm u_k,a_k}$ for $r\ge0$ and $k\in K$ as well.
Therefore, the assertion holds. In other words, the assertion says that
\[
\langle\bm u_j,\bm u+r\bm v\rangle<a_j\quad\And\quad\langle\bm u_k,\bm u+r\bm v\rangle>a_k
\]
for $r\ge 0$, $j\in J$ and $k\in K$. This implies $\langle\bm u_j,\bm v\rangle\le0$ and $\langle\bm u_k,\bm v\rangle\ge0$ for $j\in J$ and $k\in K$. In addition, for $i\in I$ and $r\ge 0$, we have $\langle\bm u_i,\bm u\rangle=a_i$ and $\langle\bm u_i,\bm u+r\bm v\rangle=a_i$. So $\langle\bm u_i,\bm v\rangle=0$ for any $i\in I$.

For sufficiency, taking $\bm u\in\relint\big(P(\bm a,I,J,K)\big)$, clearly $\langle\bm u_i,\bm u\rangle=a_i$, $\langle\bm u_j,\bm u\rangle\le a_j$ and  $\langle\bm u_k,\bm u\rangle\ge a_k$ for each $i\in I$, $j\in J$ and $k\in K$. Together with conditions $\langle\bm u_i,\bm v\rangle=0$, $\langle\bm u_j,\bm v\rangle\le0$ and  $\langle\bm u_k,\bm v\rangle\ge 0$ for $i\in I$, $j\in J$ and $k\in K$, we have
\[
\langle\bm u_i,\bm u+r\bm v\rangle=a_i,\quad\langle\bm u_j,\bm u+r\bm v\rangle\le a_j\quad\And\quad  \langle\bm u_k,\bm u+r\bm v\rangle\ge a_k
\]
for any $i\in I$, $j\in J$, $k\in K$ and $r\ge 0$. Namely, the ray $\{\bm u+r\bm v\mid r\ge0\}$ is contained in $P(\bm a,I,J,K)$. This finishes the proof.
\end{proof}

The next theorem states that the convex polyhedra $P(\bm a,I,J,K)$ and $P(\bm b,I,J,K)$ are either both bounded or unbounded whenever $\bm a$ and $\bm b$ come from the same face of $\delta\mathcal{A}_{\bm o}$.
\begin{theorem}\label{Bounded-Unbounded}
Let $F\in\mathcal{F}(\delta\mathcal{A}_{\bm o})$ and $\bm a, \bm b\in F$. For any partition $I,J,K$ of $[m]$,  $P(\bm a,I,J,K)$ is a polytope {\rm(}an unbounded convex polyhedron, resp.{\rm)} if and only if $P(\bm b,I,J,K)$ is a polytope {\rm(}an unbounded convex polyhedron, resp.{\rm)}.
\end{theorem}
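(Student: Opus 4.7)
The strategy decouples the theorem into two separately preserved invariants along $F$: \emph{non-emptiness} and \emph{the existence of a ray inside the polyhedron}. Since a polytope is a non-empty bounded convex polyhedron, while an unbounded convex polyhedron is a non-empty one containing a ray, verifying each invariant across $\bm a,\bm b\in F$ yields both ``iff'' clauses of the statement simultaneously.

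For the non-emptiness invariant, I would apply \autoref{Nonempty} symmetrically. In the principal case $\bm a,\bm b\in\relint(F)$ one has $F_{\bm a}=F_{\bm b}=F$, so $\bm b\in F_{\bm a}$ and $\bm a\in F_{\bm b}$ both hold, and \autoref{Nonempty} directly delivers $P(\bm a,I,J,K)\ne\emptyset\iff P(\bm b,I,J,K)\ne\emptyset$. If one of the two points lies on the relative boundary of $F$, I would route the argument through an auxiliary point $\bm c\in\relint(F)$ (so that $F_{\bm c}=F$) and invoke \autoref{Nonempty} on each leg $\bm a\leftrightarrow\bm c$ and $\bm c\leftrightarrow\bm b$.

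For the ray invariant, the crucial observation is that combining \autoref{Unbounded1} and \autoref{Unbounded2} reduces the unboundedness of $P(\bm a,I,J,K)$ to the existence of a vector $\bm v\in\mathbb{R}^n\setminus\{\bm 0\}$ satisfying $\langle\bm u_i,\bm v\rangle=0$ for all $i\in I$, $\langle\bm u_j,\bm v\rangle\le 0$ for all $j\in J$, and $\langle\bm u_k,\bm v\rangle\ge 0$ for all $k\in K$. This criterion refers only to the row vectors $\bm u_1,\ldots,\bm u_m$ and to the partition $I,J,K$, and makes no reference to $\bm a$ whatsoever. Consequently, whenever both $P(\bm a,I,J,K)$ and $P(\bm b,I,J,K)$ are non-empty, one is unbounded if and only if the other is, and hence one is bounded if and only if the other is.

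Assembling the two parts, if both polyhedra are empty then neither is a polytope nor an unbounded convex polyhedron, and both equivalences hold vacuously; otherwise, by the non-emptiness step, both are non-empty, and by the parameter-independent criterion above they share the same boundedness status, giving both equivalences at once. The main technical obstacle is really the non-emptiness step, because \autoref{Nonempty} is formulated one-directionally and requires $\bm b\in F_{\bm a}$ (with $\bm a$ in the relative interior of $F_{\bm a}$ implicitly used in its proof via circuit-vector sign analysis); once this is handled by the auxiliary-point trick above, the rest of the argument is immediate from the $\bm a$-independence of \autoref{Unbounded2}.
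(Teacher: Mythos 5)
Your decomposition exactly mirrors the paper's proof: non-emptiness is handled by \autoref{Nonempty}, and then boundedness is decided by combining \autoref{Unbounded1} with \autoref{Unbounded2}, whose criterion (existence of $\bm v\ne\bm 0$ with $U_I\bm v=\bm 0$, $U_J\bm v\le\bm 0$, $U_K\bm v\ge\bm 0$) is independent of $\bm a$. In the principal case $\bm a,\bm b\in\relint(F)$, your argument is complete and matches the authors' intent.

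However, your auxiliary-point patch for the case where one of $\bm a,\bm b$ lies on $\partial F$ does not close the gap. \autoref{Nonempty} transfers non-emptiness in only one direction: from $\bm c$ (the point whose minimal face $F_{\bm c}$ is used) to any $\bm d\in F_{\bm c}$. If $\bm a\in\partial F$, then $F_{\bm a}\subsetneq F=F_{\bm c}$, and $\bm c\notin F_{\bm a}$; so on the leg $\bm a\leftrightarrow\bm c$ you obtain $P(\bm c,\cdot)\ne\emptyset\Rightarrow P(\bm a,\cdot)\ne\emptyset$, but not its converse. The two one-sided implications $P(\bm c,\cdot)\ne\emptyset\Rightarrow P(\bm a,\cdot)\ne\emptyset$ and $P(\bm c,\cdot)\ne\emptyset\Rightarrow P(\bm b,\cdot)\ne\emptyset$ do not combine into $P(\bm a,\cdot)\ne\emptyset\Leftrightarrow P(\bm b,\cdot)\ne\emptyset$. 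This is not a removable flaw: the statement is in fact \emph{false} as written with $\bm a,\bm b\in F$. Take $n=2$, $m=2$, $\bm u_1=\bm u_2=(1,0)$, so $\mathcal C(\mathcal A_{\bm o})=\{\{1,2\}\}$ with circuit vector $(1,-1)$ and $\delta\mathcal A_{\bm o}=\{a_1=a_2\}\subset\mathbb R^2$. Let $F=\{a_1\le a_2\}$, $I=\{1\}$, $J=\emptyset$, $K=\{2\}$. For $\bm a=(0,0)\in\partial F$ one gets $P(\bm a,I,J,K)=\{(0,y):y\in\mathbb R\}$, an unbounded convex polyhedron; for $\bm b=(0,1)\in\relint(F)$ one gets $P(\bm b,I,J,K)=\emptyset$, which is neither a polytope in the intended (nonempty) sense nor an unbounded polyhedron. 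So the ``if and only if'' fails. The paper's own proof is equally terse on this point and implicitly assumes $\bm a,\bm b\in\relint(F)$ (as every other result in the same section does). The correct reading of the theorem replaces $\bm a,\bm b\in F$ with $\bm a,\bm b\in\relint(F)$, under which your ``principal case'' argument is precisely the paper's proof and is valid; no auxiliary point is then needed.
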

\begin{proof}
From \autoref{Nonempty}, we have that $P(\bm a,I,J,K)\ne\emptyset$ if and only if $P(\bm b,I,J,K)\ne\emptyset$. Suppose $P(\bm a,I,J,K)\ne\emptyset$. Summarizing \autoref{Unbounded1} and \autoref{Unbounded2},  we acquire that  $P(\bm a,I,J,K)$ is bounded (unbounded, resp.) if and only if $P(\bm b,I,J,K)$ is bounded (unbounded, resp.). This completes the proof.
\end{proof}

For each partition $I,J,K$ of $[m]$, we denote by $\mathcal{C}(I,J,K)$ the collection of all vectors $\bm a$ satisfying $P(\bm a,I,J,K)\ne\emptyset$, i.e.,
\[
\mathcal{C}(I,J,K):=\big\{\bm a\in\mathbb{R}^m\mid P(\bm a,I,J,K)\ne\emptyset\big\}.
\]
Fillastre and Izmestiev in \cite[Theorem 1.30]{Fillastre-Izmestiev2017} showed that the solution set $\mathcal{C}(I,J,K)$ is always a polyhedral cone in the parameter space $\mathbb{R}^m$. This leads us to consider the following result.
\begin{corollary}\label{Cone}
For any partition $I,J,K$ of $[m]$, the boundedness of  the convex polyhedra  $P(\bm a,I,J,K)$  keeps consistent for all $\bm a\in \mathcal{C}(I,J,K)$.
\end{corollary}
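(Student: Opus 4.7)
The plan is to observe that the characterization of boundedness furnished by \autoref{Unbounded1} and \autoref{Unbounded2}, taken together, depends only on the partition $(I,J,K)$ and the fixed vectors $\bm u_1,\ldots,\bm u_m$, and not on the parameter $\bm a$ at all. In particular, the corollary is a direct consequence of those two lemmas rather than of \autoref{Bounded-Unbounded}.

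Concretely, fix any $\bm a\in\mathcal{C}(I,J,K)$, so by definition $P(\bm a,I,J,K)\ne\emptyset$. By \autoref{Unbounded1}, $P(\bm a,I,J,K)$ is unbounded if and only if there exist $\bm u\in\relint\big(P(\bm a,I,J,K)\big)$ and a nonzero $\bm v$ with $\{\bm u+r\bm v\mid r\ge0\}\subseteq P(\bm a,I,J,K)$. By \autoref{Unbounded2}, this in turn is equivalent to the existence of a nonzero vector $\bm v\in\mathbb{R}^n$ such that $\langle\bm u_i,\bm v\rangle=0$ for all $i\in I$, $\langle\bm u_j,\bm v\rangle\le 0$ for all $j\in J$, and $\langle\bm u_k,\bm v\rangle\ge 0$ for all $k\in K$. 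The key observation is that this last condition makes no reference to $\bm a$; whether it holds is determined solely by $(I,J,K)$ and the row vectors of $U$. Consequently, for any two $\bm a,\bm b\in\mathcal{C}(I,J,K)$, the polyhedra $P(\bm a,I,J,K)$ and $P(\bm b,I,J,K)$ are simultaneously bounded or simultaneously unbounded, which is exactly the claim.

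I do not anticipate any significant obstacle, since the substantive work already resides in \autoref{Unbounded1} and \autoref{Unbounded2}, and the corollary amounts to noting that $\bm a$ drops out of the characterization. The cited Fillastre--Izmestiev fact that $\mathcal{C}(I,J,K)$ is a polyhedral cone is not logically needed for the argument; its role is only to certify that $\mathcal{C}(I,J,K)$ is the natural geometric domain on which boundedness is constant. An alternative route through \autoref{Bounded-Unbounded} together with a connectedness argument across the faces of $\delta\mathcal{A}_{\bm o}$ meeting $\mathcal{C}(I,J,K)$ seems possible but strictly more delicate, as it would require controlling how boundedness behaves at transitions between faces; the direct argument above avoids this issue entirely.
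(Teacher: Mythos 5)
Your proof is correct, and it is genuinely more direct than the paper's. You observe that \autoref{Unbounded1} and \autoref{Unbounded2} together characterize unboundedness of a nonempty $P(\bm a,I,J,K)$ by the existence of a nonzero $\bm v$ satisfying $\langle\bm u_i,\bm v\rangle=0$ for $i\in I$, $\langle\bm u_j,\bm v\rangle\le 0$ for $j\in J$, and $\langle\bm u_k,\bm v\rangle\ge 0$ for $k\in K$ --- a condition that is literally independent of $\bm a$ (it just says the recession cone $\{\bm v\mid U_I\bm v=\bm 0,\,U_J\bm v\le\bm 0,\,U_K\bm v\ge\bm 0\}$ is nontrivial). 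Since $\bm a\in\mathcal{C}(I,J,K)$ already guarantees nonemptiness by definition, the corollary follows at once. By contrast, the paper routes through \autoref{Bounded-Unbounded}, which only compares $\bm a,\bm b$ lying in the same closed face of $\delta\mathcal{A}_{\bm o}$, and then invokes Fillastre--Izmestiev to know that $\mathcal{C}(I,J,K)$ is a polyhedral cone (hence connected and a union of such faces), implicitly relying on a gluing argument across adjacent faces. You correctly identify that this detour --- and in particular the Fillastre--Izmestiev input --- is not logically necessary here. The only minor thing worth being aware of is that the proof of \autoref{Unbounded2} normalizes $I$ to the full active set, but as stated the lemma's criterion is in terms of the given $(I,J,K)$, and the recession cone it describes is the same either way; your reading of the lemma is the intended one, and your argument goes through.
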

\begin{proof}
When $\mathcal{C}(I,J,K)=\emptyset$, the case is trivial. Suppose $\mathcal{C}(I,J,K)$ is nonempty. From  \autoref{Nonempty}, we know that if $\bm x\in \mathcal{C}(I,J,K)$, then $P(\bm y,I,J,K)$ is nonempty for any $\bm y\in F_{\bm x}$ with $F_{\bm x}\in\mathcal{F}(\delta\mathcal{A}_{\bm o})$. Namely, $F_{\bm x}\subseteq \mathcal{C}(I,J,K)$ for each $\bm x\in\mathcal{C}(I,J,K)$. Combining  \cite[Theorem 1.30]{Fillastre-Izmestiev2017}, this implies that $\mathcal{C}(I,J,K)$ is a polyhedral cone consisting of finitely many polyhedral cones like $F_{\bm x}$. Together with \autoref{Bounded-Unbounded}, we deduce that the convex polyhedra $P(\bm b,I,J,K)$ are bounded (unbounded, resp.) for all $\bm b\in\mathcal{C}(I,J,K)$ whenever $P(\bm a,I,J,K)$ is bounded (unbounded, resp.) for some $\bm a\in\mathcal{C}(I,J,K)$. We complete the proof.
\end{proof}
\subsection{Proof of \autoref{Normally-Combinatorially}}\label{Sec-3-2}
To prove \autoref{Normally-Combinatorially}, some basic properties of faces of convex polyhedra are required.
\begin{lemma}[\cite{Ewald1996}]\label{Face-Face}
The faces of a convex polyhedron $P$ have the following properties:
\begin{itemize}
\item [\i] If  $F_1,\ldots, F_k$ are the faces of $P$, then the intersection $F_1\cap \cdots\cap F_k$ is a face of $P$.
\item [\ii] If $F$ is a face of $P$, then $F$ is the intersection of all facets of $P$ containing $F$.
\end{itemize}
\end{lemma}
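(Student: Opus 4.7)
The plan is to treat the two parts separately, exploiting in each case a different description of faces: the support-function description of faces for part (i), and the half-space (irredundant) representation of $P$ for part (ii). Throughout I would use the convention that $\emptyset$ is a face, which takes care of degenerate intersections for free, and I would reduce to the case of nonempty intersections.

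For part (i), the strategy is to write each $F_\ell$ as $F(P,\bm u_\ell)=H_{\bm u_\ell,h_P(\bm u_\ell)}\cap P$ for some $\bm u_\ell\in\mathbb{R}^n$, which is possible by the definition of a face via supporting hyperplanes. Setting $\bm u:=\bm u_1+\cdots+\bm u_k$, I would claim $F_1\cap\cdots\cap F_k=F(P,\bm u)$. The inclusion $\subseteq$ is immediate: any $\bm x\in\bigcap_\ell F_\ell$ satisfies $\langle\bm u_\ell,\bm x\rangle=h_P(\bm u_\ell)$ for each $\ell$, hence $\langle\bm u,\bm x\rangle=\sum_\ell h_P(\bm u_\ell)$, and since $\langle\bm u,\bm y\rangle\le\sum_\ell h_P(\bm u_\ell)$ for every $\bm y\in P$, this forces $h_P(\bm u)=\sum_\ell h_P(\bm u_\ell)$ and $\bm x\in F(P,\bm u)$. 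The reverse inclusion uses the same identity: if $\bm x\in F(P,\bm u)$ then $\sum_\ell\langle\bm u_\ell,\bm x\rangle=\sum_\ell h_P(\bm u_\ell)$, and since each summand on the left is at most the corresponding one on the right, equality must hold termwise, giving $\bm x\in F_\ell$ for all $\ell$. This part is essentially a clean bookkeeping of support functions.

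For part (ii), I would fix an irredundant half-space representation $P=\bigcap_{i=1}^{m}H_{\bm w_i,b_i}^-$, in which each facet of $P$ has the form $G_i:=P\cap H_{\bm w_i,b_i}$ for a unique $i$. Given a face $F$ of $P$, define the active index set $I(F):=\{\,i\in[m]\mid F\subseteq H_{\bm w_i,b_i}\,\}$. The goal is to prove $F=\bigcap_{i\in I(F)}G_i$. The inclusion $F\subseteq\bigcap_{i\in I(F)}G_i$ is trivial. For the reverse, I would pick $\bm x\in\relint(F)$ and note that $I(F)$ is exactly the index set of inequalities that are tight at $\bm x$; a standard convexity argument then shows that the affine hull of $F$ is $\{\bm y:\langle\bm w_i,\bm y\rangle=b_i,\,i\in I(F)\}\cap\aff(P)$, so any $\bm y$ lying in $\bigcap_{i\in I(F)}G_i$ agrees with $\bm x$ on all active constraints and violates no inactive ones (by continuity, in a relative neighborhood of $\bm x$), which forces $\bm y\in F$ after a relative-interior/convex-combination argument. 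Alternatively, one can proceed by induction on $\dim P-\dim F$: if $F$ is a facet or $F=P$, the claim is trivial; otherwise, find a facet $G\supsetneq F$, observe that $F$ is a face of $G$, apply induction inside $G$ to write $F$ as an intersection of facets of $G$, and finally show each such facet of $G$ equals $G\cap G'$ for some facet $G'$ of $P$ (which is where part (i) is used in reverse: intersections of faces are faces, and the dimension count forces $G'$ to be a facet).

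The main obstacle is part (ii), specifically the step of lifting facets of a facet back to facets of $P$, or equivalently, ensuring that every active constraint is detected by some facet containing $F$. The subtlety is that several distinct active hyperplanes may cut out the same lower-dimensional face, so one must verify that the collection $\{G_i:i\in I(F)\}$ is rich enough to pin down $F$ and not merely a superset of it. This is where the irredundancy of the chosen half-space representation is essential, and where I expect the proof to require either a careful dimension-counting/relative-interior argument or an inductive descent down a maximal flag of faces $F\subsetneq F_1\subsetneq\cdots\subsetneq F_r=P$ with each $F_j$ a facet of $F_{j+1}$.
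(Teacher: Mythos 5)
The paper does not prove this lemma at all: it is quoted from Ewald's book as a known fact, so there is no in-paper argument to compare against. Your proposal is, in substance, the standard textbook proof and is correct. For part (i), the support-function computation with $\bm u=\bm u_1+\cdots+\bm u_k$ is exactly right; the only care needed is the degenerate cases ($F_\ell=P$ handled by $\bm u_\ell=\bm 0$, empty intersection handled by the convention that $\emptyset$ is a face), and you address both. For part (ii), the direct route you sketch does work, but the decisive step is the one you leave as a gesture: given $\bm y\in\bigcap_{i\in I(F)}G_i$ and $\bm x\in\relint(F)$, one shows $\bm z:=\bm x+\varepsilon(\bm x-\bm y)\in P$ for small $\varepsilon>0$ (active constraints stay tight, inactive ones stay strict), so $\bm x$ is a proper convex combination of $\bm y$ and $\bm z$; since $F=P\cap H_{\bm u,h_P(\bm u)}$ with $P\subseteq H_{\bm u,h_P(\bm u)}^-$, this forces $\langle\bm u,\bm y\rangle=h_P(\bm u)$ and hence $\bm y\in F$. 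Spelling that out would close the argument. Your stated ``main obstacle'' (lifting facets of a facet to facets of $P$) only concerns your alternative inductive route and is not needed for the direct proof; what irredundancy actually buys you is that $\{G_i\mid i\in I(F)\}$ is precisely the set of facets of $P$ containing $F$, so that $\bigcap_{i\in I(F)}G_i$ really is the intersection in the statement (with the usual reduction to $\aff(P)$ when $P$ is not full-dimensional, and the empty-intersection convention when $F=P$).
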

\begin{lemma}\label{Facet-Characterization}
Suppose $P(\bm a, I,J,K)\ne\emptyset$. If $F$ is a facet of $P(\bm a, I,J,K)$,  then $F=H_{\bm u_j,a_j}\cap P(\bm a, I,J,K)$ for some $j\in J\cup K$.
\end{lemma}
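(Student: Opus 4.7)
The plan is to argue by contradiction: assume $F$ is a facet of $P := P(\bm a,I,J,K)$ but $F \ne H_{\bm u_j,a_j}\cap P$ for every $j\in J\cup K$. Denote by $I^{\ast}=\{j\in J\cup K : P\subseteq H_{\bm u_j,a_j}\}$ the set of indices whose defining inequality is actually tight on all of $P$. For each $j\in (J\cup K)\setminus I^{\ast}$, since $P\subseteq H^{-}_{\bm u_j,a_j}$ (if $j\in J$) or $P\subseteq H^{+}_{\bm u_j,a_j}$ (if $j\in K$) while $P\not\subseteq H_{\bm u_j,a_j}$, the hyperplane $H_{\bm u_j,a_j}$ is a genuine supporting hyperplane and $H_{\bm u_j,a_j}\cap P$ is a proper face of $P$. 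Facets being maximal proper faces, if we had $F\subseteq H_{\bm u_j,a_j}$, then $F\subseteq H_{\bm u_j,a_j}\cap P\subsetneq P$ would force $F=H_{\bm u_j,a_j}\cap P$ by maximality, contradicting the standing assumption.

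Consequently, for every $j\in (J\cup K)\setminus I^{\ast}$ there exists $x_j\in F$ with $\langle\bm u_j,x_j\rangle<a_j$ (for $j\in J$) or $\langle\bm u_j,x_j\rangle>a_j$ (for $j\in K$). Let $x^{\ast}$ be the average of all these $x_j$. Convexity of $F$ gives $x^{\ast}\in F$; for each such $j$ the strict inequality is preserved under averaging, because the $j$-th summand contributes strictly and every other summand (being in $P$) contributes only the corresponding weak inequality. Therefore $x^{\ast}$ satisfies the equalities $\langle\bm u_i,x^{\ast}\rangle=a_i$ for $i\in I\cup I^{\ast}$ and the strict inequalities for all indices in $(J\cup K)\setminus I^{\ast}$. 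Since $I\cup I^{\ast}$ is precisely the set of constraints that are tight on all of $P$, this is exactly the description of $\relint(P)$, so $x^{\ast}\in F\cap\relint(P)$.

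To derive a contradiction, I would invoke the standard fact that a proper face of a convex polyhedron cannot contain a relative interior point of the polyhedron: writing $F=H\cap P$ with $P$ lying in a closed half-space of $H$, any $y\in P$ can be joined to $x^{\ast}\in\relint(P)$ by a segment which extends a little past $x^{\ast}$ while remaining in $P$; evaluating the defining linear functional of $H$ on the endpoint in $H$ and on the extension inside the half-space pins down $\langle\bm u, y\rangle=\langle\bm u,x^{\ast}\rangle$, so $P\subseteq H$ and $F=P$. This contradicts $F$ being a facet (hence a proper face).

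The main obstacle I expect is the bookkeeping in the middle step, namely verifying that after averaging no unintended equality is created and that $I\cup I^{\ast}$ is exactly the set of tight constraints on $P$; once this identification is in place, the relative-interior claim follows, and the contradiction is immediate.
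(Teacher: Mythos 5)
Your proposal is correct, but it takes a genuinely different route from the paper. The paper argues directly: after reducing (without loss of generality) to $I=\emptyset$ and $P$ full-dimensional, it picks $\bm x\in\relint(F)$, observes that $\bm x$ lies on the boundary $\partial P$, decomposes $\partial P$ as $\bigcup_{i\in[m]}\bigl(H_{\bm u_i,a_i}\cap P\bigr)$, and finishes with a dimension count showing $F\subseteq H_{\bm u_j,a_j}\cap P$ forces equality. You instead argue by contradiction: you isolate the implicit-equality set $I^{\ast}\subseteq J\cup K$, show that $F\not\subseteq H_{\bm u_j,a_j}$ for every $j\in(J\cup K)\setminus I^{\ast}$ (using maximality of a facet among proper faces), pick witnesses $x_j\in F$ violating the corresponding equality, average to obtain $x^{\ast}\in F$ satisfying every non-tight inequality strictly, identify $x^{\ast}\in\relint(P)$ via the standard implicit-equality description of the relative interior, and contradict the fact that a proper face is disjoint from $\relint(P)$. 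Your route buys a more self-contained treatment: it never invokes the paper's unstated WLOG reduction and instead handles the implicit-equality set head-on. The paper's route is shorter and positive (it produces the index $j$ directly).

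Two small points to tighten. First, if $(J\cup K)\setminus I^{\ast}=\emptyset$, your ``average of all these $x_j$'' is over an empty index set; you should say explicitly that in this degenerate case $P$ is an affine flat and therefore has no facets at all, so the claim is vacuous. Second, the identification of the relative interior you appeal to, namely
\[
\relint(P)=\bigl\{\bm x:\ U_{I\cup I^{\ast}}\bm x=\bm a_{I\cup I^{\ast}},\ \langle\bm u_j,\bm x\rangle<a_j\ (j\in J\setminus I^{\ast}),\ \langle\bm u_k,\bm x\rangle>a_k\ (k\in K\setminus I^{\ast})\bigr\},
\]
is indeed the standard characterization, but it deserves a citation or a one-line justification (e.g.\ that $\aff(P)$ is exactly the solution set of the implicit equalities), since it carries the whole argument.
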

\begin{proof}
Without loss of generality, suppose $I=\emptyset$ and $P(\bm a,\emptyset,J,K)$ is a full-dimensional convex polyhedron in $\mathbb{R}^n$. Let $\bm x$ be a relative interior of the facet $F$. Note the basic fact that $P(\bm a,\emptyset,J,K)$ is the disjoint union of the relative interiors of its faces. Then $\bm x$ is on the boundary of $P(\bm a,\emptyset,J,K)$. Since the boundary of $P(\bm a,\emptyset,J,K)$ is equal to $\bigcup_{i\in[m]}\big(H_{\bm u_i,a_i}\cap P(\bm a,\emptyset,J,K)\big)$, there is $j\in[m]$ satisfying $\bm x\in H_{\bm u_j,a_j}$. Assume $j\in J$. It follows from $P(\bm a,\emptyset,J,K)\subseteq H_{\bm u_j,a_j}^-$ that $H_{\bm u_j,a_j}\cap P(\bm a,\emptyset,J,K)$ is a face of $P(\bm a,\emptyset,J,K)$. Since $F$ is the smallest face of $P(\bm a,\emptyset,J,K)$ containing $\bm x$, $F\subseteq H_{\bm u_j,a_j}\cap P(\bm a,\emptyset,J,K)$. Then we have
\[
n-1=\dim(F)\le\dim\big(H_{\bm u_j,a_j}\cap P(\bm a,\emptyset,J,K)\big)\le\dim(H_{\bm u_j,a_j})=n-1.
\]
This means that $H_{\bm u_j,a_j}\cap P(\bm a,\emptyset,J,K)$ is a facet of $P(\bm a,\emptyset,J,K)$. So $F=H_{\bm u_j,a_j}\cap P(\bm a,\emptyset,J,K)$.
\end{proof}
Suppose $P(\bm a,I,J,K)\ne\emptyset$. For each $\bm x\in P(\bm a,I,J,K)$,  the {\em active index set} is given by
\[
I_{P(\bm a,I,J,K)}(\bm x):=\big\{i\in[m]\mid \bm x\in H_{\bm u_i,a_i}\big\}.
\]
If $\bm x$ and $\bm y$ are two relative interior points of a face $F$ of  $P(\bm a,I,J,K)$, then $I_{P(\bm a,I,J,K)}(\bm x)=I_{P(\bm a,I,J,K)}(\bm y)$ obviously. For this reason, we define the {\em active index set} $I_{P(\bm a,I,J,K)}(F)$ of each face $F\in\mathcal{F}\big(P(\bm a,I,J,K)\big)$ as
\begin{equation*}\label{Active-Index-Set}
I_{P(\bm a,I,J,K)}(F):=I_{P(\bm a,I,J,K)}(\bm x),\quad\forall\; \bm x\in\relint(F),
\end{equation*}
and denote by
\[
J_{P(\bm a,I,J,K)}(F):=J\setminus I_{P(\bm a,I,J,K)}(F)\quad\And\quad K_{P(\bm a,I,J,K)}(F):=K\setminus I_{P(\bm a,I,J,K)}(F).
\]
We call $\big(I_{P(\bm a,I,J,K)}(F),J_{P(\bm a,I,J,K)}(F),K_{P(\bm a,I,J,K)}(F)\big)$ an {\em active triple index} of the face $F$ induced by $P(\bm a,I,J,K)$. The following lemma shows that the active triple index completely characterizes a nonempty face $F$ of $P(\bm a,I,J,K)$, and also determines the dimension of $F$.

\begin{lemma}\label{Face-Characterization}
Let $P(\bm a,I,J,K)\ne\emptyset$ and $F$ be a nonempty face of $P(\bm a,I,J,K)$. Then we have
\[
F=P\big(\bm a,I_{P(\bm a,I,J,K)}(F),J_{P(\bm a,I,J,K)}(F),K_{P(\bm a,I,J,K)}(F)\big)
\]
and
\[
\dim(F)=n-\rank\big\{\bm u_i\mid i\in I_{P(\bm a,I,J,K)}(F)\big\}.
\]
\end{lemma}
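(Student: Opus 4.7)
The plan is to prove the set equality by double inclusion and then read off the dimension from the relative interior. Set $I' := I_{P(\bm a,I,J,K)}(F)$, $J' := J\setminus I'$, $K' := K\setminus I'$, and note at once that $I \subseteq I'$, since every point of $P(\bm a,I,J,K)$ lies on each $H_{\bm u_i,a_i}$ with $i \in I$; in particular $J'\subseteq J$ and $K'\subseteq K$.

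For the inclusion $F \subseteq P(\bm a,I',J',K')$, I would pick $\bm x_0 \in \relint(F)$: by definition of the active index set, $\bm x_0 \in H_{\bm u_i,a_i}$ for every $i \in I'$, and since an affine hyperplane meeting a convex set at a relative-interior point must contain the whole set, this forces $U_{I'}\bm x = \bm a_{I'}$ on all of $F$. The remaining weak inequalities follow from $F \subseteq P(\bm a,I,J,K)$ combined with $J'\subseteq J$ and $K'\subseteq K$. For the reverse inclusion I would invoke \autoref{Face-Face}(ii): $F$ is the intersection of all facets of $P(\bm a,I,J,K)$ containing $F$. By \autoref{Facet-Characterization}, each such facet has the form $H_{\bm u_j,a_j}\cap P(\bm a,I,J,K)$ for some $j \in J\cup K$, and it contains $F$ exactly when $F\subseteq H_{\bm u_j,a_j}$, i.e., when $j\in I'\cap(J\cup K)$. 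Hence
\[
F \;=\; P(\bm a,I,J,K)\;\cap\; \bigcap_{j\,\in\, I'\cap(J\cup K)} H_{\bm u_j,a_j} \;=\; P(\bm a,I',J',K'),
\]
where the last equality uses $[m] = I\sqcup J\sqcup K$ together with $I\subseteq I'$, so the equality constraints on the right are indexed precisely by $I\cup\bigl(I'\cap(J\cup K)\bigr)=I'$, while the leftover indices in $J$ and $K$ are exactly $J'$ and $K'$.

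For the dimension formula, the definition of the active index set forces $\relint(F) = \mathring{P}(\bm a,I',J',K')$: a point of $F$ lies in its relative interior iff its active index set equals $I'$ and no more. Thus $\relint(F)$ is a nonempty open subset of the affine subspace $\{\bm x\in\mathbb{R}^n : U_{I'}\bm x = \bm a_{I'}\}$, so $\aff(F) = \aff(\relint(F))$ coincides with that subspace, whose dimension is $n - \rank U_{I'} = n - \rank\{\bm u_i : i\in I'\}$.

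The main obstacle I anticipate is the bookkeeping in the reverse inclusion, namely reassembling $(I',J',K')$ from $I$ together with $I'\cap(J\cup K)$ and handling the degenerate case $F=P(\bm a,I,J,K)$, where $I'=I$ and no facet properly contains $F$ (the empty intersection of facets is taken as $P(\bm a,I,J,K)$ itself, which matches $P(\bm a,I',J',K')=P(\bm a,I,J,K)$). Every other ingredient is a routine application of the two cited lemmas and the standard identification of $\aff(F)$ with the affine hull of $\relint(F)$.
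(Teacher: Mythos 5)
Your proof is correct and follows essentially the same route as the paper's: forward inclusion by definition of the active index set, reverse inclusion via \autoref{Face-Face}(ii) and \autoref{Facet-Characterization}, and the dimension by observing that $\relint(F)$ sits openly inside the affine flat $\{U_{I'}\bm x=\bm a_{I'}\}$. One small caveat: the principle you invoke — ``an affine hyperplane meeting a convex set at a relative-interior point must contain the whole set'' — is false in general and requires the additional hypothesis that the convex set lies in one of the two closed half-spaces bounded by the hyperplane; this holds here since $F\subseteq P(\bm a,I,J,K)\subseteq H_{\bm u_i,a_i}^{\pm}$ for each $i$, but the hypothesis should be stated. Also, in the degenerate case $F=P(\bm a,I,J,K)$ one need not have $I'=I$ (implicit equalities can enlarge $I'$), though your identity $F=P(\bm a,I',J',K')$ still holds because intersecting with a hyperplane that already contains $P$ changes nothing.
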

\begin{proof}
Without loss of generality, suppose $I=\emptyset$ and $P(\bm a,\emptyset,J,K)$ is a full-dimensional convex polyhedron in $\mathbb{R}^n$. When $F=P$, the case is trivial. Next we consider that $F$ is a proper face. Since $F\subseteq H_{\bm u_i,a_i}$ for each $i\in I_{P(\bm a,\emptyset,J,K)}(F)$ and $F\subseteq P(\bm a,\emptyset,J,K)$,  we have
\[
F\subseteq \bigcap_{i\in I_{P(\bm a,\emptyset,J,K)}(F)}H_{\bm u_i,a_i}\bigcap P(\bm a,\emptyset,J,K).
\]
Then $H_{\bm u_i,a_i}\cap P(\bm a,\emptyset,J,K)\ne\emptyset$ for $i\in I_{P(\bm a,\emptyset,J,K)}(F)$. So $H_{\bm u_i,a_i}\cap P(\bm a,\emptyset,J,K)$ is a proper face of $P(\bm a,\emptyset,J,K)$ for each $i\in I_{P(\bm a,\emptyset,J,K)}(F)$. Let
\[
I_0=\big\{i\in I_{P(\bm a,\emptyset,J,K)}(F)\mid \dim\big(H_{\bm u_i,a_i}\cap P(\bm a,\emptyset,J,K)\big)=n-1 \big\}.
\]
According to  \autoref{Facet-Characterization}, we know that the set of all facets of $P(\bm a,\emptyset,J,K)$ containing $F$ is exactly the set $\big\{H_{\bm u_i,a_i}\cap P(\bm a,\emptyset,J,K)\mid i\in I_0\big\}$. This is a subset of $\big\{H_{\bm u_i,a_i}\cap P(\bm a,\emptyset,J,K)\mid i\in I_{P(\bm a,\emptyset,J,K)}(F)\big\}$.  Recall from the property (ii) in \autoref{Face-Face} that we have
\[
\bigcap_{i\in I_{P(\bm a,\emptyset,J,K)}(F)}H_{\bm u_i,a_i}\bigcap P(\bm a,\emptyset,J,K)\subseteq \bigcap_{i\in I_0}\big(H_{\bm u_i,a_i}\cap P(\bm a,\emptyset,J,K)\big)=F.
\]
So $F=\bigcap_{i\in I_{P(\bm a,\emptyset,J,K)}(F)}H_{\bm u_i,a_i}\bigcap P(\bm a,\emptyset,J,K)$. This implies that $F$ can be written in the form
\[
F=P\big(\bm a,I_{P(\bm a,\emptyset,J,K)}(F),J_{P(\bm a,\emptyset,J,K)}(F),K_{P(\bm a,\emptyset,J,K)}(F)\big).
\]
Since $\big(\bigcap_{j\in J_{P(\bm a,\emptyset,J,K)}(F)}\mathring{H}_{\bm u_j,a_j}^-\big)\bigcap\big(\bigcap_{i\in K_{P(\bm a,\emptyset,J,K)}(F)}\mathring{H}_{\bm u_i,a_i}^+\big)$ is an $n$-dimensional open convex polyhedron, we arrive at $\dim(F)=\dim\big(\bigcap_{i\in I_{P(\bm a,\emptyset,J,K)}(F)}H_{\bm u_i,a_i}\big)$. Immediately, we have
\begin{equation*}
\dim(F)=n-\rank\big\{\bm u_i\mid i\in I_{P(\bm a,\emptyset,J,K)}(F)\big\}
\end{equation*}
via the elementary linear algebra. This completes the proof.
\end{proof}

When vectors $\bm a$ and $\bm b$ are from the same open face of the derived arrangement $\delta\mathcal{A}_{\bm o}$, we can establish a one-to-one correspondence between the faces of $P(\bm a,I,J,K)$ and $P(\bm b,I,J,K)$.  This is a key property to classify the normally equivalent convex polyhedra.
\begin{lemma}\label{Face-Bijection}
Let $F\in\mathcal{F}(\delta\mathcal{A}_{\bm o})$ and $\bm a,\bm b\in\relint(F)$. Then the map $\Phi: \mathcal{F}\big(P(\bm a,I,J,K)\big)\to\mathcal{F}\big(P(\bm b,I,J,K)\big)$ defined as $\Phi(\emptyset)=\emptyset$ and
 \[
G\mapsto\Phi(G)=P\big(\bm b,I_{P(\bm a,I,J,K)}(G),J_{P(\bm a,I,J,K)}(G),K_{P(\bm a,I,J,K)}(G)\big),\;\forall G\in\mathcal{F}\big(P(\bm a,I,J,K)\big)\setminus\{\emptyset\},
\]
is a bijection. Moreover, $G$ and $\Phi(G)$ have the same active triple index for each nonempty face $G$ of  $P(\bm a,I,J,K)$.
\end{lemma}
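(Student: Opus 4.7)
My plan is to verify three things in sequence: that $\Phi(G)$ is a nonempty face of $P(\bm b,I,J,K)$ for each nonempty face $G$, that $G$ and $\Phi(G)$ share the same active triple index, and that $\Phi$ is a bijection. The three ingredients \autoref{Nonempty}, \autoref{Nonempty-Open}, and \autoref{Face-Characterization}, applied to the active triple $(I_G,J_G,K_G)$ rather than the original $(I,J,K)$, will do most of the work.

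First I will transfer the relative interior. Fix a nonempty $G\in\mathcal{F}(P(\bm a,I,J,K))$ with active triple index $(I_G,J_G,K_G)$, where I abbreviate $I_G=I_{P(\bm a,I,J,K)}(G)$ and similarly for $J_G,K_G$. \autoref{Face-Characterization} gives $G=P(\bm a,I_G,J_G,K_G)$. For any $\bm x\in\relint(G)$, the active index set of $\bm x$ with respect to $P(\bm a,I,J,K)$ is exactly $I_G$ by definition, so the inequalities indexed by $J_G\cup K_G$ are all strict at $\bm x$; equivalently $\bm x\in\mathring{P}(\bm a,I_G,J_G,K_G)$. Hence $\mathring{P}(\bm a,I_G,J_G,K_G)\ne\emptyset$, and since $\bm a,\bm b\in\relint(F)$ so that $F=F_{\bm a}$, \autoref{Nonempty-Open} yields a point $\bm y\in\mathring{P}(\bm b,I_G,J_G,K_G)$; in particular $\Phi(G)=P(\bm b,I_G,J_G,K_G)\ne\emptyset$.

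Next I will promote $\Phi(G)$ to a face of $P(\bm b,I,J,K)$ with the prescribed active triple index. Because $I\subseteq I_G$, $J_G\subseteq J$, and $K_G\subseteq K$, the point $\bm y$ satisfies the defining equalities and inequalities of $P(\bm b,I,J,K)$, so $\bm y\in P(\bm b,I,J,K)$; its active index set there is precisely $I_G$ since the strict inequalities hold exactly on $J_G$ and $K_G$. The unique face $G'$ of $P(\bm b,I,J,K)$ whose relative interior contains $\bm y$ therefore has active triple index $(I_G,J_G,K_G)$, and \autoref{Face-Characterization} identifies $G'$ with $P(\bm b,I_G,J_G,K_G)=\Phi(G)$. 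This simultaneously places $\Phi(G)$ in $\mathcal{F}(P(\bm b,I,J,K))$ and confirms the claim on active triple indices.

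Bijectivity is then a symmetry argument. Exchanging the roles of $\bm a$ and $\bm b$ produces a map $\Psi:\mathcal{F}(P(\bm b,I,J,K))\to\mathcal{F}(P(\bm a,I,J,K))$ of the same form. Using the active-triple-index preservation just established, $\Psi(\Phi(G))=P(\bm a,I_G,J_G,K_G)=G$ on nonempty faces, and symmetrically $\Phi\circ\Psi=\id$; the convention $\Phi(\emptyset)=\Psi(\emptyset)=\emptyset$ covers the empty face. The main technical subtlety is the inclusion $\relint(G)\subseteq\mathring{P}(\bm a,I_G,J_G,K_G)$ that licenses the use of \autoref{Nonempty-Open}; once this is in hand, everything else is essentially bookkeeping on active index sets together with appeals to the earlier lemmas.
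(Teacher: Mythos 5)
Your proof is correct, and it takes a genuinely different (and arguably cleaner) route than the paper's. Both arguments rest on \autoref{Nonempty-Open} and \autoref{Face-Characterization}, but they use them differently. The paper first shows $\Phi(G)\neq\emptyset$ via \autoref{Nonempty}, observes the inclusion $I_{P(\bm a,I,J,K)}(G)\subseteq I_{P(\bm b,I,J,K)}(\Phi(G))$, and then rules out a strict inclusion by a dimension-counting contradiction: a hypothetical extra active index would force $\dim(\Phi(G))\le\dim(G)-1$, while a separate application of \autoref{Nonempty-Open} and \autoref{Face-Characterization} gives $\dim(\Phi(G))\ge\dim(G)$. You instead apply \autoref{Nonempty-Open} once, directly extracting a witness $\bm y\in\mathring{P}(\bm b,I_G,J_G,K_G)$; since the defining strict inequalities pin down $\bm y$'s active index set in $P(\bm b,I,J,K)$ to exactly $I_G$, the unique face of $P(\bm b,I,J,K)$ containing $\bm y$ in its relative interior is immediately seen (via \autoref{Face-Characterization}) to coincide with $\Phi(G)$ and to carry the prescribed active triple index. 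This simultaneously delivers well-definedness, nonemptiness, and the active-triple-index equality in one stroke, bypassing the paper's dimension bookkeeping; the bijectivity-by-symmetry closing is essentially the same in both. One advantage of your approach is that it does not implicitly rely on $\bm u_{i_0}$ adding rank to $\{\bm u_i:i\in I_G\}$ in the step $\dim(\Phi(G))\le\dim(G)-1$, a point the paper leaves unelaborated.
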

\begin{proof}
According to \autoref{Nonempty}, $P(\bm a,I,J,K)$ and $P(\bm b,I,J,K)$ are either both empty or nonempty sets. When $P(\bm a,I,J,K)$ and $P(\bm b,I,J,K)$ are empty sets,  there is nothing to prove.  Suppose $P(\bm a,I,J,K)\ne\emptyset$. In this case, we will prove that $\Phi$ is well-defined, injective and surjective in turn. Clearly we have  $\Phi(G)=\bigcap_{i\in  I_{P(\bm a,I,J,K)}(G)}\big(H_{\bm u_i,b_i}\cap P(\bm b,I,J,K)\big)$. Since $H_{\bm u_i,b_i}\cap P(\bm b,I,J,K)$ is a face of $P(\bm b,I,J,K)$ for each $i\in  I_{P(\bm a,I,J,K)}(G)$,  $\Phi(G)$ is a face of $P(\bm b,I,J,K)$ via \i in \autoref{Face-Face}. Namely, $\Phi$ is well-defined.

To verify the injectivity of $\Phi$, we first show that $\Phi(G)\ne\emptyset$ whenever $G$ is a nonempty face of $P(\bm a,I,J,K)$. From \autoref{Face-Characterization}, a nonempty face $G$ of $P(\bm a,I,J,K)$ can be expressed as $G=P\big(\bm a,I_{P(\bm a,I,J,K)}(G),J_{P(\bm a,I,J,K)}(G),K_{P(\bm a,I,J,K)}(G)\big)$. Using \autoref{Nonempty}, we obtain $\Phi(G)\ne\emptyset$. Before proceeding further, we  need to prove
\begin{equation}\label{A=B}
I_{P(\bm b,I,J,K)}\big(\Phi(G)\big)=I_{P(\bm a,I,J,K)}(G).
\end{equation}
Noting $I_{P(\bm a,I,J,K)}(G)\subseteq I_{P(\bm b,I,J,K)}\big(\Phi(G)\big)$, we show the opposite inclusion by negation. Suppose there is $i_0\in I_{P(\bm b,I,J,K)}\big(\Phi(G)\big)\setminus I_{P(\bm a,I,J,K)}(G)$. Combining \autoref{Face-Characterization}, we deduce
\[
\dim(\Phi(G))\le\dim(G)-1.
\]
Since $\relint(G)=\mathring{P}\big(\bm a,I_{P(\bm a,I,J,K)}(G),J_{P(\bm a,I,J,K)}(G),K_{P(\bm a,I,J,K)}(G)\big)$ is nonempty set, we have $\mathring{P}\big(\bm b,I_{P(\bm a,I,J,K)}(G),J_{P(\bm a,I,J,K)}(G),K_{P(\bm a,I,J,K)}(G)\big)$ is nonempty by \autoref{Nonempty-Open}.  Based on the fundamental topological theory, we further obtain
\begin{align*}
\dim\big(\Phi(G)\big)&\ge\dim\big(\mathring{P}\big(\bm b,I_{P(\bm a,I,J,K)}(G),J_{P(\bm a,I,J,K)}(G),K_{P(\bm a,I,J,K)}(G)\big)\big)\\
&=\dim\Big(\bigcap_{i\in I_{P(\bm a,I,J,K)}(G)}\Big)H_{\bm u_i,b_i}\\
&=n-\rank\big\{\bm u_i\mid i\in I_{P(\bm a,I,J,K)}(G)\big\}.
\end{align*}
Together with \autoref{Face-Characterization}, this means
\[
\dim\big(\Phi(G)\big)\ge\dim(G),
\]
a contradiction. So we have proved \eqref{A=B}. Taking another nonempty face $G'$ of $P(\bm a,I,J,K)$, obviously $I_{P(\bm a,I,J,K)}(G)\ne I_{P(\bm a,I,J,K)}(G')$. It follows from \eqref{A=B} that $\Phi$ is an injection.

For the surjectivity, let us first define the map $\Phi^{-1}$ as $\Phi^{-1}(\emptyset)=\emptyset$ and
 \[
\Phi^{-1}(G)=P\big(\bm a,I_{P(\bm b,I,J,K)}(G),J_{P(\bm b,I,J,K)}(G),K_{P(\bm b,I,J,K)}(G)\big),\;\forall\, G\in\mathcal{F}\big(P(\bm b,I,J,K)\big)\setminus\{\emptyset\}.
\]
Using the same argument as the map $\Phi$, we can show that $\Phi^{-1}$ is injective and
\[
I_{P(\bm a,I,J,K)}\big(\Phi^{-1}(G)\big)=I_{P(\bm b,I,J,K)}(G),\quad\forall\,  G\in\mathcal{F}\big(P(\bm b,I,J,K)\big)\setminus\{\emptyset\}.
\]
Together with \eqref{A=B}, we have $\Phi\big(\Phi^{-1}(G)\big)=G$ for any face $G$ of $P(\bm b,I,J,K)$. Hence, $\Phi$ is surjective. Up to now we have shown that $\Phi$ is a bijection.

Moreover, \eqref{A=B}  indicates that $G$ and $\Phi(G)$ have the same active triple index from the definition of the active triple index. We complete the proof.
\end{proof}

It is worth remarking that the polyhedral cone can be expressed as the {\em positive hull}
\[
\pos\{\bm u_1,\ldots,\bm u_m\}:=\Big\{\sum_{i=1}^{m}\lambda_i\bm u_i\mid\lambda_i\ge 0\Big\}.
\]
By convention, define $\pos\{\emptyset\}=\bm o$. Generally, we denote by $\Span\{\bm u_1,\ldots,\bm u_m\}$ the vector space consisting of all linear combinations of these vectors $\bm u_1,\ldots,\bm u_m$. The following is a formula to calculate the normal cones of convex polyhedra, which can be founded in \cite[6.46 Theorem]{Rockafellar1998}.
\begin{lemma}[\cite{Rockafellar1998}, 6.46 Theorem]\label{Normal-Cone1}
Suppose $P(\bm a,I,J,K)\ne \emptyset$. Then, for any nonempty face $F$ of $P(\bm a,I,J,K)$,
\[
N_{P(\bm a,I,J,K)}(F)=\pos\big\{\bm u_j,-\bm u_k\mid  j\in I_{P(\bm a,I,J,K)}(F)\cap J,k\in I_{P(\bm a,I,J,K)}(F)\cap K\big\}+\Span\big\{\bm u_i\mid i\in I\big\}.
\]
\end{lemma}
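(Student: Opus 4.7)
The plan is to fix $\bm x\in\relint(F)$ so that $N_{P(\bm a,I,J,K)}(F)=N_{P(\bm a,I,J,K)}(\bm x)$, then reinterpret membership in this normal cone via a Farkas-type duality using Proposition 2.2. By the definition of the normal cone, $\bm u\in N_{P(\bm a,I,J,K)}(\bm x)$ means $\bm x$ maximizes $\langle\bm u,\cdot\rangle$ on $P(\bm a,I,J,K)$, i.e.\ $\langle\bm u,\bm y-\bm x\rangle\le 0$ for every $\bm y\in P(\bm a,I,J,K)$. Because $\bm x\in\relint(F)$, every non-active inequality at $\bm x$ is strict, so a sufficiently small ball around $\bm x$ intersected with $P(\bm a,I,J,K)$ is a translate of a neighborhood of the origin in the tangent cone
\[
T := \{\bm z\in\mathbb{R}^n \mid U_I\bm z=\bm 0,\ \bm u_j\cdot\bm z\le 0\ \forall\, j\in I_{P(\bm a,I,J,K)}(F)\cap J,\ \bm u_k\cdot\bm z\ge 0\ \forall\, k\in I_{P(\bm a,I,J,K)}(F)\cap K\}.
\]
It follows that $N_{P(\bm a,I,J,K)}(\bm x)$ equals the polar cone $T^\circ=\{\bm u:\langle\bm u,\bm z\rangle\le 0\ \forall\,\bm z\in T\}$, and the lemma reduces to the polar-cone identity $T^\circ=\pos\{\bm u_j,-\bm u_k:\ldots\}+\Span\{\bm u_i:i\in I\}$.

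The inclusion $\supseteq$ is a direct verification: if $\bm u = \sum_{i\in I}\lambda_i\bm u_i + \sum_{j\in I_{P}(F)\cap J}\mu_j\bm u_j - \sum_{k\in I_{P}(F)\cap K}\nu_k\bm u_k$ with $\mu_j,\nu_k\ge 0$, expand $\langle\bm u,\bm z\rangle$ termwise for $\bm z\in T$ and the defining sign/equality constraints of $T$ give $\langle\bm u,\bm z\rangle\le 0$. For the nontrivial inclusion $\subseteq$, given $\bm u\in T^\circ$, I would argue that the system
\[
U_I\bm z=\bm 0,\ U_{I_{P}(F)\cap J}\bm z\le\bm 0,\ U_{I_{P}(F)\cap K}\bm z\ge\bm 0,\ -\langle\bm u,\bm z\rangle<0
\]
is infeasible in $\bm z$ and apply Proposition 2.2 with $A_1=U_I$, $A_2=U_{I_{P}(F)\cap J}$, $A_3=-\bm u^{\mathsf T}$, $A_4=U_{I_{P}(F)\cap K}$ (and all right-hand sides zero). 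The dual alternative supplies row multipliers $\bm y_1$ (free), $\bm y_2\ge\bm 0$, $\bm y_3\ge 0$, $\bm y_4\le\bm 0$ with $\bm y_1U_I+\bm y_2U_{I_{P}(F)\cap J}-\bm y_3\bm u^{\mathsf T}-\bm y_4U_{I_{P}(F)\cap K}=\bm 0$, and since all right-hand sides vanish the proposition forces $\bm y_3\ne 0$; hence $\bm y_3>0$, and dividing by $\bm y_3$ yields exactly the desired representation of $\bm u$ with non-negative coefficients on the $\bm u_j$ and $-\bm u_k$ and free coefficients on the $\bm u_i$, $i\in I$.

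The main obstacle will be the sign and normalization bookkeeping inside Proposition 2.2: I must arrange the blocks so that the only way the dual alternative can be satisfied (given all zero right-hand sides) is through the strict-inequality branch $\bm y_3\ne 0$, guaranteeing a strictly positive coefficient on $\bm u$ that can be used to rescale the combination and recover $\bm u$ itself rather than a positive multiple of it. Once this normalization is in place, comparing coefficients row-by-row delivers the conic-plus-linear decomposition and completes both inclusions of the claimed normal-cone formula.
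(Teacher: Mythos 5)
Your proof is correct, but note that the paper does not prove this lemma at all: it is quoted verbatim as Theorem 6.46 of Rockafellar--Wets, so there is no internal argument to compare against. What you have supplied is a legitimate self-contained derivation, and it has the virtue of using only tools already in the paper: the identification $N_{P(\bm a,I,J,K)}(\bm x)=T^{\circ}$ for $\bm x\in\relint(F)$ (valid because $P(\bm a,I,J,K)-\bm x\subseteq T$ globally, while the non-active constraints are strict at $\bm x$ so that $\bm x+\varepsilon\bm z\in P(\bm a,I,J,K)$ for every unit $\bm z\in T$ and small $\varepsilon$), followed by the polar-cone computation via \autoref{General-Farkas-lemma}. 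Your bookkeeping in the dual alternative is right: with all right-hand sides zero the branch $\sum_i\langle\bm y_i,\bm a_i\rangle<0$ is unavailable, so the proposition forces $\bm y_3\ne 0$, hence $\bm y_3>0$, and rescaling recovers $\bm u$ itself with free coefficients on $\{\bm u_i\mid i\in I\}$ and the correct signs on $\bm u_j$ and $-\bm u_k$. Two small caveats: first, you are reading \autoref{General-Farkas-lemma} in its corrected form --- the displayed statement says $\bm y_i\ge\bm 0$ for $i=1,2$, but its proof actually produces $\bm y_1$ free and $\bm y_2,\bm y_3\ge\bm 0$, which is the version you need for the equality block $U_I$ to contribute a full $\Span$ rather than a $\pos$; second, the local identification of $P(\bm a,I,J,K)$ with $\bm x+T$ near $\bm x$ deserves one explicit sentence (uniform strictness of the inactive constraints on a compact neighborhood), though this is standard for polyhedra. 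Neither point is a gap in substance.
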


In 2000, Kim and Luc \cite{Kim-Luc2000} builded the relationship between faces and  normal cones of  convex polyhedra.
\begin{lemma}[\cite{Kim-Luc2000}, Proposition 3.4]\label{Normal-Polyhedral-Face}
Let $P(\bm a,I,J,K)\ne\emptyset$, and $F_1,F_2$ be nonempty faces of $P(\bm a,I,J,K)$. Then $F_1$ is a face of $F_2$ if and only if $N_{P(\bm a,I,J,K)}(F_2)$ is a face of $N_{P(\bm a,I,J,K)}(F_1)$.
\end{lemma}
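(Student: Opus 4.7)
The plan is to leverage the explicit descriptions of faces and normal cones of $P := P(\bm a, I, J, K)$ from \autoref{Face-Characterization} and \autoref{Normal-Cone1}, in which every nonempty face $F$ is encoded by its active triple index $(I_F, J_F, K_F)$ and
\[
N_P(F) = \pos\bigl\{\bm u_j,\, -\bm u_k : j \in I_F \cap J,\ k \in I_F \cap K\bigr\} + \Span\{\bm u_i : i \in I\}.
\]
As a preliminary step I would record the order-reversal $F_1 \subseteq F_2 \Longleftrightarrow I_{F_2} \subseteq I_{F_1}$: the forward direction follows by choosing $\bm x \in \relint(F_1) \subseteq F_2$ and noting that every hyperplane $H_{\bm u_i, a_i}$ containing $F_2$ also contains $\bm x$; the converse is direct from the defining constraints in \autoref{Face-Characterization}. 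Combined with the cone formula, this already yields $N_P(F_2) \subseteq N_P(F_1)$ whenever $F_1 \subseteq F_2$.

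The heart of the forward direction of the lemma is to upgrade this set-theoretic containment to a face relation. I would exhibit an explicit supporting linear functional: fix $\bm x_0 \in \relint(F_2)$ and $\bm x_1 \in F_1$, and set $\varphi(\bm u) := \langle \bm u, \bm x_1 - \bm x_0\rangle$. Since $\bm x_1 \in H_{\bm u_i, a_i}$ for every $i \in I_{F_1}$, while the defining relations of $F_2$ are satisfied strictly at $\bm x_0$ exactly outside $I_{F_2}$, a direct evaluation on the generators of $N_P(F_1)$ yields $\varphi(\bm u_j) \ge 0$ for $j \in I_{F_1} \cap J$ with equality iff $j \in I_{F_2}$, $\varphi(-\bm u_k) \ge 0$ for $k \in I_{F_1} \cap K$ with equality iff $k \in I_{F_2}$, and $\varphi(\bm u_i) = 0$ for $i \in I$. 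Therefore $\varphi \ge 0$ on $N_P(F_1)$ with zero locus precisely $N_P(F_2)$, so $N_P(F_2) = N_P(F_1) \cap \ker\varphi$ is a face.

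For the converse, suppose $N_P(F_2)$ is a face of $N_P(F_1)$, so in particular $N_P(F_2) \subseteq N_P(F_1)$. Picking $\bm u^* \in \relint(N_P(F_2))$, a short argument using the same cone formula (with strict positivity of the coefficients on all generators) shows $F(P, \bm u^*) = F_2$; then $\bm u^* \in N_P(F_1)$ forces $F_1 \subseteq F(P, \bm u^*) = F_2$, and since any face of $P$ contained in $F_2$ is itself a face of $F_2$, the proof is complete. The main obstacle I anticipate is the sign and equality bookkeeping in the forward direction: the ranges $I_{F_2} \cap J$, $(I_{F_1} \setminus I_{F_2}) \cap J$, and $J \setminus I_{F_1}$ (and symmetrically for $K$) each need to be treated separately, using precisely the closed-versus-relatively-open distinction between $F_2$ and its relative interior that is encoded by the active triple index.
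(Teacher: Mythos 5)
The paper does not prove this lemma at all; it is invoked verbatim from \cite{Kim-Luc2000}, Proposition 3.4, so there is no internal argument to compare against. Your self-contained derivation from \autoref{Face-Characterization} and \autoref{Normal-Cone1} is correct in its essentials and is a reasonable alternative to the citation. In the forward direction, your functional $\varphi(\bm u)=\langle\bm u,\bm x_1-\bm x_0\rangle$ does the job; the one step that deserves to be spelled out is that for a general $\bm u\in N_P(F_1)\cap\ker\varphi$, writing $\bm u=\sum_j\lambda_j\bm u_j+\sum_k\mu_k(-\bm u_k)+\bm v$ with $\lambda_j,\mu_k\ge 0$ and $\bm v\in\Span\{\bm u_i:i\in I\}$, the identity $0=\varphi(\bm u)=\sum_j\lambda_j\varphi(\bm u_j)+\sum_k\mu_k\varphi(-\bm u_k)$ with every summand nonnegative forces the coefficient of each generator indexed outside $I_{F_2}$ to vanish, which pins the zero locus to exactly $N_P(F_2)$ --- precisely the bookkeeping you flagged at the end. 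In the converse direction you appeal to the fact that a relative-interior point of a normal cone recovers its face; that is true but would itself need justification. It is cleaner to take the explicit point $\bm u^*=\sum_{j\in I_{F_2}\cap J}\bm u_j-\sum_{k\in I_{F_2}\cap K}\bm u_k\in N_P(F_2)\subseteq N_P(F_1)$: for any $\bm x\in P$ the defining inequalities give $\langle\bm u^*,\bm x\rangle\le\sum_{j}a_j-\sum_{k}a_k=h_P(\bm u^*)$, with equality exactly when $\bm x$ lies on every hyperplane indexed by $I_{F_2}$, so $F(P,\bm u^*)=F_2$ by \autoref{Face-Characterization}, and $\bm u^*\in N_P(F_1)$ then yields $F_1\subseteq F(P,\bm u^*)=F_2$. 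This removes the only unjustified step and shows, as you noticed, that the converse already follows from the weaker hypothesis $N_P(F_2)\subseteq N_P(F_1)$.
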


The next proposition tells us that the normal equivalence is a stronger relation than the combinatorial equivalence for convex polyhedra.
\begin{proposition}\label{Both-Equivalent-Relationship}
If two convex polyhedra $P$ and $Q$ in $\mathbb{R}^n$ are normally equivalent, then they are combinatorially equivalent.
\end{proposition}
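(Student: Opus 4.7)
The plan is to build an explicit order isomorphism $\Psi\colon \mathcal{F}(P)\to\mathcal{F}(Q)$ by routing through the common normal fan. First, for any convex polyhedron $R$ in $\mathbb{R}^n$, I would define the map
\[
\Phi_R\colon\mathcal{F}(R)\setminus\{\emptyset\}\longrightarrow\mathcal{N}(R),\qquad F\longmapsto N_R(F).
\]
By the very definition of the normal fan $\mathcal{N}(R)$ as the collection of normal cones at nonempty faces, $\Phi_R$ is surjective. The core step is to check that $\Phi_R$ is a bijection which reverses the partial orders. This is exactly the content of \autoref{Normal-Polyhedral-Face}: for nonempty faces $F_1,F_2$ of $R$, one has $F_1\subseteq F_2$ if and only if $N_R(F_2)$ is a face of $N_R(F_1)$. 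Injectivity then falls out by antisymmetry, since $N_R(F_1)=N_R(F_2)$ forces each to be a face of the other and hence $F_1=F_2$.

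Having established that $\Phi_P$ and $\Phi_Q$ are order-reversing bijections onto their respective normal fans, I would invoke the hypothesis $\mathcal{N}(P)=\mathcal{N}(Q)$ to form the composition
\[
\Psi:=\Phi_Q^{-1}\circ\Phi_P\colon\mathcal{F}(P)\setminus\{\emptyset\}\longrightarrow\mathcal{F}(Q)\setminus\{\emptyset\}.
\]
Being a composition of two order-reversing bijections, $\Psi$ is an order-preserving bijection. Extending by $\Psi(\emptyset)=\emptyset$ yields a poset isomorphism $\mathcal{F}(P)\cong\mathcal{F}(Q)$, which is exactly the combinatorial equivalence of $P$ and $Q$.

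I do not anticipate a serious obstacle: every nontrivial piece is already available in \autoref{Normal-Polyhedral-Face}, and the composition step is formal. The only point that warrants a sentence of care is the injectivity of $\Phi_R$, and even there the argument is a one-line antisymmetry deduction from the cited lemma. Thus the proof is essentially a transport of structure along the normal-cone assignment, with \autoref{Normal-Polyhedral-Face} doing all the geometric work.
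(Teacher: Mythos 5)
Your argument is correct and takes essentially the same route as the paper: the paper also takes the bijection $\mathcal{F}(P)\setminus\{\emptyset\}\to\mathcal{F}(Q)\setminus\{\emptyset\}$ induced by matching faces with equal normal cones and then invokes \autoref{Normal-Polyhedral-Face} to transfer the inclusion order. The only difference is that you spell out (via $\Phi_R$ and an antisymmetry argument) why the normal-cone assignment is bijective, which the paper takes as immediate from $\mathcal{N}(P)=\mathcal{N}(Q)$.
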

\begin{proof}
According to the definition of the normally equivalent polyhedra in \autoref{Two-Equivalence-Polyhedra}, there is a natural bijection $\Phi:\mathcal{F}(P)\to\mathcal{F}(Q)$ satisfying $\Phi(\emptyset)=\emptyset$
and $N_P(F)=N_Q\big(\Phi(F)\big)$ for any nonempty face $F$ of $P$. Given faces $F,G\in\mathcal{F}(P)\setminus\{\emptyset\}$ with $F\subseteq G$, it follows from \autoref{Normal-Polyhedral-Face} that $\Phi(F)\subseteq\Phi(G)$. Hence, $P$ and $Q$ are combinatorially equivalent.
\end{proof}
Conversely, although convex polyhedra are combinatorially equivalent, they are not necessarily normally equivalent. For example, let $P=\big\{(x,y)\in\mathbb{R}^2\mid x\ge 0,\,y\ge 0\big\}$ and $Q=\big\{(x,y)\in\mathbb{R}^2\mid x\ge 0,\,x-y\ge 0\big\}$ be convex polyhedra in $\mathbb{R}^2$. Clearly $\mathcal{F}(P)\cong\mathcal{F}(Q)$, but $P$ and $Q$ are not normally equivalent. We now return to verifying \autoref{Normally-Combinatorially}.
\begin{proof}[Proof of \autoref{Normally-Combinatorially}]
Applying \autoref{Nonempty}, $P(\bm a,I,J,K)$ and $P(\bm b,I,J,K)$ are either both empty or nonempty sets. When $P(\bm a,I,J,K)$ and $P(\bm b,I,J,K)$ are empty sets, the case is trivial. Suppose $P(\bm a,I,J,K)\ne\emptyset$. According to the bijection $\Phi: \mathcal{F}\big(P(\bm a,I,J,K)\big)\to\mathcal{F}\big(P(\bm b,I,J,K)\big)$ in \autoref{Face-Bijection}, we know that for each nonempty face $G$ of $P(\bm a,I,J,K)$, $G$ and $\Phi(G)$ have the same active index set. Together with \autoref{Normal-Cone1}, we further arrive at that
$P(\bm a,I,J,K)$ and $P(\bm b,I,J,K)$ have the same normal fan. Namely, $P(\bm a,I,J,K)$ and $P(\bm b,I,J,K)$ are normally equivalent. With \autoref{Both-Equivalent-Relationship}, we directly deduce that $P(\bm a,I,J,K)$ and $P(\bm b,I,J,K)$ are combinatorially equivalent.
\end{proof}

At the end of the section, let us quickly look at a small example of the extended deformations of convex polyhedra.
\begin{example}\label{Example1}
{\rm
Let $P(\bm a,I,J,K)$ be a polytope in $\mathbb{R}^2$ given by the following inequality systems, where $\bm a=(0,1,0,1)$, $I=K=\emptyset$ and $J=[4]$.
\begin{figure}[H]
\centering
\begin{tikzpicture}[scale=1,line width=1pt]
\draw (-6,0.75) node[black]
{$
\bordermatrix{
              &     &\cr
\bm u_1 & -1 &0 \cr
\bm u_2 & 0 & 1  \cr
\bm u_3 &0 & -1 \cr
\bm u_4 & 1 & 1
}
\bordermatrix{
& \cr & x\cr & y
}\le\bordermatrix{
& \cr & 0\cr & 1\cr & 0\cr &1
}
$};
\draw (-6,-1.75) node[black] {$P(\bm a,I,J,K)$};

\draw [black,-](-1,0) -- (2,0) node [right, black] {$y=0$};
\draw [black,-](-1,1) -- (2,1) node [right, black] {$y=1$};
\draw [black,-](0,-1) -- (0,2) node [above, black] {$x=0$};
\draw [black,-](-1,2) -- (2,-1) node [right, black] {$x+y=1$};
\filldraw[red] (0,0)--(1,0)--(0,1)--(0,0);
\draw (0.75,-1.75) node[black] {$\bm a=(0,1,0,1)$};
\end{tikzpicture}
\end{figure}

Three distinct extended deformations of $P(\bm a,I,J,K)$ are illustrated as red faces below.
\begin{figure}[H]
\centering
\begin{tikzpicture}[scale=0.85,line width=1pt]
\draw [black,-](-1,0) -- (2,0) node [right, black] {$y=0$};
\draw [black,-](-1,1.5) -- (2,1.5) node [right, black] {$y=1.5$};
\draw [black,-](0,-1) -- (0,2) node [above, black] {$x=0$};
\draw [black,-](-1,2) -- (2,-1) node [right, black] {$x+y=1$};
\draw (0.75,-1.75) node[black] {$\bm b_1=(0,1.5,0,1)$};

\filldraw[red] (0,0)--(1,0)--(0,1)--(0,0);
\draw [black,-](5,0) -- (8,0) node [right, black] {$y=0$};
\draw [black,-](5,0.75) -- (8,0.75) node [right, black] {$y=0.75$};
\draw [black,-](6,-1) -- (6,2) node [above, black] {$x=0$};
\draw [black,-](5,2) -- (8,-1) node [right, black] {$x+y=1$};
\filldraw[red] (6,0)--(7,0)--(6.25,0.75)--(6,0.75)--(6,0);
\draw (6.75,-1.75) node[black] {$\bm b_2=(0,0.75,0,1)$};

\draw [black,-](11,0) -- (14,0) node [right, black] {$y=0$};
\draw [black,-](12,-1) -- (12,2) node [above, black] {$x=0$};
\draw [black,-](11,2) -- (14,-1) node [right, black] {$x+y=1$};
\draw (12.75,-1.75) node[black] {$\bm b_3=(0,0,0,1)$};
\filldraw[red](12,0)--(13,0)--(12,0);
\end{tikzpicture}
\end{figure}
It is easy to see that each extended deformation of $P(\bm a,I,J,K)$ is associated with at least one extended deforming vector. Namely, the configuration space of extended deformations of $P(\bm a,I,J,K)$ can be parameterized by continuous multi-variables. However, this way is not optimal. For example, although $P(\bm b_1,I,J,K)$ is precisely the same polytope as $P(\bm a,I,J,K)$, the corresponding extended deforming vectors $\bm b_1$ and $\bm a$ are not equal. On the other hand, it is clear that the normal fan $\mathcal{N}\big(P(\bm a,I,J,K)\big)$ coarsens the  normal fan $\mathcal{N}\big(P(\bm b_2,I,J,K)\big)$ and refines the normal fan $\mathcal{N}\big(P(\bm b_3,I,J,K)\big)$.  This further implies that the two definitions of extended deformation of convex polyhedra in \autoref{Deformation-Polyhedra} and \cite[Definition 2.3]{Ardila2020}  are not completely equivalent.
}
\end{example}

\section{Equivalence relations and classifications on real hyperplane arrangements}\label{Sec-4}
In the section, we are concerned with two subjects. One of them is to establish explicit relationships among four types of equivalence relations on real hyperplane arrangements: semi-lattice, combinatorial, normal and sign equivalences. The other one aims to describe the equivalence classes of the latter three relations for three types of deformations of the linear arrangement $\mathcal{A}_{\bm o}$: parallel translation $\mathcal{A}_{\bm a}$, coning $c\mathcal{A}_{\bm a}$, and elementary lift $\mathcal{A}^{\bm a}$.
\subsection{Equivalence relations on real hyperplane arrangements}\label{Sec-4-1}
Let us first  introduce the coning operator of hyperplane arrangements. It allows for comparing affine and linear arrangements, and also serves as a bridge between the signs of affine arrangements and their conings. Associated with an affine hyperplane $H:u_1x_1+\cdots+u_nx_n=a$ in $\mathbb{R}^n$, we define a linear hyperplane $ c H$ in $\mathbb{R}^{n+1}$ to be $ cH:u_1x_1+\cdots+u_nx_n+ax_{n+1}=0$. Let $\mathcal{A}$ be an affine arrangement in $\mathbb{R}^n$. The {\em coning} $ c\mathcal{A}$ of $\mathcal{A}$ is a linear arrangement in $\mathbb{R}^{n+1}$ consisting of all $ cH$ with $H\in\mathcal{A}$ and an additional hyperplane $K_0:x_{n+1}=0$.  In other words, the coning of $\mathcal{A}$ is the linear arrangement $c\mathcal{A}$ in $\mathbb{R}^{n+1}$ consisting of linear spans of affine hyperplanes in $\mathcal{A}$ and a new linear arrangement $K_0$ that is parallel to $\mathbb{R}^n\times 1$.

\cite[Theorem 3.2]{Wachs1986} showed that a poset $P$ is a geometric semi-lattice if and only if there is a geometric lattice $L$ with an atom $a$ such that $P= L-\{x\in L\mid a\le x\}$, where both $L$ and $a$ are uniquely determined by $P$. It is clear that $K_0$ is exactly an atom of $L(c\mathcal{A})$ such that
\begin{equation*}
L(\mathcal{A})\cong L(c\mathcal{A})-\{X\in L(c\mathcal{A}):X\subseteq K_0\}.
\end{equation*}
Therefore,  \cite[Theorem 3.2]{Wachs1986} directly deduces the following relation
\begin{equation}\label{EQ2}
L(\mathcal{A})\cong L(\mathcal{A}')\Longleftrightarrow L(c\mathcal{A})\cong L(c\mathcal{A}')
\end{equation}
for two real affine arrangements $\mathcal{A}$ and $\mathcal{A}'$. It is also not difficult to check that
\begin{equation}\label{EQ3}
\mathcal{F}(\mathcal{A})\cong\mathcal{F}(\mathcal{A}')\Longleftrightarrow\mathcal{F}(c\mathcal{A})\cong\mathcal{F}(c\mathcal{A}').
\end{equation}
When $\mathcal{A}$ and $\mathcal{A}'$ are real linear arrangements,  \cite[Corollary 1.69]{Aguiar-Mahajan2017} stated that if $\mathcal{F}(\mathcal{A})\cong\mathcal{F}(\mathcal{A}')$,  then $L(\mathcal{A})\cong L(\mathcal{A}')$. Together with \eqref{EQ2} and \eqref{EQ3}, we can immediately extend the result to real affine arrangements. But, the converse may be not true.
\begin{proposition}\label{Combinatorial-Face}
Let $\mathcal{A}$ {\rm(}$\mathcal{A}'$, resp.{\rm)} be a hyperplane arrangement in $\mathbb{R}^n$ {\rm(}$\mathbb{R}^{n'}$, resp.{\rm)}. If $\mathcal{A}$ and $\mathcal{A}'$ are combinatorially equivalent, then they are semi-lattice equivalent.
\end{proposition}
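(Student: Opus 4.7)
The plan is to reduce the general (affine) case to the linear case by passing through the coning construction, and then invoke the three ingredients already set up in the excerpt: the equivalence \eqref{EQ2} for intersection semi-lattices under coning, the equivalence \eqref{EQ3} for face posets under coning, and the cited \cite[Corollary 1.69]{Aguiar-Mahajan2017} which gives the desired implication in the linear case.

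Concretely, suppose $\mathcal{A}$ in $\mathbb{R}^n$ and $\mathcal{A}'$ in $\mathbb{R}^{n'}$ are combinatorially equivalent, i.e., $\mathcal{F}(\mathcal{A}) \cong \mathcal{F}(\mathcal{A}')$. First I would apply \eqref{EQ3} to transfer this isomorphism to the conings, obtaining $\mathcal{F}(c\mathcal{A}) \cong \mathcal{F}(c\mathcal{A}')$. Next, since $c\mathcal{A}$ and $c\mathcal{A}'$ are by construction linear arrangements in $\mathbb{R}^{n+1}$ and $\mathbb{R}^{n'+1}$ respectively, the cited \cite[Corollary 1.69]{Aguiar-Mahajan2017} yields $L(c\mathcal{A}) \cong L(c\mathcal{A}')$. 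Finally, applying \eqref{EQ2} in the reverse direction gives $L(\mathcal{A}) \cong L(\mathcal{A}')$, proving that $\mathcal{A}$ and $\mathcal{A}'$ are semi-lattice equivalent.

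There is no real obstacle here, since all three ingredients are already assembled and the argument is just the composition
\[
\mathcal{F}(\mathcal{A}) \cong \mathcal{F}(\mathcal{A}') \;\Longrightarrow\; \mathcal{F}(c\mathcal{A}) \cong \mathcal{F}(c\mathcal{A}') \;\Longrightarrow\; L(c\mathcal{A}) \cong L(c\mathcal{A}') \;\Longrightarrow\; L(\mathcal{A}) \cong L(\mathcal{A}').
\]
The only point that deserves a sentence of justification is that the case distinction between linear and affine arrangements is handled cleanly by the coning: $c\mathcal{A}$ is always linear regardless of whether $\mathcal{A}$ is, so we never have to worry about mixing the two settings. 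One might also briefly note that \eqref{EQ2} and \eqref{EQ3} are stated for affine arrangements and apply equally to linear ones (which are a special case), so the chain is valid in full generality. The converse direction fails in general, and the example that will follow the proof (if any) may record this, but for this proposition we only need the one direction.
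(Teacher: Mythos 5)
Your proposal is correct and reproduces exactly the argument the paper gives implicitly in the paragraph preceding the proposition: combine \eqref{EQ3}, the cited \cite[Corollary 1.69]{Aguiar-Mahajan2017} for the linear case via conings, and \eqref{EQ2} to descend back to the affine arrangements. The chain of isomorphisms you write out is the intended proof.
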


Let $\mathcal{A}=\{H_1,\ldots,H_m\}$ be a hyperplane arrangement in $\mathbb{R}^n$. For each face $F$ of $\mathcal{A}$, we define an {\em active index set} $I_{\mathcal{A}}(F)$ as
\[
I_{\mathcal{A}}(F):=\big\{i\in[m]\mid F\subseteq H_i\big\},
\]
and denote by
\[
J_{\mathcal{A}}(F):=\big\{j\in[m]\mid F\subseteq H^-_j, F\nsubseteq H_j\big\}\quad\And\quad K_{\mathcal{A}}(F):=\big\{k\in[m]\mid F\subseteq H^+_k, F\nsubseteq H_k\big\}.
\]
The triple $\big(I_{\mathcal{A}}( F),J_{\mathcal{A}}( F),K_{\mathcal{A}}( F)\big)$ is referred to as an {\em active triple index} of the face $F$ induced by $\mathcal{A}$. In this notation, there is a canonical way to write the face $F$ in the following form
\begin{equation}\label{Face-Form}
F=P\big(\bm a, I_{\mathcal{A}}(F), J_{\mathcal{A}}(F), K_{\mathcal{A}}(F)\big).
\end{equation}

Actually, the sign vector $\Sign_{\mathcal{A}}(F)$ of a face $F$ of $\mathcal{A}$, as defined in \eqref{Face-Point-Sign},  is also characterized by its active triple index. More precisely,  $\Sign_{\mathcal{A}}(F)$ can be written in the following form  $\Sign_{\mathcal{A}}(F)=\big(\Sign_{\mathcal{A}}(F)_1,\ldots,\Sign_{\mathcal{A}}(F)_m\big)$,  where
each $i$-th component is
\begin{equation}\label{Face-Sign-Vector}
\Sign_{\mathcal{A}}(F)_i=
\begin{cases}
   0, & \mbox{if }\quad i\in I_{\mathcal{A}}(F), \\
   -, & \mbox{if } \quad i\in J_{\mathcal{A}}(F),\\
   +, & \mbox{if }\quad i\in K_{\mathcal{A}}(F).
\end{cases}
\end{equation}

In addition, the order relation of $\mathcal{F}(\mathcal{A})$ has an easy combinatorial description in terms of the sign vectors of the faces. Namely, $F_1\subseteq F_2$ in $\mathcal{F}(\mathcal{A})$ if and only if the sign vector of $F_1$ is obtained from that of $F_2$ by replacing some $+$ or $-$. Let $\le$ be the partial order on the set $\{+,-,0\}$ defined as $0<+,-$ with $+$ and $-$ incomparable. This further induces the {\em sign order} $\le$ on $\{\pm,0\}^m$, that is, for any members $\bm s=(s_1,\ldots,s_m),\bm w=(w_1,\ldots,w_m)\in\{\pm,0\}^m$,  $\bm s\le \bm w$ if  and only if $s_i\le w_i$ for each $i\in[m]$.
\begin{lemma}\label{Face-Sign-Order}
Let $\mathcal{A}=\{H_1,\ldots,H_m\}$ be a hyperplane arrangement in $\mathbb{R}^n$. Then $F_1\subseteq F_2$ in $\mathcal{F}(\mathcal{A})$ if and only if $\Sign_{\mathcal{A}}(F_1)\le\Sign_{\mathcal{A}}(F_2)$.
\end{lemma}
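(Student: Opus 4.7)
My plan is to reduce the statement about set inclusion to the combinatorial condition on sign vectors by passing through the active triple index $(I_{\mathcal{A}}(F),J_{\mathcal{A}}(F),K_{\mathcal{A}}(F))$, which by equation \eqref{Face-Sign-Vector} is just a relabelling of the coordinates of $\Sign_{\mathcal{A}}(F)$. The key observation is that the partial order $\le$ on $\{+,-,0\}$ has $0$ as the unique minimum, so $s\le t$ in $\{+,-,0\}$ means either $s=t$ or $s=0$; equivalently, moving from $\Sign_{\mathcal{A}}(F_1)$ to $\Sign_{\mathcal{A}}(F_2)$ coordinatewise amounts to replacing some zero entries with $+$ or $-$.

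For the forward direction, suppose $F_1\subseteq F_2$. For each $i\in[m]$, I would examine the value $\Sign_{\mathcal{A}}(F_2)_i$ by cases. If $\Sign_{\mathcal{A}}(F_2)_i=0$, then $F_2\subseteq H_i$, so $F_1\subseteq H_i$ as well, and $\Sign_{\mathcal{A}}(F_1)_i=0$. If $\Sign_{\mathcal{A}}(F_2)_i=+$, then $F_2\subseteq H_{\bm u_i,a_i}^+$, so $F_1\subseteq H_{\bm u_i,a_i}^+$, and $\Sign_{\mathcal{A}}(F_1)_i\in\{0,+\}$, each of which is $\le +$. The case $\Sign_{\mathcal{A}}(F_2)_i=-$ is symmetric. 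Combining the three cases yields $\Sign_{\mathcal{A}}(F_1)\le\Sign_{\mathcal{A}}(F_2)$.

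For the reverse direction, I would use the canonical expression in \eqref{Face-Form}, namely
\[
F_s=P\bigl(\bm a,\,I_{\mathcal{A}}(F_s),\,J_{\mathcal{A}}(F_s),\,K_{\mathcal{A}}(F_s)\bigr),\qquad s=1,2.
\]
Given any $\bm x\in F_1$ and any index $i\in[m]$, I would translate the hypothesis $\Sign_{\mathcal{A}}(F_1)_i\le\Sign_{\mathcal{A}}(F_2)_i$ into containments among the active triple indices: coordinates where $\Sign_{\mathcal{A}}(F_2)_i=0$ force $i\in I_{\mathcal{A}}(F_1)$ (hence $\langle\bm u_i,\bm x\rangle=a_i$), while coordinates where $\Sign_{\mathcal{A}}(F_2)_i=\pm$ force $\Sign_{\mathcal{A}}(F_1)_i\in\{0,\pm\}$ (hence the corresponding weak inequality $\langle\bm u_i,\bm x\rangle\lesseqgtr a_i$ still holds). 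These are exactly the defining constraints of $F_2$, so $\bm x\in F_2$, which gives $F_1\subseteq F_2$.

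Neither direction involves any serious obstacle: the proof is essentially bookkeeping on the three possible values of each sign coordinate, supported by the defining equations and inequalities for $F_1$ and $F_2$. The only point that requires a moment of care is remembering that $+$ and $-$ are incomparable in the partial order, which is precisely what makes $\Sign_{\mathcal{A}}(F_1)_i\le\Sign_{\mathcal{A}}(F_2)_i$ force $i\in I_{\mathcal{A}}(F_1)$ whenever $\Sign_{\mathcal{A}}(F_2)_i=0$.
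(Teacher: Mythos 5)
Your proof is correct and follows essentially the same route as the paper's: one translates the sign-vector inequality $\Sign_{\mathcal{A}}(F_1)\le\Sign_{\mathcal{A}}(F_2)$ into the containments $I_{\mathcal{A}}(F_1)\supseteq I_{\mathcal{A}}(F_2)$, $J_{\mathcal{A}}(F_1)\subseteq J_{\mathcal{A}}(F_2)$, $K_{\mathcal{A}}(F_1)\subseteq K_{\mathcal{A}}(F_2)$ via \eqref{Face-Sign-Vector}, and then passes to the polyhedral descriptions in \eqref{Face-Form} to compare $F_1$ and $F_2$ as sets. Your forward direction spells out the coordinate-by-coordinate case analysis that the paper compresses into a single sentence, but the underlying argument is identical. (One small point of phrasing: the conclusion $\Sign_{\mathcal{A}}(F_2)_i=0\Rightarrow i\in I_{\mathcal{A}}(F_1)$ is forced because $0$ is the unique minimum of the sign order, not because $+$ and $-$ are incomparable; incomparability is what rules out, say, $\Sign_{\mathcal{A}}(F_1)_i=+$ when $\Sign_{\mathcal{A}}(F_2)_i=-$.)
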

\begin{proof}
Suppose $\Sign_{\mathcal{A}}(F_1)\le\Sign_{\mathcal{A}}(F_2)$. This means
\begin{equation}\label{Relation}
I_{\mathcal{A}}(F_1)\supseteq I_{\mathcal{A}}(F_2),\quad J_{\mathcal{A}}(F_1)\subseteq J_{\mathcal{A}}(F_2)
\quad\And\quad K_{\mathcal{A}}(F_1)\subseteq K_{\mathcal{A}}(F_2).
\end{equation}
It follows from \eqref{Face-Form} that
\begin{equation*}
F_1=P\big(\bm a, I_{\mathcal{A}}(F_1), J_{\mathcal{A}}(F_1), K_{\mathcal{A}}(F_1)\big)\subseteq P\big(\bm a, I_{\mathcal{A}}(F_2), J_{\mathcal{A}}(F_2), K_{\mathcal{A}}(F_2)\big)=F_2.
\end{equation*}
Conversely, suppose $F_1,F_2$ are the faces of $\mathcal{A}$ and $F_1$ is a nonempty face of $F_2$. Then the active triple indices of $F_1$ and $F_2$ satisfy the same relations as in \eqref{Relation}. Following \eqref{Face-Sign-Vector}, we have $\Sign_{\mathcal{A}}(F_1)\le\Sign_{\mathcal{A}}(F_2)$, which completes the proof.
\end{proof}

With \autoref{Combinatorial-Face} and \autoref{Face-Sign-Order}, we further achieve the next theorem.
\begin{theorem}\label{Combinatorial-Equivalent0}
Let $\mathcal{A}=\{H_1,\ldots,H_m\}$ {\rm(}$\mathcal{A}'=\{H_1',\ldots,H_m'\}$, resp.{\rm)} be a hyperplane arrangement in $\mathbb{R}^n$ {\rm(}$\mathbb{R}^{n'}$, resp.{\rm)}. If $\mathcal{A}$ and $\mathcal{A}'$ are sign equivalent, then they are combinatorially and semi-lattice equivalent.
\end{theorem}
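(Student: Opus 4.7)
The plan is to establish combinatorial equivalence first, and then invoke \autoref{Combinatorial-Face} to deduce semi-lattice equivalence at no extra cost. Concretely, I will build an explicit order-preserving bijection $\Phi:\mathcal{F}(\mathcal{A})\to \mathcal{F}(\mathcal{A}')$ by routing through sign vectors, leveraging the fact that the hypothesis identifies the two sign sets.

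The first step is to show that the assignment $F\mapsto \Sign_{\mathcal{A}}(F)$ is a bijection from $\mathcal{F}(\mathcal{A})$ onto $\Sign(\mathcal{A})$. Surjectivity is precisely the description in \eqref{Arrangement-Sign}. For injectivity, suppose $\Sign_{\mathcal{A}}(F_1)=\Sign_{\mathcal{A}}(F_2)$ for faces $F_1,F_2$ of $\mathcal{A}$. Reading off the components using \eqref{Face-Sign-Vector} gives $I_{\mathcal{A}}(F_1)=I_{\mathcal{A}}(F_2)$, $J_{\mathcal{A}}(F_1)=J_{\mathcal{A}}(F_2)$ and $K_{\mathcal{A}}(F_1)=K_{\mathcal{A}}(F_2)$; then the canonical representation \eqref{Face-Form} forces $F_1=F_2$. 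The analogous statement holds for $\mathcal{A}'$.

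Using these two bijections together with the hypothesis $\Sign(\mathcal{A})=\Sign(\mathcal{A}')$, I define $\Phi:\mathcal{F}(\mathcal{A})\to \mathcal{F}(\mathcal{A}')$ by letting $\Phi(F)$ be the unique face of $\mathcal{A}'$ whose sign vector equals $\Sign_{\mathcal{A}}(F)$. This map is a bijection by construction. To check that $\Phi$ is order-preserving, I apply \autoref{Face-Sign-Order} twice: for faces $F_1,F_2\in \mathcal{F}(\mathcal{A})$,
\[
F_1\subseteq F_2 \iff \Sign_{\mathcal{A}}(F_1)\le \Sign_{\mathcal{A}}(F_2) \iff \Sign_{\mathcal{A}'}\big(\Phi(F_1)\big)\le \Sign_{\mathcal{A}'}\big(\Phi(F_2)\big) \iff \Phi(F_1)\subseteq \Phi(F_2).
\]
Hence $\Phi$ is a poset isomorphism and $\mathcal{F}(\mathcal{A})\cong \mathcal{F}(\mathcal{A}')$. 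Finally, \autoref{Combinatorial-Face} (whose proof already handled the transition from affine face posets to intersection semi-lattices via the coning trick) immediately yields $L(\mathcal{A})\cong L(\mathcal{A}')$, finishing the theorem.

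There is no genuine obstacle; the only delicate point is the injectivity of the sign map on faces, and this is handled cleanly by the canonical form \eqref{Face-Form} already introduced in the text. Everything else is a direct composition of previously established facts.
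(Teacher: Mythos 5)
Your proposal is correct and follows essentially the same route as the paper: define $\Phi$ by matching sign vectors, use \autoref{Face-Sign-Order} to show it is an order-isomorphism, and then invoke \autoref{Combinatorial-Face} for the semi-lattice statement. The only difference is that you spell out the injectivity of $F\mapsto\Sign_{\mathcal{A}}(F)$ via \eqref{Face-Sign-Vector} and \eqref{Face-Form}, which the paper compresses into the phrase ``automatically give rise to a bijection.''
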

\begin{proof}
Recall from \eqref{Arrangement-Sign} that we have
\[
\Sign(\mathcal{A})=\big\{\Sign_{\mathcal{A}}(F)\mid F\in\mathcal{F}(\mathcal{A})\big\}\quad\And\quad \Sign(\mathcal{A}')=\big\{\Sign_{\mathcal{A}'}(F')\mid F'\in\mathcal{F}(\mathcal{A}')\big\}.
\]
Since $\Sign(\mathcal{A})=\Sign(\mathcal{A}')$, the above equations automatically give rise to a bijection $\Phi:\mathcal{F}(\mathcal{A})\to\mathcal{F}(\mathcal{A}')$ such that
\[
\Sign_{\mathcal{A}'}\big(\Phi(F)\big)=\Sign_{\mathcal{A}}(F),\quad\forall\, F\in\mathcal{F}(\mathcal{A}).
\]
So, to prove $\mathcal{F}(\mathcal{A})\cong\mathcal{F}(\mathcal{A}')$, it can be reduced to showing that $F_1\subseteq F_2$ in $\mathcal{F}(\mathcal{A})$ if and only if $\Sign_{\mathcal{A}}(F_1)\le\Sign_{\mathcal{A}}(F_2)$. It follows from \autoref{Face-Sign-Order} that this holds. Hence, we arrive at $\mathcal{F}(\mathcal{A})\cong\mathcal{F}(\mathcal{A}')$. Combining \autoref{Combinatorial-Face}, we conclude $L(\mathcal{A})\cong L(\mathcal{A}')$.
\end{proof}

Immediately, applying \autoref{Combinatorial-Equivalent0} to (i), (iii) and (iv) in \autoref{Three-Type-Sign}, we deduce that (ii) and (iii) hold in \autoref{Three-Type-Normal-Combinatorial}. Additionally, we quickly derive the following result from \autoref{Combinatorial-Face} and \autoref{Equivalence-Arrangement}.
\begin{corollary}\label{Combinatorial-Equivalent}
Let $\mathcal{A}$ and $\mathcal{A}'$ be  hyperplane arrangements in $\mathbb{R}^n$. If $\mathcal{A}$ and $\mathcal{A}'$ are normally equivalent, then they are combinatorially and semi-lattice equivalent.
\end{corollary}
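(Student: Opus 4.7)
The plan is to first establish combinatorial equivalence $\mathcal{F}(\mathcal{A}) \cong \mathcal{F}(\mathcal{A}')$, and then invoke \autoref{Combinatorial-Face} to derive semi-lattice equivalence for free. Since normal equivalence already supplies an order-preserving bijection $\Psi: \mathcal{F}(\mathcal{A}) \to \mathcal{F}(\mathcal{A}')$ with the property that $F$ and $\Psi(F)$ are normally equivalent convex polyhedra for every $F \in \mathcal{F}(\mathcal{A})$, the entire task reduces to showing that $\Psi^{-1}$ is also order-preserving, i.e.\ that $\Psi$ is an order isomorphism.

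First I would establish the key technical lemma that for any nonempty face $F \in \mathcal{F}(\mathcal{A})$, the down-set $\{G \in \mathcal{F}(\mathcal{A}) : G \subseteq F\}$ coincides precisely with the set of polyhedral faces of $F$. Using the canonical form \eqref{Face-Form} to write $F = P(\bm a, I_{\mathcal{A}}(F), J_{\mathcal{A}}(F), K_{\mathcal{A}}(F))$ and applying \autoref{Face-Characterization}, every polyhedral face of $F$ arises by enlarging the active index set $I_{\mathcal{A}}(F)$ and shrinking $J_{\mathcal{A}}(F), K_{\mathcal{A}}(F)$, hence is still an intersection of closed half-spaces and hyperplanes coming from $\mathcal{A}$; conversely any sub-face of $F$ in $\mathcal{F}(\mathcal{A})$ is cut out of $F$ by additional hyperplanes of $\mathcal{A}$ and is therefore a polyhedral face of $F$.

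Second, since $F$ and $\Psi(F)$ are normally equivalent polyhedra, \autoref{Both-Equivalent-Relationship} yields $\mathcal{F}(F) \cong \mathcal{F}(\Psi(F))$; in particular the two polyhedra have the same (finite) number of polyhedral faces. Combining this with the previous paragraph shows that the down-sets of $F$ in $\mathcal{F}(\mathcal{A})$ and of $\Psi(F)$ in $\mathcal{F}(\mathcal{A}')$ have equal cardinality. Because $\Psi$ is an order-preserving injection from the former into the latter, matching cardinalities force it to be a bijection between these down-sets. Hence every $G' \subseteq \Psi(F)$ has a preimage $G \in \mathcal{F}(\mathcal{A})$ with $G \subseteq F$, which is exactly the statement that $\Psi^{-1}$ is order-preserving.

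Therefore $\Psi$ is an order isomorphism, giving $\mathcal{F}(\mathcal{A}) \cong \mathcal{F}(\mathcal{A}')$, and semi-lattice equivalence then follows immediately from \autoref{Combinatorial-Face}. I expect the main obstacle to be the identification of polyhedral faces of $F$ with sub-faces of $F$ in $\mathcal{F}(\mathcal{A})$: a priori one might worry that $F$ has polyhedral faces not cut out by hyperplanes of $\mathcal{A}$, so the argument hinges on the active-triple-index machinery from \autoref{Sec-3-2} to guarantee that all constraining hyperplanes already belong to $\mathcal{A}$.
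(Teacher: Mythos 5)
Your proof is correct, and it is noticeably more careful than what the paper offers. The paper disposes of this corollary in a single remark (``we quickly derive the following result from Proposition~4.3 and Definition~2.4''), implicitly treating the order-preserving bijection $\Psi$ in the definition of normal equivalence as if it were automatically a poset isomorphism. You correctly flag that this is not a priori true: an order-preserving bijection between finite posets need not have an order-preserving inverse. Your down-set cardinality argument closes this exact gap. The two pieces of it are sound: (a) using the canonical representation $F=P(\bm a,I_{\mathcal{A}}(F),J_{\mathcal{A}}(F),K_{\mathcal{A}}(F))$ from \eqref{Face-Form} together with \autoref{Face-Characterization}, the down-set of $F$ in $\mathcal{F}(\mathcal{A})$ is identified with the set of nonempty polyhedral faces of $F$; and (b) since $F$ and $\Psi(F)$ are normally equivalent polyhedra, \autoref{Both-Equivalent-Relationship} forces $|\mathcal{F}(F)|=|\mathcal{F}(\Psi(F))|$, so $\Psi$ restricted to each down-set is an injection between finite sets of equal cardinality, hence a bijection, hence $\Psi^{-1}$ is order-preserving. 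The conclusion via \autoref{Combinatorial-Face} is then exactly what the paper intends.

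So you are not taking a different route---you are supplying the rigor the paper leaves implicit. The one thing worth saying explicitly in your write-up is that in establishing the identification in (a) you should note that for a polyhedral face $G$ of $F$, the set $\relint(G)=\mathring{P}(\bm a, I_0,J_0,K_0)$ is precisely an open face of $\mathcal{A}$ (its points share a common sign vector, and every point of $\mathbb{R}^n$ with that sign vector lies in $\relint(G)$), so that $G=\cl(\relint(G))$ is indeed a face of $\mathcal{A}$; conversely a face of $\mathcal{A}$ contained in $F$ is cut out of $F$ by enlarging the active index set, hence is a polyhedral face of $F$ by \autoref{Face-Characterization}. That is exactly the ``active-triple-index machinery'' you gesture at, and it does deliver what you need.
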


We conclude this section by summarizing the following relations:
\begin{equation}\label{Four-Relations}
\mathcal{A}, \mathcal{A}' \mbox{ are normally or sign equivalent}\Rightarrow \mathcal{F}(\mathcal{A})\cong\mathcal{F}(\mathcal{A}')\Rightarrow
L(\mathcal{A})\cong L(\mathcal{A}').
\end{equation}
\subsection{Parallel translations}\label{Sec-4-2}
Based on the study in \autoref{Sec-3} and \autoref{Sec-4-1}, we further examine the normal, combinatorial and sign equivalence relations on parallel translations. We begin by proving the result (i) in \autoref{Three-Type-Normal-Combinatorial} that for each face $F$ of $\delta\mathcal{A}_{\bm o}$, the parallel translations $\mathcal{A}_{\bm a}$ and $\mathcal{A}_{\bm b}$ are normally and combinatorially equivalent for any $\bm a,\bm b\in\relint(F)$.
\begin{proof}[Proof of (i) in \autoref{Three-Type-Normal-Combinatorial}]
By \autoref{Combinatorial-Equivalent}, it reduces to showing that $\mathcal{A}_{\bm a}$ and $\mathcal{A}_{\bm b}$ are normally equivalent. Note that the expressing form of faces in \eqref{Face-Form} induces a map
\begin{equation}\label{EQ0}
\Psi:\mathcal{F}(\mathcal{A}_{\bm a})\to\mathcal{F}(\mathcal{A}_{\bm b}),\quad G\mapsto
\Psi(G)=P\big(\bm b,I_{\mathcal{A}_{\bm a}}(G),J_{\mathcal{A}_{\bm a}}(G),K_{\mathcal{A}_{\bm a}}(G)\big).
\end{equation}
Since $G=P\big(\bm a,I_{\mathcal{A}_{\bm a}}(G),J_{\mathcal{A}_{\bm a}}(G),K_{\mathcal{A}_{\bm a}}(G)\big)$, we have $\Phi(G)\ne\emptyset$ by  \autoref{Nonempty}. This implies that $\Phi(G)$ is a face of $\mathcal{A}_{\bm b}$, that is, $\Psi$ is well-defined. It follows from $I_{\mathcal{A}_{\bm b}}\big(\Psi(G)\big)=I_{\mathcal{A}_{\bm a}}(G)$ in \autoref{Face-Bijection} that we arrive at
\begin{equation}\label{Index-Equation}
\big(I_{\mathcal{A}_{\bm b}}(\Psi(G)),J_{\mathcal{A}_{\bm b}}(\Psi(G)),K_{\mathcal{A}_{\bm b}}(\Psi(G))\big)=\big(I_{\mathcal{A}_{\bm a}}(G),J_{\mathcal{A}_{\bm a}}(G),K_{\mathcal{A}_{\bm a}}(G)\big).
\end{equation}
Then $G$ and $\Psi(G)$ are normally equivalent via \autoref{Normally-Combinatorially}.

Next we show that $\Psi$ is bijective. According to \eqref{Face-Form}, we deduce that two faces $F_1$ and $F_2$ of $\mathcal{A}_{\bm a}$ are identical if and only if $F_1$ and $F_2$ share the same active triple index. Then, when $F_1\ne F_2$, they have different  active triple indices. From \eqref{Index-Equation}, we get $\Psi(F_1)$ and $\Psi(F_2)$ have distinct active triple indices as well. This implies $\Psi(F_1)\ne\Psi(F_2)$. Namely, $\Psi$ is injective. Similarly, define a map
\[
\Psi^{-1}:\mathcal{F}(\mathcal{A}_{\bm b})\to\mathcal{F}(\mathcal{A}_{\bm a}),\quad G\mapsto\Psi^{-1}(G)=P\big(\bm a,I_{\mathcal{A}_{\bm b}}(G),J_{\mathcal{A}_{\bm b}}(G),K_{\mathcal{A}_{\bm b}}(G)\big).
\]
As the roles of $\mathcal{A}_{\bm a}$ and $\mathcal{A}_{\bm b}$ can be interchanged, we can also ascertain that $\Psi^{-1}$ is an injection. Furthermore, the equation \eqref{Index-Equation} guarantees that $\Psi\big(\Psi^{-1}(G)\big)=G$ for any face $G$ of $\mathcal{A}_{\bm b}$. Hence, $\Psi$ is a surjection. Up to now we have proved that $\Psi$ is a bijection.

It remains to prove that $\Psi$ is an order-preserving map. Suppose $G_0$ is a proper face of $G$. Then $G_0$ is also a face of $\mathcal{A}_{\bm a}$.
From \eqref{Index-Equation}, $G_0$ and $\Psi(G_0)$ ($G$ and $\Psi(G)$, resp.) have the same active triple index. It follows from \autoref{Face-Sign-Order} that $\Psi(G_0)$ is a face of $\Psi(G)$, i.e., $\Psi$ is order-preserving. We complete the proof.
\end{proof}

To verify that the results (i) and (ii) in \autoref{Three-Type-Sign} are equivalent mutually, we build the connection between the circuit vectors and  sign vectors of  parallel translations of $\mathcal{A}_{\bm o}$. Given a real number $x$, without causing confusion, we also use the notation $\Sign( x)$ to denote the sign of $x$, that is, $\Sign(x)$ equals $+$ if $x>0$, $-$ if $x<0$, $0$ else. Associated with a circuit vector $\bm c^C=(c_1,\ldots,c_m)$ of $C\in\mathcal{C}(\mathcal{A}_{\bm o})$,  we denote by
\[
S_{\bm c^C}:=\big\{\bm s=(s_1,\ldots,s_m)\in\{\pm,0\}^m\mid s_i=\Sign(c_i),\forall\, i\in C\big\}.
\]
For convenience, let
\[
C^+:=\{i\in C\mid c_i>0\}\quad\And\quad C^-:=\{i\in C\mid c_i<0\}.
\]
\begin{lemma}\label{Separate-Cells}
Let $\bm c^C=(c_1,\ldots,c_m)$ be a circuit vector of $C\in\mathcal{C}(\mathcal{A}_{\bm o})$, $\bm a\in \mathring{H}_{\bm c^C}^-$ and $\bm b\in H_{\bm c^C}^+$. Then
\[
\Sign(\mathcal{A}_{\bm a}) \cap S_{\bm c^C}\ne\emptyset\quad \And \quad\Sign(\mathcal{A}_{\bm b}) \cap S_{\bm c^C}=\emptyset.
\]
\end{lemma}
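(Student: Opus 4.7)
My plan is to prove the two halves separately, with the second (emptiness) being the quick direct computation and the first (non-emptiness) requiring an explicit construction.

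For the emptiness claim, suppose for contradiction that $\bm s \in \Sign(\mathcal{A}_{\bm b}) \cap S_{\bm c^C}$. Then there exists $\bm x \in \mathbb{R}^n$ with $\Sign_{\mathcal{A}_{\bm b}}(\bm x) = \bm s$, so for every $i \in C$ the scalar $\langle \bm u_i, \bm x\rangle - b_i$ has the same (nonzero) sign as $c_i$. I will then compute
\[
\sum_{i=1}^{m} c_i\bigl(\langle \bm u_i, \bm x\rangle - b_i\bigr) = \Bigl\langle \sum_{i=1}^{m} c_i\bm u_i,\, \bm x \Bigr\rangle - \langle \bm c^C, \bm b\rangle = -\langle \bm c^C, \bm b\rangle,
\]
using $\sum c_i \bm u_i = \bm 0$. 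The left-hand side is a sum of strictly positive terms (one per index in $C$, with $c_i = 0$ for $i\notin C$), hence strictly positive; the right-hand side is $\le 0$ since $\bm b \in H_{\bm c^C}^+$. This contradiction proves $\Sign(\mathcal{A}_{\bm b}) \cap S_{\bm c^C} = \emptyset$.

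For the non-emptiness claim I will reverse-engineer the above identity. Set
\[
t := \frac{-\langle \bm c^C, \bm a\rangle}{\sum_{i\in C} c_i^2},
\]
which is strictly positive because $\bm a \in \mathring{H}_{\bm c^C}^-$ forces $\langle \bm c^C, \bm a\rangle < 0$. I claim the linear system $U_C\bm x = \bm a_C + t\bm c^C_C$ is solvable. Since $C$ is a minimal linearly dependent set, $\{\bm u_i : i \in C\setminus\{j\}\}$ is linearly independent for every $j \in C$, so $U_C$ has rank $|C|-1$ and its left null space is exactly the line spanned by $\bm c^C_C$. By the Fredholm alternative, solvability reduces to checking
\[
\langle \bm c^C_C,\, \bm a_C + t\bm c^C_C\rangle = \langle \bm c^C, \bm a\rangle + t\sum_{i\in C} c_i^2 = 0,
\]
which holds by the choice of $t$. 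Any solution $\bm x$ satisfies $\langle \bm u_i, \bm x\rangle - a_i = t c_i$ for $i \in C$, so $\Sign(\langle\bm u_i, \bm x\rangle - a_i) = \Sign(c_i)$, i.e., $\Sign_{\mathcal{A}_{\bm a}}(\bm x) \in S_{\bm c^C}$.

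The only conceptually delicate step is the consistency check for the linear system, where I need to invoke the minimality of the circuit $C$ to identify the left null space of $U_C$ with $\operatorname{span}(\bm c^C_C)$. Everything else is bookkeeping with the defining identity $\sum c_i \bm u_i = \bm 0$.
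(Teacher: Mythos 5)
Your proof is correct, and it takes a genuinely different and more self-contained route than the paper's. The paper proves both halves in one stroke by appealing to its generalized Farkas lemma (Proposition~\ref{General-Farkas-lemma}): nonemptiness of the open cell $\mathring{P}(\bm a_C,\emptyset, C^-,C^+)$ (and emptiness of the corresponding cell for $\bm b$) is translated into the failure (resp.\ success) of the dual alternative, using the fact that the left null space of $U_C$ is the line $\operatorname{span}(\bm c^C_C)$. Your argument avoids the Farkas machinery entirely: the emptiness half is a short sign computation, pairing $\bm c^C$ against the slacks $\langle\bm u_i,\bm x\rangle - b_i$ to force the contradiction $0 < -\langle\bm c^C,\bm b\rangle \le 0$; the nonemptiness half constructs an explicit witness $\bm x$ by solving $U_C\bm x = \bm a_C + t\,\bm c^C_C$ with a carefully chosen $t>0$, with solvability justified by the Fredholm alternative (which is where you, too, invoke the circuit property $\operatorname{Nul}(U_C^{\mathsf T}) = \operatorname{span}(\bm c^C_C)$). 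The trade-off: the paper's route reuses the Farkas tool it already develops for Lemma~\ref{Nonempty}, keeping a uniform mechanism across the section, while yours is shorter, more transparent, and gives a concrete point realizing the required sign pattern, which is pedagogically nicer but doesn't leverage the infrastructure the paper has already built.
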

\begin{proof}
Suppose $\bm y\in\mathbb{R}^{|C|}$ satisfies $\bm yU_C=0$. Since $\{\bm u_i\mid i\in C\}$ is the minimum linearly dependent set via \eqref{circuit}, $\bm y=\lambda \bm c^C_C$ for some non-zero real number $\lambda$ . Recall from \eqref{circuit-vector} that $C=C^+\sqcup C^-$. It follows from
\autoref{General-Farkas-lemma} that $\mathring{P}(\bm a_C,\emptyset, C^-,C^+)\ne\emptyset$. This implies $\Sign(\mathcal{A}_{\bm a}) \cap S_{\bm c^C}\ne\emptyset$. Likewise, we can obtain $\Sign(\mathcal{A}_{\bm b}) \cap S_{\bm c^C}=\emptyset$.
\end{proof}

We now proceed to show that the parallel translations $\mathcal{A}_{\bm a}$ and $\mathcal{A}_{\bm b}$ have the same sign if and only if the vectors $\bm a,\bm b$ come from the same open face of $\delta\mathcal{A}_{\bm o}$ in \autoref{Three-Type-Sign}.
\begin{proof}[ Proof of (i) $\Leftrightarrow$ (ii) in \autoref{Three-Type-Sign}]
To obtain (i) $\Rightarrow$ (ii), let the map $\Psi:\mathcal{F}(\mathcal{A}_{\bm a})\to\mathcal{F}(\mathcal{A}_{\bm b})$ be defined as in \eqref{EQ0}.  Since $\bm a,\bm b\in\relint(F)$ for $F\in\mathcal{F}(\delta\mathcal{A}_{\bm o})$, $\Psi$ is a bijection by the proof of (i) in \autoref{Three-Type-Normal-Combinatorial}. Combining \eqref{Face-Sign-Vector} and \eqref{Index-Equation}, we have $\Sign(\mathcal{A}_{\bm a})=\Sign(\mathcal{A}_{\bm b})$ via \eqref{Arrangement-Sign}.

Prove (ii) $\Rightarrow$ (i) by negation. Suppose $\bm a\in \mathring{H}_{\bm c^C}^-$ and $\bm b\in H_{\bm c^C}^+$  for some $C\in\mathcal{C}(\mathcal{A}_{\bm o})$ and $H_{\bm c^C}\in\delta\mathcal{A}_{\bm o}$. It follows from \autoref{Separate-Cells} that $\Sign(\mathcal{A}_{\bm a})\ne\Sign(\mathcal{A}_{\bm b})$, which contradicts the assumption $\Sign(\mathcal{A}_{\bm a})=\Sign(\mathcal{A}_{\bm b})$. We finish the proof.
\end{proof}

\autoref{Geometric-Face} further illustrates that the normal and sign equivalences of parallel translations of $\mathcal{A}_{\bm o}$ are consistent whenever $\mathcal{A}_{\bm o}$ is not a multi-arrangement. The following results are necessary to establish this.
\begin{lemma}\label{Separate-Regions}
Let $\bm c^C=(c_1,\ldots,c_m)$  be a circuit vector of $C\in\mathcal{C}(\mathcal{A}_{\bm o})$ and $\bm a\in \mathring{H}_{\bm c^C}^-$. Then
$P(\bm a_C,\emptyset, C^-,C^+)$ is a closed region of $\mathcal{A}_{\bm a_C}:=\{H_{\bm u_j,a_j}\mid j\in C\}$ and $\dim\big(P(\bm a_C,\emptyset, C^-,C^+)\cap H_{\bm u_i,a_i}\big)=n-1$ for each $i\in C$.
\end{lemma}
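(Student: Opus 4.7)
The plan is to prove both assertions via \autoref{General-Farkas-lemma}, exploiting the fact that minimality of $C$ forces every vector $\bm y \in \mathbb{R}^C$ with $\sum_{j \in C} y_j \bm u_j = \bm 0$ to be a scalar multiple of $\bm c^C_C$. The hypothesis $\bm a \in \mathring{H}_{\bm c^C}^{-}$, i.e., $\langle \bm c^C, \bm a\rangle < 0$, will then rule out every putative Farkas certificate of infeasibility.

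For the first assertion, I would first note that $\mathring{P}(\bm a_C, \emptyset, C^-, C^+) \ne \emptyset$ by the same argument already carried out in \autoref{Separate-Cells}: any Farkas certificate $\bm y$ must satisfy $\bm y U_C = \bm 0$, hence $\bm y = \lambda \bm c^C_C$; the sign conditions $\bm y_{C^-} \ge \bm 0$ and $\bm y_{C^+} \le \bm 0$ force $\lambda \le 0$, and so $\langle \bm y, \bm a_C\rangle = \lambda \langle \bm c^C, \bm a\rangle \ge 0$, contradicting both disjuncts of \autoref{General-Farkas-lemma}. Since $\mathring{P}(\bm a_C, \emptyset, C^-, C^+)$ is thus a nonempty open convex set defined by strict inequalities that avoid every hyperplane of $\mathcal{A}_{\bm a_C}$, every point in it has sign vector $-$ on $C^-$ and $+$ on $C^+$; conversely, any point of $\mathbb{R}^n$ with that sign vector lies in $\mathring{P}(\bm a_C, \emptyset, C^-, C^+)$. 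This identifies $\mathring{P}(\bm a_C, \emptyset, C^-, C^+)$ with a single open region of $\mathcal{A}_{\bm a_C}$, and passing to the closure yields $P(\bm a_C, \emptyset, C^-, C^+)$ as the corresponding closed region.

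For the second assertion, fix $i \in C$; by symmetry assume $i \in C^-$. It suffices to show $\mathring{P}(\bm a_C, \{i\}, C^- \setminus \{i\}, C^+) \ne \emptyset$, because this is the relative interior, inside $H_{\bm u_i, a_i}$, of $P(\bm a_C, \emptyset, C^-, C^+) \cap H_{\bm u_i, a_i}$, and so its nonemptiness forces the intersection to have dimension $\dim(H_{\bm u_i, a_i}) = n-1$. Applying \autoref{General-Farkas-lemma}, a certificate of infeasibility now consists of a free scalar $y_i$ together with $\bm y_{C^- \setminus \{i\}} \ge \bm 0$ and $\bm y_{C^+} \le \bm 0$ satisfying $\sum_{j \in C} y_j \bm u_j = \bm 0$. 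Circuit minimality again gives $\bm y = \lambda \bm c^C_C$; the sign conditions on the remaining indices $C \setminus \{i\}$ still force $\lambda \le 0$ (they are preserved: $c_j < 0$ for $j \in C^- \setminus \{i\}$ and $c_j > 0$ for $j \in C^+$), and then $\langle \bm y, \bm a_C\rangle = \lambda \langle \bm c^C, \bm a\rangle \ge 0$, with equality only when $\lambda = 0$ and $\bm y = \bm 0$. Neither possibility fulfills the infeasibility disjunction of \autoref{General-Farkas-lemma}, so the system is feasible.

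The main obstacle is the careful bookkeeping of the strict-versus-equality cases in \autoref{General-Farkas-lemma}, particularly for the facet step where the free variable $y_i$ removes one sign constraint yet the sign of $\lambda$ must still be pinned down from the surviving constraints on $C \setminus \{i\}$. Once one sees that minimality collapses every certificate to $\lambda \bm c^C_C$, both parts of the lemma reduce to the single chain of implications ``$\lambda \le 0$ and $\langle \bm c^C, \bm a\rangle < 0$ $\Rightarrow$ $\lambda \langle \bm c^C, \bm a\rangle \ge 0$,'' which is incompatible with either disjunct of the infeasibility condition.
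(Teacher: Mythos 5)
Your proof is correct, and your argument for the dimension claim is genuinely different from---and in fact cleaner than---the one in the paper. For the first assertion both you and the paper invoke (the argument behind) \autoref{Separate-Cells} to get $\mathring{P}(\bm a_C,\emptyset,C^-,C^+)\ne\emptyset$. For the second assertion, however, the paper argues by contradiction on $k=\dim\big(R\cap H_{\bm u_i,a_i}\big)$: it treats $k=-1$ separately and then, for $0\le k\le n-2$, writes $G=R\cap H_{\bm u_i,a_i}$ as an intersection of facets via \autoref{Facet-Characterization} and \autoref{Face-Face}, bounds $|I_G|=n-k$, and derives a contradiction either by finding a proper subcircuit $C_0\subsetneq C$ inside $I_G\sqcup\{i\}$ or by forcing $\bm a\in H_{\bm c^C}$. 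You bypass this entirely by applying \autoref{General-Farkas-lemma} directly to the mixed equality/strict-inequality system defining $\mathring{P}(\bm a_C,\{i\},C^-\setminus\{i\},C^+)$; circuit minimality collapses every candidate certificate to $\lambda\bm c^C_C$, the surviving sign constraints on $C\setminus\{i\}$ pin down $\lambda\le0$, and $\langle\bm c^C,\bm a\rangle<0$ kills both disjuncts of the infeasibility condition. Your route avoids the facet bookkeeping and circuit-minor argument altogether and gives the full-dimensional relative interior in one step, at the small cost of one extra Farkas application. The only point worth making explicit is that forcing $\lambda\le0$ from the constraints on $C\setminus\{i\}$ requires $C\setminus\{i\}\ne\emptyset$; this is automatic because every circuit of a collection of nonzero vectors has at least two elements, but it deserves a sentence.
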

\begin{proof}
From the proof of \autoref{Separate-Cells}, we have $\mathring{P}(\bm a_C,\emptyset, C^-,C^+)\ne\emptyset$. This means that $P(\bm a_C,\emptyset, C^-,C^+)$ is a closed region of $\mathcal{A}_{\bm a_C}$.  For simplicity, let $R:=P(\bm a_C,\emptyset, C^-,C^+)$. Given a member $i\in C$, suppose $R\subseteq H_{\bm u_i,a_i}^-$. We prove  $\dim(R\cap H_{\bm u_i,a_i})=n-1$ by negation. Suppose $\dim(R\cap H_{\bm u_i,a_i})\le n-2$. If $\dim(R\cap H_{\bm u_i,a_i})=-1$, i.e., $R\cap H_{\bm u_i,a_i}=\emptyset$, then $R\subseteq\mathring{H}_{\bm u_i,a_i}^-$. Equivalently, the following inequality systems
\[
\langle\bm u_j,\bm x\rangle<a_j \For j\in C^+,\quad\langle\bm u_j,\bm x\rangle>a_j \For j\in C^-\setminus\{i\}\quad\And\quad \langle\bm u_i,\bm x\rangle<a_i,
\]
have a solution. As with the proof of \autoref{Separate-Cells}, this is impossible. So $\dim( R\cap H_{\bm u_i,a_i})\ge 0$. Let $G= R\cap H_{\bm u_i,a_i}$ and $\dim(G)=k\le n-2$. From \autoref{Facet-Characterization} and the property (ii) in \autoref{Face-Face}, we have $G=\bigcap_{j\in I_G}(H_{\bm u_j,a_j}\cap R)$ and $i\notin I_G$, where
\[
I_G=\big\{j\in C\mid G\subseteq H_{\bm u_j,a_j}\cap R\And\dim(H_{\bm u_j,a_j}\cap R)=n-1\big\}.
\]
Since $\big\{\bm u_j\mid j\in C\setminus\{i\}\big\}$ is a linearly independent set, then
\[
2\le|I_G|=n-k\le|C|-1\le n.
\]
If $n-k\le |C|-2$, then $I_G\sqcup\{i\}$ contains a circuit $C_0$ of $\mathcal{A}_{\bm o}$ since $G\subseteq H_{\bm u_i,a_i}$. Then $C_0$ is a nonempty proper subset of $C$, a contradiction. If $n-k=|C|-1$, then $\bigcap_{j\in C}H_{\bm u_j,a_j}=G\ne\emptyset$. This means $\bm a\in H_{\bm c^C}$ contradicting   $\bm a\in\mathring{H}_{\bm c^C}^-$. So we obtain $\dim(R\cap H_{\bm u_i,a_i})=n-1$, which finishes the proof.
\end{proof}

\begin{lemma}\label{Restriction-Geometric-Iquivalence}
Suppose $\mathcal{A}_{\bm o}$ is not a multi-arrangement. Let $\bm a,\bm b\in\mathbb{R}^m$ and the parallel translations $\mathcal{A}_{\bm a}$ and $\mathcal{A}_{\bm b}$ of $\mathcal{A}_{\bm o}$ be normally equivalent with a normal map $\Psi$. Then each map $\Psi_i$ defined as
\[
\Psi_i:\mathcal{F}(\mathcal{A}_{\bm a}/H_{\bm u_i,a_i})\to\mathcal{F}(\mathcal{A}_{\bm b}/H_{\bm u_i,b_i}),\quad F\mapsto\Psi_i( F)=\Psi( F),
\]
is a normal map.
\end{lemma}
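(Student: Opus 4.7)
The plan is to identify the faces of the restriction $\mathcal{A}_{\bm a}/H_{\bm u_i,a_i}$ with the faces $F$ of $\mathcal{A}_{\bm a}$ satisfying $F\subseteq H_{\bm u_i,a_i}$, show that $\Psi$ sends such faces bijectively to faces of $\mathcal{A}_{\bm b}$ contained in $H_{\bm u_i,b_i}$ with the index $i$ preserved, and then read off normal equivalence inside the restricted ambient spaces by quotienting the common normal fan by its lineality direction $\mathbb{R}\bm u_i$.

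The first and main task is to verify that $F\subseteq H_{\bm u_i,a_i}$ forces $\Psi(F)\subseteq H_{\bm u_i,b_i}$ with the matching label $i$. I would begin with the case where $F$ is an $(n-1)$-face of $\mathcal{A}_{\bm a}$ on $H_{\bm u_i,a_i}$. Here $\aff(F)=H_{\bm u_i,a_i}$, and a direct computation with the support function shows $N_F(F)=\mathbb{R}\bm u_i$. Normal equivalence forces $N_{\Psi(F)}(\Psi(F))=\mathbb{R}\bm u_i$ as well, so $\Psi(F)$ is an $(n-1)$-face whose affine hull is a hyperplane with normal parallel to $\bm u_i$; but this affine hull must be some $H_{\bm u_j,b_j}\in\mathcal{A}_{\bm b}$, and the non-multi hypothesis on $\mathcal{A}_{\bm o}$ then forces $j=i$. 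The general case reduces to this: given any face $F\subseteq H_{\bm u_i,a_i}$ and any $\bm x\in\relint(F)$, a generic perturbation of $\bm x$ inside $H_{\bm u_i,a_i}$ yields a point whose sign vector lies above $\Sign_{\mathcal{A}_{\bm a}}(F)$ in the order of \autoref{Face-Sign-Order}, vanishes only at coordinate $i$, and (again by non-multi) labels an $(n-1)$-face $F'$ of $\mathcal{A}_{\bm a}$ with $F\subseteq F'\subseteq H_{\bm u_i,a_i}$; then $\Psi(F)\subseteq\Psi(F')\subseteq H_{\bm u_i,b_i}$ by order preservation.

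Since $\Psi^{-1}$ is also a normal map, applying the same argument in reverse yields the opposite implication, so $\Psi_i$ is a well-defined bijection, and order preservation is inherited directly from $\Psi$. To verify the normal-equivalence condition of \autoref{Equivalence-Arrangement}, I would observe that the support function of any polyhedron $F\subseteq H_{\bm u_i,a_i}$ depends on its argument only through the projection onto $\bm u_i^\perp$; consequently every cone in $\mathcal{N}_{\mathbb{R}^n}(F)$ contains $\mathbb{R}\bm u_i$ as lineality, and quotienting by $\mathbb{R}\bm u_i$ identifies $\mathcal{N}_{\mathbb{R}^n}(F)$ with the normal fan of $F$ viewed as a polyhedron in $\bm u_i^\perp$. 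Applying the same quotient to $\mathcal{N}_{\mathbb{R}^n}(\Psi(F))$ and invoking the hypothesis that $F$ and $\Psi(F)$ are normally equivalent in $\mathbb{R}^n$ delivers the required equality of normal fans in the common space $\bm u_i^\perp$, after identifying both $H_{\bm u_i,a_i}$ and $H_{\bm u_i,b_i}$ with $\bm u_i^\perp$ by translation.

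The principal obstacle is precisely the middle step of recovering the index $i$ and not merely the direction $\bm u_i$. The normal cone characterization pins the normal down only up to scalar, and the non-multi hypothesis on $\mathcal{A}_{\bm o}$ is exactly what guarantees that $H_{\bm u_i,b_i}$ is the unique member of $\mathcal{A}_{\bm b}$ with normal parallel to $\bm u_i$, so the labelling is preserved by the restriction procedure.
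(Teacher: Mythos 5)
Your argument is correct and follows the same route as the paper: both pin down the index $i$ by observing that the minimal normal cone of an $(n-1)$-face contained in $H_{\bm u_i,a_i}$ is $\Span(\bm u_i)$, invoke the non-multi hypothesis to conclude $\Psi$ sends such faces into $H_{\bm u_i,b_i}$, then descend to arbitrary faces of the restriction by order-preservation and check the normal-fan condition. Your perturbation argument and the explicit quotient by $\mathbb{R}\bm u_i$ simply spell out steps the paper states more tersely (that every face of the restriction lies in a closed region of it, and that $\Psi_i$ ``is easily checked'' to be a normal map).
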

\begin{proof}
Recall from \autoref{Normal-Cone1} that the normal cones of all closed regions of $\mathcal{A}_{\bm a}/H_{\bm u_i,a_i}$ and $\mathcal{A}_{\bm b}/H_{\bm u_i,b_i}$ are exactly the same vector space $\Span(\bm u_i)$. Together with that $\mathcal{A}_{\bm o}$ is not a multi-arrangement, then the map $\Psi$ bijectively sends the closed regions of $\mathcal{A}_{\bm a}/H_{\bm u_i,a_i}$ to the closed regions of $\mathcal{A}_{\bm b}/H_{\bm u_i,b_i}$. Since every face of $\mathcal{A}_{\bm a}/H_{\bm u_i,a_i}$ is also a face of some closed region of $\mathcal{A}_{\bm a}/H_{\bm u_i,a_i}$, the order-preserving property of $\Psi$ guarantees that $\Psi$ sends the faces of $\mathcal{A}_{\bm a}/H_{\bm u_i,a_i}$ to the faces of $\mathcal{A}_{\bm b}/H_{\bm u_i,b_i}$. Namely, $\Psi_i$ is well-defined. According to the definition of $\Psi_i$ and the property of $\Psi$, we can easily check that $\Psi_i$ is a normal map.
\end{proof}

It is worth noting that given a hyperplane arrangement $\mathcal{A}=\{H_1,\ldots,H_m\}$ in $\mathbb{R}^n$, a face $ F$ of $\mathcal{A}$ can be also represented as
\begin{equation}\label{Separate-Cells1}
 F=P\big(\bm a_{\mathcal{I}(\mathcal{A}, F)},I_{\mathcal{A}}( F),\bar{J}_{\mathcal{A}}( F),\bar{K}_{\mathcal{A}}( F)\big),
\end{equation}
where $\bar{J}_{\mathcal{A}}( F):=\big\{j\in J_{\mathcal{A}}(F)\mid  F\cap H_{\bm u_j,a_j}\ne\emptyset\big\}$, $\bar{K}_{\mathcal{A}}( F):=\big\{k\in K_{\mathcal{A}}(F)\mid F\cap H_{\bm u_k,a_k}\ne\emptyset\big\}$, and
\[
\mathcal{I}(\mathcal{A}, F)=I_{\mathcal{A}}( F)\sqcup \bar{J}_{\mathcal{A}}( F)\sqcup \bar{K}_{\mathcal{A}}( F).
\]
The triple $\big(I_{\mathcal{A}}(F),\bar{J}_{\mathcal{A}}(F),\bar{K}_{\mathcal{A}}(F)\big)$ is referred to as a {\em valid active triple index} of $F$ induced by $\mathcal{A}$. Following \autoref{Normal-Cone1}, it is a obvious fact that the normal cone of  every nonempty face $F_0$ of $F$ is completely determined by the valid active triple index. Thus, the following result is observed.
\begin{lemma}\label{Geometric-Iquivalence-Index}
Suppose $\mathcal{A}_{\bm o}$ is not a multi-arrangement. Let $\bm a,\bm b\in\mathbb{R}^m$. Then the parallel translations $\mathcal{A}_{\bm a}$ and $\mathcal{A}_{\bm b}$ of $\mathcal{A}_{\bm o}$ are normally equivalent if and only if there is an order-preserving bijection $\Psi:\mathcal{F}(\mathcal{A}_{\bm a})\to\mathcal{F}(\mathcal{A}_{\bm b})$ such that  $F,\Psi(F)$ have the same valid active triple index for any $F\in\mathcal{F}(\mathcal{A}_{\bm a})$.
\end{lemma}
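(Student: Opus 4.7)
The plan splits according to the two implications.

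For the backward direction, suppose $\Psi$ is an order-preserving bijection preserving the valid active triple index on every face. Fix a face $F$ and write $F=P(\bm a_{\mathcal{I}(\mathcal{A}_{\bm a},F)},I,\bar J,\bar K)$ as in \eqref{Separate-Cells1}. For every subface $G\le F$, hyperplanes outside $\mathcal{I}(\mathcal{A}_{\bm a},F)$ do not meet $F$, so the active index of $G$ inside the polyhedron $F$ coincides with $I_{\mathcal{A}_{\bm a}}(G)$; applying the valid active triple index hypothesis to $G$ and the corresponding identity on the $\Psi$ side yields $I_{\Psi(F)}(\Psi(G))=I_{F}(G)$. \autoref{Normal-Cone1} then gives $N_F(G)=N_{\Psi(F)}(\Psi(G))$ for every $G$, so $\mathcal{N}(F)=\mathcal{N}(\Psi(F))$ and $\Psi$ is a normal map.

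For the forward direction, assume $\Psi$ is a normal map. First, by \autoref{Restriction-Geometric-Iquivalence} applied to each index $i\in[m]$ for both $\Psi$ and $\Psi^{-1}$, we obtain $I_{\mathcal{A}_{\bm a}}(F)=I_{\mathcal{A}_{\bm b}}(\Psi(F))$ for every face $F$. Since $j\in\mathcal{I}(\mathcal{A}_{\bm a},F)$ is equivalent to the existence of a subface $G\le F$ with $j\in I_{\mathcal{A}_{\bm a}}(G)$, combining this $I$-set preservation with the order-preservation of $\Psi$ gives $\mathcal{I}(\mathcal{A}_{\bm a},F)=\mathcal{I}(\mathcal{A}_{\bm b},\Psi(F))$. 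It remains to show that the partition into $\bar J$ and $\bar K$ is preserved. For each facet $G$ of $F$ with $I_F(G)=I\cup S$, the set $S$ is an equivalence class of facet-defining indices in both $F$ and $\Psi(F)$ (by the $I$-set preservation and the fact that facets are uniquely indexed by their $I$-set), and all members of $S$ share a common sign since $\bm u_s-\bm u_{s'}\in\Span\{\bm u_i:i\in I\}$ renders $\langle\bm u_s,\bm x\rangle-a_s=\langle\bm u_{s'},\bm x\rangle-a_{s'}$ on $\aff(F)$. The outer normal ray of the $S$-facet is then $\epsilon_S\bm u_s$ modulo $\Span\{\bm u_i:i\in I\}$ with $\epsilon_S\in\{+,-\}$, and one concludes $\epsilon_S=\epsilon'_S$ by comparing the rays of $\mathcal{N}(F)$ with those of $\mathcal{N}(\Psi(F))$.

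The main obstacle is this last comparison. The equality $\mathcal{N}(F)=\mathcal{N}(\Psi(F))$ only guarantees that the collection of cones coincides as a set, and distinct equivalence classes can give parallel ray directions modulo $\Span\{\bm u_i:i\in I\}$, so a simultaneous sign flip paired with a reshuffling of classes could a priori preserve the fan as a set. To rule this out one leverages the non-multi-arrangement hypothesis, which controls which $\bm u_s$'s can coincide modulo $\Span\{\bm u_i:i\in I\}$, together with the global consistency of $\Psi$ across the entire face poset: the $I$-set-preserving correspondence on facets propagates via order-preservation to adjacent higher-dimensional faces, so any attempted sign reshuffling creates an incompatibility in $\mathcal{N}(F')$ for some neighbouring face $F'$. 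Carrying out this bookkeeping face by face, and pinning down $\epsilon_S=\epsilon'_S$ uniformly, is the technical heart of the proof.
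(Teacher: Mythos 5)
The backward direction is fine and matches the paper, which treats it as immediate: preserving the valid active triple index at every subface of $F$ forces, via Lemma~\ref{Normal-Cone1}, equality of normal cones and hence $\mathcal{N}(F)=\mathcal{N}(\Psi(F))$.

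For the forward direction there is a genuine gap. You correctly extract the $I$-set preservation $I_{\mathcal{A}_{\bm a}}(F)=I_{\mathcal{A}_{\bm b}}(\Psi(F))$ from Lemma~\ref{Restriction-Geometric-Iquivalence}, and correctly deduce $\mathcal{I}(\mathcal{A}_{\bm a},F)=\mathcal{I}(\mathcal{A}_{\bm b},\Psi(F))$, both of which match the paper's proof. But the decisive part --- showing that an index $j\in\bar{J}_{\mathcal{A}_{\bm a}}(F)$ cannot land in $\bar{K}_{\mathcal{A}_{\bm b}}(\Psi(F))$ --- you leave unfinished: you describe the obstacle (antipodal ray directions modulo $\Span\{\bm u_i:i\in I\}$ in the common normal fan), sketch that it should somehow be ruled out by the non-multi hypothesis plus order-preservation, and then explicitly defer the ``bookkeeping'' as ``the technical heart of the proof.'' That is the part of the lemma that actually needs proving, and deferring it means the proof is incomplete.

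The paper closes this gap by arguing through the support function rather than by comparing ray directions combinatorially. For $j\in\bar{J}_{\mathcal{A}_{\bm a}}(F)$ set $G:=F(F,\bm u_j)=H_{\bm u_j,a_j}\cap F$, a nonempty proper face of $F$ with $j\in I_{\mathcal{A}_{\bm a}}(G)$. Since $\Psi$ is a normal map and $F$, $\Psi(F)$ share the same normal fan, $\Psi(G)=F(\Psi(F),\bm u_j)=H_{\bm u_j,\,h_{\Psi(F)}(\bm u_j)}\cap\Psi(F)$ and $\Psi(F)\subseteq H^-_{\bm u_j,\,h_{\Psi(F)}(\bm u_j)}$. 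On the other hand, the $I$-set preservation (Lemma~\ref{Restriction-Geometric-Iquivalence}) gives $\Psi(G)\subseteq H_{\bm u_j,b_j}$. Hence $H_{\bm u_j,\,h_{\Psi(F)}(\bm u_j)}\cap\Psi(F)\subseteq H_{\bm u_j,b_j}$, which forces $h_{\Psi(F)}(\bm u_j)=b_j$; therefore $\Psi(F)\subseteq H^-_{\bm u_j,b_j}$ with $\Psi(F)\cap H_{\bm u_j,b_j}\neq\emptyset$, i.e.\ $j\in\bar{J}_{\mathcal{A}_{\bm b}}(\Psi(F))$, and symmetrically $\bar{J}_{\mathcal{A}_{\bm a}}(F)=\bar{J}_{\mathcal{A}_{\bm b}}(\Psi(F))$ and likewise for $\bar{K}$. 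This one-line support-function computation is precisely the missing step; your proposal should either supply it or carry out the ray-comparison you sketch in full, verifying that the $I$-set-preserving bijection on facets of $F$ cannot invert any ray of $\mathcal{N}(F)/\Span\{\bm u_i:i\in I\}$.
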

\begin{proof}
The sufficiency is trivial. For necessity, let $\Psi$ be a normal map from $\mathcal{A}_{\bm a}$ to $\mathcal{A}_{\bm b}$. According to \autoref{Equivalence-Arrangement}, it remains to show  that $F$ and $\Psi(F)$ have the same valid active triple index for any $F\in\mathcal{F}(\mathcal{A}_{\bm a})$. First $i\in I_{\mathcal{A}_{\bm a}}( F)$ implies $ F\in \mathcal{F}(\mathcal{A}_{\bm a}/H_{\bm u_i,a_i})$. From \autoref{Restriction-Geometric-Iquivalence}, $\Psi( F)\in \mathcal{F}(\mathcal{A}_{\bm b}/H_{\bm u_i,b_i})$, i.e., $i\in I_{\mathcal{A}_{\bm b}}(\Psi( F))$. Hence, $I_{\mathcal{A}_{\bm a}}( F)\subseteq I_{\mathcal{A}_{\bm b}}(\Psi( F))$. By symmetry, we have $I_{\mathcal{A}_{\bm a}}( F)\supseteq I_{\mathcal{A}_{\bm b}}(\Psi( F))$.  So
\[
I_{\mathcal{A}_{\bm a}}( F)=I_{\mathcal{A}_{\bm b}}(\Psi( F)).
\]
For any $j\in\bar{J}_{\mathcal{A}_{\bm a}}( F)$, $F( F,\bm u_j)=H_{\bm u_j,a_j}\cap F$ is a proper face of $ F$ and $ F\subseteq H_{\bm u_j,a_j}^-$. Since the map $\Psi$ is order-preserving and $F,\Psi(F)$ are normally equivalent, we have
\[
\Psi\big(F( F,\bm u_j)\big)=F\big(\Psi( F),\bm u_j\big)=H_{\bm u_j,h_{\Psi( F)}(\bm u_j)}\cap\Psi( F)
\]
is a proper face of $\Psi( F)$ and $\Psi( F)\subseteq H_{\bm u_j,h_{\Psi( F)}(\bm u_j)}^-$. Applying \autoref{Restriction-Geometric-Iquivalence} again, we get $H_{\bm u_j,h_{\Psi( F)}(\bm u_j)}\cap\Psi( F)\subseteq H_{\bm u_j,b_j}\cap\Psi( F)$. This means  $H_{\bm u_j,b_j}=H_{\bm u_j,h_{\Psi( F)}(\bm u_j)}$, i.e., $j\in \bar{J}_{\mathcal{A}_{\bm b}}(\Psi( F))$. So $\bar{J}_{\mathcal{A}_{\bm a}}( F)\subseteq \bar{J}_{\mathcal{A}_{\bm b}}(\Psi( F))$. By symmetry, we can arrive at $\bar{J}_{\mathcal{A}_{\bm a}}( F)\supseteq \bar{J}_{\mathcal{A}_{\bm b}}(\Psi( F))$. Therefore,
\[
\bar{J}_{\mathcal{A}_{\bm a}}( F)=\bar{J}_{\mathcal{A}_{\bm b}}(\Psi( F)).
\]
Likewise, we can also acquire $\bar{K}_{\mathcal{A}_{\bm a}}( F)=\bar{K}_{\mathcal{A}_{\bm b}}(\Psi( F))$. We complete the proof.
\end{proof}

Given a hyperplane arrangement $\mathcal{A}=\{H_1,\ldots,H_m\}$ in $\mathbb{R}^n$, let $\mathcal{I}(\mathcal{A})$ be the collection of the valid active triple indices of all faces of $\mathcal{A}$. Immediately, \eqref{Separate-Cells1} establishes a natural one-to-one correspondence between the face poset $\mathcal{F}(\mathcal{A})$ and $\mathcal{I}(\mathcal{A})$ by mapping each face of $\mathcal{A}$ to its corresponding valid active triple index. This leads to the following lemma.
\begin{lemma}\label{Deletion-Geometric-Iquivalence}
Suppose $\mathcal{A}_{\bm o}$ is not a multi-arrangement. Let $\bm a,\bm b\in\mathbb{R}^m$ and the parallel translations $\mathcal{A}_{\bm a}$ and $\mathcal{A}_{\bm b}$ of $\mathcal{A}_{\bm o}$ be normally equivalent. Then $\mathcal{A}_{\bm a}\setminus H_{\bm u_i,a_i}$ and $\mathcal{A}_{\bm b}\setminus H_{\bm u_i,b_i}$ are normally equivalent for each $i\in[m]$.
\end{lemma}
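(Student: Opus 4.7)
The plan is to apply \autoref{Geometric-Iquivalence-Index} twice: once forward, to extract from the hypothesis an order-preserving bijection $\Psi\colon\mathcal{F}(\mathcal{A}_{\bm a})\to\mathcal{F}(\mathcal{A}_{\bm b})$ that preserves valid active triple indices, and once backward, at the end, to conclude normal equivalence of the deletions. Since $\mathcal{A}_{\bm o}\setminus H_{\bm u_i}$ inherits the non-multi-arrangement property from $\mathcal{A}_{\bm o}$, \autoref{Geometric-Iquivalence-Index} applies to the deleted arrangements as well, so the task reduces to constructing an order-preserving bijection
\[
\Psi^{(i)}\colon\mathcal{F}(\mathcal{A}_{\bm a}\setminus H_{\bm u_i,a_i})\to\mathcal{F}(\mathcal{A}_{\bm b}\setminus H_{\bm u_i,b_i})
\]
whose corresponding faces share the same valid active triple index (subsets now of $[m]\setminus\{i\}$).

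The next step is to parameterize faces of the deletion in terms of faces of $\mathcal{A}_{\bm a}$. For each $G\in\mathcal{F}(\mathcal{A}_{\bm a}\setminus H_{\bm u_i,a_i})$, its relative interior either lies entirely in one closed half-space of $H_{\bm u_i,a_i}$ (so $G$ itself is a face of $\mathcal{A}_{\bm a}$), or straddles $H_{\bm u_i,a_i}$, in which case one obtains three faces of $\mathcal{A}_{\bm a}$ as the closures of the nonempty pieces $\relint(G)\cap \mathring H_{\bm u_i,a_i}^+$, $\relint(G)\cap H_{\bm u_i,a_i}$, $\relint(G)\cap\mathring H_{\bm u_i,a_i}^-$, denoted $F^+, F^0, F^-$. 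By construction these three faces share the same sign vector on every $j\in[m]\setminus\{i\}$, and $F^0$ is the common facet of $F^\pm$ on $H_{\bm u_i,a_i}$. Define $\Psi^{(i)}(G)$ to be $\Psi(G)$ in the non-straddling case and the union $\Psi(F^+)\cup\Psi(F^0)\cup\Psi(F^-)$ in the straddling case.

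Verifying that $\Psi^{(i)}$ is a well-defined, order-preserving bijection which preserves valid active triple indices reduces to four points. Well-definedness requires that $\Psi(F^+),\Psi(F^0),\Psi(F^-)$ merge into a single face of $\mathcal{A}_{\bm b}\setminus H_{\bm u_i,b_i}$, which follows by transporting the facet relation ``$\Psi(F^0)$ is the common facet of $\Psi(F^\pm)$ on $H_{\bm u_i,b_i}$'' via \autoref{Normal-Polyhedral-Face} and the order preservation of $\Psi$. Matching of valid active triple indices follows by reading off each triple in the deletion from the corresponding triples in $\mathcal{A}_{\bm a}$ (with the index $i$ removed), and then invoking the valid-triple preservation of $\Psi$ together with the non-multi-arrangement hypothesis; the latter ensures, via the dimension formula in \autoref{Face-Characterization}, that the codimension jump from $F^\pm$ to $F^0$ is attributed solely to $\bm u_i$. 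Bijectivity follows by symmetry applied to $\Psi^{-1}$, and order preservation propagates from $\Psi$ together with \autoref{Face-Face}.

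The main obstacle is confirming that in the straddling case, $\Psi(F^+)$ and $\Psi(F^-)$ share sign vectors on $[m]\setminus\{i\}$, since $\Psi$ only preserves valid active triple indices a priori and not full sign vectors. The resolution is that the preserved facet relation between $\Psi(F^0)$ and $\Psi(F^\pm)$ forces any hyperplane $H_{\bm u_j,b_j}$ with $j\neq i$ that would separate $\Psi(F^+)$ from $\Psi(F^-)$ to already contain $\Psi(F^0)$; the non-multi-arrangement assumption, combined with the codimension-one rank jump between $I_{\mathcal{A}_{\bm b}}(\Psi(F^\pm))$ and $I_{\mathcal{A}_{\bm b}}(\Psi(F^0))$ attributable to $\bm u_i$, then rules this out, so the signs must agree. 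Once this sign compatibility is secured, all remaining verifications are routine bookkeeping through \autoref{Face-Face} and \autoref{Face-Characterization}.
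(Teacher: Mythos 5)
Your argument is correct in outline and follows the same two-sided application of \autoref{Geometric-Iquivalence-Index} as the paper (extract a valid-triple-preserving normal map $\Psi$, build a map on the deletions, re-apply \autoref{Geometric-Iquivalence-Index} to conclude), but the middle construction is genuinely different. The paper defines $\Psi^i$ implicitly: $\Psi^i(F)$ is \emph{the} face of $\mathcal{A}_{\bm b}\setminus H_{\bm u_i,b_i}$ carrying the same valid active triple index as $F$, and well-definedness is established in one stroke by observing that deletion of a single hyperplane acts on the collections $\mathcal{I}(\cdot)$ by stripping the index $i$, so $\mathcal{I}(\mathcal{A}_{\bm a})=\mathcal{I}(\mathcal{A}_{\bm b})$ forces $\mathcal{I}(\mathcal{A}_{\bm a}\setminus H_{\bm u_i,a_i})=\mathcal{I}(\mathcal{A}_{\bm b}\setminus H_{\bm u_i,b_i})$. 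You instead build $\Psi^{(i)}$ by hand from the face geometry, decomposing each straddling face of the deletion into the three pieces $F^+,F^0,F^-$ of $\mathcal{A}_{\bm a}$ and gluing their $\Psi$-images. Both work; the paper's version is tighter because it never has to re-glue faces or verify a merging condition, while yours is more transparent about what the new map does geometrically.

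Two cautions about gaps and redundancies in your write-up. First, your well-definedness check covers only the straddling case. In the non-straddling case (where $G$ is a face of both $\mathcal{A}_{\bm a}$ and $\mathcal{A}_{\bm a}\setminus H_{\bm u_i,a_i}$) you still must verify that $\Psi(G)$ is a face of $\mathcal{A}_{\bm b}\setminus H_{\bm u_i,b_i}$ rather than a proper piece of a larger straddling face; this needs the same style of argument you used for the obstacle (any $j\neq i$ separating a putative sibling from $\Psi(G)$ must lie in $I$ of the common facet, hence in $\bar J\cup\bar K$ of $G$, which $\Psi$ preserves). Second, your resolution of the main obstacle is over-engineered: once \autoref{Geometric-Iquivalence-Index} gives preservation of the entire valid active triple index, you immediately have $I_{\mathcal{A}_{\bm b}}(\Psi(F^0))=I_{\mathcal{A}_{\bm a}}(F^0)=I_{\mathcal{A}_{\bm a}}(F^\pm)\cup\{i\}=I_{\mathcal{A}_{\bm b}}(\Psi(F^\pm))\cup\{i\}$, which leaves no room for a separating $j\neq i$ in $I_{\mathcal{A}_{\bm b}}(\Psi(F^0))$; the detour through the rank formula of \autoref{Face-Characterization} and the non-multi-arrangement hypothesis (beyond its role inside \autoref{Geometric-Iquivalence-Index}) is unnecessary. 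Also, the invocation of \autoref{Normal-Polyhedral-Face} to transport the facet relation is not needed: order preservation is part of the definition of a normal map, and it already gives that $\Psi(F^0)$ is a face of $\Psi(F^\pm)$.
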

\begin{proof}
Let $\Psi$ be a normal map from $\mathcal{A}_{\bm a}$ to $\mathcal{A}_{\bm b}$. For every $i\in[m]$, define a map $\Psi^i: \mathcal{F}(\mathcal{A}_{\bm a}\setminus H_{\bm u_i,a_i})\to \mathcal{F}(\mathcal{A}_{\bm b}\setminus H_{\bm u_i,b_i})$  such that
\[
F\mapsto \Psi^i(F)=P\big(\bm b_{\mathcal{I}(\mathcal{A}_{\bm a}\setminus H_{\bm u_i,a_i}, F)},I_{\mathcal{A}_{\bm a}\setminus H_{\bm u_i,a_i}}( F),\bar{J}_{\mathcal{A}_{\bm a}\setminus H_{\bm u_i,a_i}}( F),\bar{K}_{\mathcal{A}_{\bm a}\setminus H_{\bm u_i,a_i}}( F)\big).
\]
The proof now turns to showing that $\Psi^i$ is a normal map. We first prove that $\Psi^i$ is well-defined. Note that $\mathcal{I}(\mathcal{A}_{\bm a}\setminus H_{\bm u_i,a_i})$ ($\mathcal{I}(\mathcal{A}_{\bm b}\setminus H_{\bm u_i,b_i})$, resp.) is obtained from $\mathcal{I}(\mathcal{A}_{\bm a})$ ($\mathcal{I}(\mathcal{A}_{\bm b})$, resp.) by removing the index $i$ from the valid active triple index of every face of $\mathcal{A}_{\bm a}$ ($\mathcal{A}_{\bm b}$, resp.). Following  \autoref{Geometric-Iquivalence-Index},  we have $\mathcal{I}(\mathcal{A}_{\bm a})=\mathcal{I}(\mathcal{A}_{\bm b})$. Then we further deduce
\begin{equation}\label{EQ00}
\mathcal{I}(\mathcal{A}_{\bm a}\setminus H_{\bm u_i,a_i})=\mathcal{I}(\mathcal{A}_{\bm b}\setminus H_{\bm u_i,b_i}).
\end{equation}
This means that $\Psi^i(F)$ is indeed a face of $\mathcal{A}_{\bm b}\setminus H_{\bm u_i,b_i}$ for each face of $\mathcal{A}_{\bm a}\setminus H_{\bm u_i,a_i}$. Namely, $\Psi^i$ is well-defined. Furthermore, we derive from \eqref{Separate-Cells1} and \eqref{EQ00} that $\Psi^{i}$ is a bijection. Suppose $F_1,F_2$ are the faces of $\mathcal{A}_{\bm a}$ and $F_1$ is a nonempty face of $F_2$. Then
\[
I_{\mathcal{A}_{\bm a}}(F_1)\supseteq I_{\mathcal{A}_{\bm a}}(F_2),\quad \bar{J}_{\mathcal{A}_{\bm a}}(F_1)\subseteq \bar{J}_{\mathcal{A}_{\bm a}}(F_2)\quad\And\quad \bar{K}_{\mathcal{A}_{\bm a}}(F_1)\subseteq \bar{K}_{\mathcal{A}_{\bm a}}(F_2).
\]
Together with the definition of $\Psi^i$ and \eqref{EQ00}, we obtain
\[
I_{\mathcal{A}_{\bm b}}(\Psi^i(F_1))\supseteq I_{\mathcal{A}_{\bm b}}(\Psi^i(F_2)),\; {\bar J}_{\mathcal{A}_{\bm b}}(\Psi^i(F_1))\subseteq \bar{J}_{\mathcal{A}_{\bm b}}(\Psi^i(F_2))\And \bar{K}_{\mathcal{A}_{\bm b}}(\Psi^i(F_1))\subseteq \bar{K}_{\mathcal{A}_{\bm b}}(\Psi^i(F_2)).
\]
This implies that $\Psi^i(F_1)$ is a face of $\Psi^i(F_2)$ by \eqref{Separate-Cells1}. Namely, $\Psi^i$ is order-preserving. It follows from \autoref{Geometric-Iquivalence-Index} that $\Psi^i$ is a normal map. We finish the proof.
\end{proof}

We now return to \autoref{Geometric-Face}.
\begin{corollary}\label{Geometric-Face}
Suppose $\mathcal{A}_{\bm o}$ is not a multi-arrangement. Let $\bm a,\bm b\in\mathbb{R}^m$. Then $\mathcal{A}_{\bm a}$ and $\mathcal{A}_{\bm b}$ are normally equivalent if and only if $\mathcal{A}_{\bm a}$ and $\mathcal{A}_{\bm b}$ are sign equivalent.
\end{corollary}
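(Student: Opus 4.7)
The plan is to handle the two directions of the equivalence asymmetrically: the sufficiency falls out immediately from results already established in the excerpt, while the necessity needs a contradiction argument built from several geometric lemmas developed earlier in this subsection. For the sufficiency, if $\mathcal{A}_{\bm a}$ and $\mathcal{A}_{\bm b}$ are sign equivalent, then the implication \ii$\Rightarrow$\i of \autoref{Three-Type-Sign} places $\bm a$ and $\bm b$ in a common open face of $\delta\mathcal{A}_{\bm o}$, and part \i of \autoref{Three-Type-Normal-Combinatorial} then yields the normal equivalence of $\mathcal{A}_{\bm a}$ and $\mathcal{A}_{\bm b}$.

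For the necessity, I would argue by contradiction. Assume $\mathcal{A}_{\bm a}$ and $\mathcal{A}_{\bm b}$ are normally equivalent but not sign equivalent. By the equivalence \i$\Leftrightarrow$\ii of \autoref{Three-Type-Sign}, the failure of sign equivalence produces some derived hyperplane $H_{\bm c^C}\in\delta\mathcal{A}_{\bm o}$ separating $\bm a$ from $\bm b$; after possibly swapping $\bm a$ with $\bm b$ and replacing $\bm c^C$ by $-\bm c^C$, we may arrange $\bm a\in\mathring{H}_{\bm c^C}^-$ and $\bm b\in H_{\bm c^C}^+$. Iterating \autoref{Deletion-Geometric-Iquivalence} to strip away the hyperplanes indexed by $[m]\setminus C$, the subarrangements $\mathcal{A}_{\bm a_C}$ and $\mathcal{A}_{\bm b_C}$ remain normally equivalent via some normal map $\Psi$. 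Now \autoref{Separate-Regions} guarantees that $R_{\bm a}:=P(\bm a_C,\emptyset,C^-,C^+)$ is a full-dimensional region of $\mathcal{A}_{\bm a_C}$ in which every hyperplane indexed by $C$ contributes a facet, so its valid active triple index equals $(\emptyset,C^-,C^+)$. By \autoref{Geometric-Iquivalence-Index}, the image $\Psi(R_{\bm a})$ is a full-dimensional region of $\mathcal{A}_{\bm b_C}$ carrying the same valid active triple index, which forces $\Psi(R_{\bm a})\subseteq P(\bm b_C,\emptyset,C^-,C^+)$ and hence $\mathring{P}(\bm b_C,\emptyset,C^-,C^+)\ne\emptyset$. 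This collides with \autoref{Separate-Cells}, whose Farkas-based proof shows that $\mathring{P}(\bm b_C,\emptyset,C^-,C^+)=\emptyset$ whenever $\bm b\in H_{\bm c^C}^+$, delivering the desired contradiction.

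The principal obstacle will be justifying that the coincidence of valid active triple indices supplied by \autoref{Geometric-Iquivalence-Index} is really strong enough to force both the containment $\Psi(R_{\bm a})\subseteq P(\bm b_C,\emptyset,C^-,C^+)$ and the full-dimensionality of $\Psi(R_{\bm a})$; this is precisely where the non-multi-arrangement hypothesis on $\mathcal{A}_{\bm o}$ is spent, since both \autoref{Deletion-Geometric-Iquivalence} and \autoref{Geometric-Iquivalence-Index} rely on it. A secondary but milder concern is that the boundary situation $\bm b\in H_{\bm c^C}$ must be treated on the same footing as the strict case $\bm b\in\mathring{H}_{\bm c^C}^+$, but this is already absorbed by the non-strict hypothesis of \autoref{Separate-Cells}, so no separate case analysis is required.
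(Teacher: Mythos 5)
Your proof is correct and matches the paper's argument essentially step for step: the sufficiency direction is deduced from \autoref{Three-Type-Normal-Combinatorial}\i and \autoref{Three-Type-Sign}\i$\Leftrightarrow$\ii, and the necessity direction is the same contradiction argument chaining \autoref{Deletion-Geometric-Iquivalence}, \autoref{Separate-Regions}, \autoref{Geometric-Iquivalence-Index}, and \autoref{Separate-Cells}. The worries you flag at the end are unfounded: once \autoref{Geometric-Iquivalence-Index} equates valid active triple indices, the identification $\Psi(R_{\bm a})=P(\bm b_C,\emptyset,C^-,C^+)$ and its full-dimensionality follow immediately from \eqref{Separate-Cells1} and the fact that $I_{\mathcal{A}_{\bm b_C}}(\Psi(R_{\bm a}))=\emptyset$.
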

\begin{proof}
First (i) in \autoref{Three-Type-Normal-Combinatorial} and (i), (ii) in \autoref{Three-Type-Sign} directly obtain the sufficiency. For necessity, applying (i) and (ii) in \autoref{Three-Type-Sign} again, it remains to show that the vectors $\bm a,\bm b$ belong to the same open face of $\delta\mathcal{A}_{\bm o}$. We prove it by contradiction. Suppose $\bm a\in\mathring{H}_{\bm c^C}^-$ and $\bm b\in H_{\bm c^C}^+$ for some $C\in\mathcal{C}(\mathcal{A}_{\bm o})$ and $H_{\bm c^C}\in\delta\mathcal{A}_{\bm o}$. It follows from \autoref{Separate-Regions} that $P(\bm a_C,\emptyset, C^-,C^+)$ is a closed region of the subarrangement $\mathcal{A}_{\bm a_C}$ and $\dim\big(H_{\bm u_i,a_i}\cap P(\bm a_C,\emptyset, C^-,C^+)\big)=n-1$ for each $i\in C$. Note from \autoref{Deletion-Geometric-Iquivalence} that  subarrangements $\mathcal{A}_{\bm a_C}$ and $\mathcal{A}_{\bm b_C}$ are normally equivalent. This implies that $P(\bm b_C,\emptyset, C^-,C^+)$ must be a closed region of the subarrangement $\mathcal{A}_{\bm b_C}$ and $\dim\big(H_{\bm u_i,b_i}\cap P(\bm b_C,\emptyset, C^-,C^+)\big)=n-1$ for each $i\in C$. This further means $\Sign(\mathcal{A}_{\bm b})\cap S_{\bm c^C}\ne\emptyset$, which contradicts \autoref{Separate-Cells}. So we complete the proof.
\end{proof}

In general, the normally equivalent real hyperplane arrangements are not necessarily sign equivalent. For example, let
\[
\mathcal{A}=\{H_1:x=0,H_2:y=0,H_3:x+y=2\}
\]
and
\[
\mathcal{A}'=\{H_1':y=0,H_2':x=0,H_3':-x-y=-2\}
\]
be hyperplane arrangements in $\mathbb{R}^2$. Clearly $\mathcal{A}$ and $\mathcal{A}'$ are normally equivalent, but their signs are not same. However, $\mathcal{A}$ and $\mathcal{A}'$ share the same sign up to reorientation and relabeling. For general result, see \autoref{Geometric-Sign}. Given a hyperplane $H_{\bm u,a}$ in $\mathbb{R}^n$, denoted by $-H_{\bm u,a}:=H_{-\bm u,-a}$. Let $\mathcal{A}=\{H_1,\ldots,H_m\}$ be a hyperplane arrangement in $\mathbb{R}^n$. For any subset $A\subseteq[m]$, we use $_{-A}\mathcal{A}$ to denote a {\em reorientation} of $\mathcal{A}$ obtained from $\mathcal{A}$ by reversing the orientations on $A$, that is,
\[
_{-A}\mathcal{A}:=\mathcal{A}\setminus\{H_i\mid i\in A\}\cup\{-H_i\mid i\in A\}.
\]
Given a permutation $\pi\in\mathfrak{S}_m$, define $\pi(\mathcal{A}):=\big\{H_{\pi(i)}\mid i\in[m]\big\}$, which is referred to as a {\em relabeling} of $\mathcal{A}$.
\begin{theorem}\label{Geometric-Sign}
Let $\mathcal{A}=\big\{H_{\bm u_i,a_i}\mid i\in[m]\big\}$ and $\mathcal{A}'=\big\{H_{\bm v_i,b_i}\mid i\in[m]\big\}$ be two hyperplane arrangements in $\mathbb{R}^n$. Then $\mathcal{A}$ and $\mathcal{A}'$ are normally equivalent if and only if $\mathcal{A}$ and $\mathcal{A}'$ are the parallel translations of $\mathcal{A}_{\bm o}$ with the same sign up to reorientation and relabeling.
\end{theorem}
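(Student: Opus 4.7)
The plan is to prove the biconditional, treating sufficiency as a short reduction and concentrating the work on necessity, where the normal map has to be unpacked into a relabeling and a reorientation.

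For sufficiency I would note that reorientation (replacing $H_{\bm u,a}$ by $H_{-\bm u,-a}$) and relabeling leave the underlying set of hyperplanes in $\mathbb{R}^n$ unchanged, so they preserve the face poset of $\mathcal{A}$ and the normal fan of each individual face; in particular, normal equivalence is invariant under them. It then suffices to show that two parallel translations $\mathcal{A}_{\bm a}$ and $\mathcal{A}_{\bm b}$ of $\mathcal{A}_{\bm o}$ with $\Sign(\mathcal{A}_{\bm a})=\Sign(\mathcal{A}_{\bm b})$ are normally equivalent, which follows by combining $\mathrm{(i)}\Leftrightarrow\mathrm{(ii)}$ of \autoref{Three-Type-Sign} (sign equivalence iff same open face of $\delta\mathcal{A}_{\bm o}$) with $\mathrm{(i)}$ of \autoref{Three-Type-Normal-Combinatorial} (same open face forces normal equivalence).

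For necessity, starting from a normal map $\Psi:\mathcal{F}(\mathcal{A})\to\mathcal{F}(\mathcal{A}')$, the main step will be to construct a permutation $\pi\in\mathfrak{S}_m$ and scalars $c_i\ne 0$ with $\bm v_{\pi(i)}=c_i\bm u_i$. For each $i\in[m]$ I would first pick a codim-$1$ face $F$ of $\mathcal{A}$ with $I_{\mathcal{A}}(F)=\{i\}$, which exists because $H_{\bm u_i,a_i}$ is not covered by the other hyperplanes. The lineality space of $\mathcal{N}(F)$ equals the normal line $\Span\{\bm u_i\}$ to $\aff(F)$, and by normal equivalence so does that of $\mathcal{N}(\Psi(F))$; under the standing assumption that $\mathcal{A}'$ has no coincident hyperplanes, $\Psi(F)$ therefore lies on a unique hyperplane $H_{\bm v_k,b_k}$ whose normal is proportional to $\bm u_i$. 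That $k=:\pi(i)$ depends only on $i$ is a connectivity argument: the codim-$1$ faces of $\mathcal{A}$ lying only on $H_{\bm u_i,a_i}$ are precisely the regions of $\mathcal{A}/H_{\bm u_i,a_i}$, whose adjacency graph is connected; for adjacent $F_1,F_2$ sharing a codim-$2$ face $G\subseteq H_{\bm u_i,a_i}\cap H_{\bm u_j,a_j}$, the lineality of $\mathcal{N}(\Psi(G))$ forces $\Psi(G)$ to sit on exactly two hyperplanes of $\mathcal{A}'$ with normals spanning $\Span\{\bm u_i,\bm u_j\}$, of which only one has normal parallel to $\bm u_i$, so the order-preserving property of $\Psi$ places both $\Psi(F_1),\Psi(F_2)$ on that common hyperplane. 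Running the same argument on $\Psi^{-1}$ produces an inverse, so $\pi$ is bijective.

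To finish, set $\epsilon_i:=\Sign(c_i)$ and $A:=\{i:\epsilon_i=-1\}$. The arrangement $\mathcal{A}''':={}_{-A}\pi(\mathcal{A}')$ has $i$-th hyperplane $H_{\epsilon_ic_i\bm u_i,\,\epsilon_ib_{\pi(i)}}=H_{\bm u_i,\,\epsilon_ib_{\pi(i)}/|c_i|}$, so both $\mathcal{A}$ and $\mathcal{A}'''$ are parallel translations of $\mathcal{A}_{\bm o}=\{H_{\bm u_i}\}_{i=1}^m$; since reorientation and relabeling preserve the face poset and normal fans, $\mathcal{A}$ and $\mathcal{A}'''$ remain normally equivalent, and \autoref{Geometric-Face} promotes this to $\Sign(\mathcal{A})=\Sign(\mathcal{A}''')$, as required. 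The main obstacle will be the well-definedness and bijectivity of $\pi$: parallel-hyperplane degeneracies in $\mathcal{A}'$ must be ruled out via the lineality analysis at codim-$2$ faces, and passing from the pointwise identification of $\pi(i)$ to a global permutation rests essentially on the region-adjacency connectivity of each restriction $\mathcal{A}/H_{\bm u_i,a_i}$.
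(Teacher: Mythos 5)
Your sufficiency argument is correct and essentially the paper's: WLOG take $\pi=\mathrm{id}$, $A=\emptyset$, then chain $\mathrm{(i)}\Leftrightarrow\mathrm{(ii)}$ of Theorem~\ref{Three-Type-Sign} with $\mathrm{(i)}$ of Theorem~\ref{Three-Type-Normal-Combinatorial}. Your explicit construction of $\pi$ and the scalars $c_i$ through the lineality spaces of normal cones at codim-$1$ faces, glued via connectivity of the region-adjacency graph of $\mathcal{A}/H_{\bm u_i,a_i}$ at codim-$2$ faces, is actually more constructive and transparent than the paper's terse ``we can derive from Lemmas~\ref{Normal-Cone1} and \ref{Restriction-Geometric-Iquivalence}''; both routes arrive at the same reorientation/relabeling data.

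There is, however, a genuine gap in your final step. After producing $\mathcal{A}'''={}_{-A}\pi(\mathcal{A}')=\mathcal{A}_{\bm d}$, you invoke Corollary~\ref{Geometric-Face} to pass from ``$\mathcal{A}_{\bm a}$ and $\mathcal{A}_{\bm d}$ normally equivalent'' to ``$\Sign(\mathcal{A}_{\bm a})=\Sign(\mathcal{A}_{\bm d})$.'' But that corollary is stated \emph{only} under the hypothesis that $\mathcal{A}_{\bm o}$ is not a multi-arrangement, and this hypothesis cannot be discharged here: the paper explicitly allows $\mathcal{A}_{\bm o}$ to have repeated hyperplanes (i.e.\ some $\bm u_i\parallel\bm u_j$ with $i\ne j$), and in that situation the biconditional in Corollary~\ref{Geometric-Face} is false. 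For instance, with $m=2$, $\bm u_1=\bm u_2$, $\bm a=(0,1)$ and $\bm b=(1,0)$, the arrangements $\mathcal{A}_{\bm a}$ and $\mathcal{A}_{\bm b}$ are normally equivalent (they are the same two hyperplanes) yet have different signs. The point of Theorem~\ref{Geometric-Sign} is precisely that a further relabeling/reorientation fixes this, but that conclusion has to be extracted from the extra structure your $\pi$ provides (the normal map respects hyperplane indices after relabeling), not from the blanket corollary. The paper handles this by splitting on the circuit size: for $|C|\ge 3$ the normals in the circuit are pairwise non-parallel, so the Geometric-Face machinery (Lemmas~\ref{Separate-Regions} and \ref{Deletion-Geometric-Iquivalence}) applies; for $|C|=2$ — exactly the parallel-normal case — it gives a separate direct argument showing the two possible positions of the parallel pair $\{H_{\bm u_1,d_1},H_{\bm u_2,d_2}\}$ are incompatible with the index-respecting normal map. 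Your proof needs an analogous step; as written, the last inference is unjustified precisely when $\mathcal{A}_{\bm o}$ has parallel normals.
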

\begin{proof}
For sufficiency, without loss of generality, suppose the relabeling $\pi$ is the identity element of $\mathfrak{S}_m$ and the reorientation $A=\emptyset$. Together with (i) in \autoref{Three-Type-Normal-Combinatorial} and (i), (ii) in \autoref{Three-Type-Sign}, we have that $\mathcal{A}$ and $\mathcal{A}'$ are normally equivalent.

For necessity, it reduces to proving that there exist a subset $A\subseteq[m]$ and a permutation $\pi\in\mathfrak{S}_m$ such that $\mathcal{A}$, $_{-A}\pi(\mathcal{A}')$ are the parallel translations of $\mathcal{A}_{\bm o}$ and $\Sign(\mathcal{A})=\Sign\big(_{-A}\pi(\mathcal{A}')\big)$.
Since $\mathcal{A}$ and $\mathcal{A}'$ are normally equivalent with a normal map $\Psi$, we can derive from \autoref{Normal-Cone1} and \autoref{Restriction-Geometric-Iquivalence} that there exist a permutation $\pi\in\mathfrak{S}_m$ and $m$ nonzero real numbers $\lambda_1,\ldots,\lambda_m$ such that
\begin{itemize}
\item [{\rm (a)}] $\mathcal{A}$ and $\mathcal{A}_{\bm d}$ are normally equivalent with the same normal map $\Psi$;
\item [{\rm (b)}] $\mathcal{A}/H_{\bm u_i,a_i}$ and $\mathcal{A}_{\bm d}/H_{\bm u_i,d_i}$ are normally equivalent with the normal map $\Psi _i$ defined as in \autoref{Restriction-Geometric-Iquivalence},
\end{itemize}
where  $\bm u_i=\lambda_i\bm v_{\pi^{-1}(i)}$ for each $i\in[m]$ and $d_i=\lambda_ib_{\pi^{-1}(i)}$ for each $i\in[m]$. Let $A=\big\{i\in[m]\mid \lambda_i<0\big\}$. Then $_{-A}\pi(\mathcal{A}')$ can be rewritten as $_{-A}\pi(\mathcal{A}')=\big\{H_{\bm u_i,d_i}\mid i\in[m]\big\}$. Hence, $\mathcal{A}=\mathcal{A}_{\bm a}$ and  $_{-A}\pi(\mathcal{A}')=\mathcal{A}_{\bm d}$ are the parallel translations of $\mathcal{A}_{\bm o}$. In this sense, we assert
\[
\Sign(\mathcal{A}_{\bm a})=\Sign(\mathcal{A}_{\bm d}).
\]
Following (i) and (ii) in \autoref{Three-Type-Sign}, it is equivalent to showing that $\bm a$ and $\bm d$ belong to the same open face of $\delta\mathcal{A}_{\bm o}$. Otherwise, suppose $\bm a\in\mathring{H}_{\bm c^C}^-$ and $\bm d\in H_{\bm c^C}^+$ for some $C\in\mathcal{C}(\mathcal{A}_{\bm o})$. Then $|C|\ge2$. If $|C|\ge 3$, then $\Sign(\mathcal{A}_{\bm a})=\Sign(\mathcal{A}_{\bm d})$ can be proved by the same method as employed in \autoref{Geometric-Face}. If $|C|=2$, suppose $C=\{1,2\}$, $\bm u_1=\bm u_2$ and $\bm c^C=(c_1,\ldots,c_m)$, where each $c_i$ equals $0$ except for $c_1=1$ and $c_2=-1$. From \autoref{Separate-Cells}, $\mathcal{A}_{\bm a_C}$ and $\mathcal{A}_{\bm d_C}$  can be illustrated as follows:
\begin{figure}[H]
\centering
\begin{tikzpicture}[scale=1,line width=1pt]
\draw [black,-](0,0) -- (0,1) node [above, black] {$H_{\bm u_1,a_1}$};
\draw [black,-](1.5,0) -- (1.5,1) node [above, black] {$H_{\bm u_2,a_2}$};
\draw (0.75,-0.5) node[black] {$\mathcal{A}_{\bm a_C}$};
\draw [black,densely dotted](3,-0.8) -- (3,1.7);
\draw [black,-](5.5,0) -- (5.5,1) node [above, black] {$H_{\bm u_1,d_1}=H_{\bm u_2,d_2}$};
\draw (5.5,-0.5) node[black] {$\mathcal{A}_{\bm d_C}\,(\bm d\in H_{\bm c^C})$};
\draw (7.25,0.5) node[black]{or};
\draw [black,-](8.5,0) -- (8.5,1) node [above, black] {$H_{\bm u_2,d_2}$};
\draw [black,-](10,0) -- (10,1) node [above, black] {$H_{\bm u_1,d_1}$};
\draw (9.5,-0.5) node[black] {$\mathcal{A}_{\bm d_C}\,(\bm d\in \mathring{H}^+_{\bm c^C})$};
\draw (5,-1.5) node[black] {Fig-1};
\end{tikzpicture}
\end{figure}
\noindent Similar to \autoref{Deletion-Geometric-Iquivalence}, we can deduce that $\mathcal{A}_{\bm a_C}$ and $\mathcal{A}_{\bm d_C}$ are normally equivalent. By Fig-1, the case $\bm d\in H_{\bm c^C}$ is impossible. When $\bm d\in \mathring{H}^+_{\bm c^C}$,  from (a) and (b), we achieve
\[
\Psi\big(F( R,\bm u_2)\big)=F\big(\Psi( R),\bm u_2\big)\subseteq H_{\bm u_2,d_2}
\]
for any closed region $R\subseteq H_{\bm u_2,a_2}^-$ of $\mathcal{A}_{\bm a}$ bounded by the hyperplane $H_{\bm u_2,a_2}$. It implies that  $\mathcal{A}_{\bm d_C}$ must become the following form:
\begin{figure}[H]
\centering
\begin{tikzpicture}[scale=1,line width=1pt]
\draw [black,-](0,0) -- (0,1) node [above, black] {$H_{\bm u_1,d_1}$};
\draw [black,-](1.5,0) -- (1.5,1) node [above, black] {$H_{\bm u_2,d_2}$};
\draw (0.75,-0.5) node[black] {$\mathcal{A}_{\bm d_C}$};
\end{tikzpicture}
\end{figure}
\noindent This is in contradiction with Fig-1. So,  the vectors $\bm a$ and $\bm d$ belong to the same open face of $\delta\mathcal{A}_{\bm o}$ in this case. We complete the proof.
\end{proof}
\subsection{Conings and elementary lifts}\label{Sec-4-3}
We begin by proving the remaining results in \autoref{Three-Type-Sign}. Let $\mathcal{A}=\{H_1,\ldots,H_m\}$ be an affine arrangement in $\mathbb{R}^n$. In fact, the faces of $\mathcal{A}$ are in correspondence with those faces of $c\mathcal{A}$ that are not contained in the negative half-space $K_0^-$. More specifically, let
\[
\mathcal{F}^+(c\mathcal{A}):=\big\{F\in\mathcal{F}(c\mathcal{A})\mid F\subseteq K_0^+\And F\nsubseteq K_0\big\}
\]
and
\[
\Sign^+(c\mathcal{A}):=\big\{\Sign_{c\mathcal{A}}(F)\mid F\in\mathcal{F}^+(c\mathcal{A})\big\}.
\]
Then there is a one-to-one correspondence $\kappa$ between $\mathcal{F}(\mathcal{A})$ to $\mathcal{F}^+(c\mathcal{A})$ sending each face $F$ of $\mathcal{A}$ to the face $\kappa(F)$ of $c\mathcal{A}$ given by
\[
\kappa(F):=\Big(\bigcap_{i\in I_{\mathcal{A}}(F)}cH_{\bm u_i,a_i}\Big)\bigcap\Big(\bigcap_{j\in J_{\mathcal{A}}(F)}cH_{\bm u_j,a_j}^-\Big)\bigcap\Big(\bigcap_{k\in K_{\mathcal{A}}(F)}cH_{\bm u_k,a_k}^+\Big)\bigcap K_0^+.
\]
Without causing confusion, suppose the linear hyperplane $K_0$ is labelled by $m+1$. The bijection $\kappa$ naturally gives rise to the following bijection
\begin{equation}\label{Bijection}
\epsilon: \Sign(\mathcal{A}) \to \Sign^+(c\mathcal{A}), \quad \Sign_{\mathcal{A}}(F)\mapsto\Sign_{c\mathcal{A}}\big(\kappa(F)\big).
\end{equation}
Based on \eqref{Face-Sign-Vector} and the bijection $\epsilon$, for each $F$ of $\mathcal{A}$, we have
\begin{equation}\label{Sign-Coning}
\Sign_{\mathcal{A}}(F)_i=\Sign_{c\mathcal{A}}\big(\kappa(F)\big)_i\,\For i=1,2,\ldots,m\quad\And\quad \Sign_{c\mathcal{A}}\big(\kappa(F)\big)_{m+1}=+.
\end{equation}

Additionally, since $c\mathcal{A}$ is a linear arrangement, every face $F$ of $c\mathcal{A}$ has an {\em opposite face} $F^{op}$ given by
\[
F^{op}:=\{-\bm x\mid \bm x\in F\},
\]
which is also a face of $c\mathcal{A}$. Naturally, the sign vector of $F^{op}$ is obtained by reversing the sign vector of $F$, that is,
\begin{equation}\label{Sign-Opposite-Face}
\Sign_{c\mathcal{A}}(F^{op})_i=-\Sign_{c\mathcal{A}}(F)_i.
\end{equation}
Let $\mathcal{F}^-(c\mathcal{A})$ be the set of the opposite faces of the faces in $\mathcal{F}^+(c\mathcal{A})$, $O=\bigcap_{i=1,\ldots,m}cH_i\bigcap K_0$, and $G,G^{op}$ be other two faces of $c\mathcal{A}/K_0$. Clearly
\begin{equation}\label{Coning-Face-Number}
\mathcal{F}(c\mathcal{A})=\mathcal{F}^+(c\mathcal{A})\sqcup \mathcal{F}^-(c\mathcal{A})\sqcup\{O,G,G^{op}\}.
\end{equation}
Let $G^+\in\mathcal{F}^+(c\mathcal{A})$ be the closed region bounded by $G$. Then each $i$-th component $\Sign_{c\mathcal{A}}(G)_i$ of the sign vector $\Sign_{c\mathcal{A}}(G)$ equals the $i$-th component $\Sign_{c\mathcal{A}}(G^+)_i$ of the sign vector $\Sign_{c\mathcal{A}}(G^+)$ except for $\Sign_{c\mathcal{A}}(G)_{m+1}=0$. Together with \eqref{Sign-Coning}, \eqref{Sign-Opposite-Face} and \eqref{Coning-Face-Number}, we conclude that the signs of $\mathcal{A}$ and $c\mathcal{A}$ are uniquely determined by each other. This directly shows that (i) is equivalent to (iii) in \autoref{Three-Type-Sign}, that is, for any vectors $\bm a,\bm b\in\mathbb{R}^m$, the conings $c\mathcal{A}_{\bm a}$ and $c\mathcal{A}_{\bm b}$ are sign equivalent if and only if $\bm a$ and $\bm b$ belong to the same open face of $\delta\mathcal{A}_{\bm o}$.

Further noticing $ c H_{\bm u_i,a_i}=H_{(\bm u_i,a_i)}$, so the elementary lift $\mathcal{A}^{\bm a}$ of $\mathcal{A}_{\bm o}$ is obtained from the coning $c\mathcal{A}_{\bm a}$ of $\mathcal{A}_{\bm a}$ by deleting the linear hyperplane $K_0$, i.e.,
\[
\mathcal{A}^{\bm a}=c\mathcal{A}_{\bm a}\setminus K_0.
\]
Let $O$ be the intersection of all hyperplanes in $c\mathcal{A}_{\bm a}$, $G$ be one of faces of $c\mathcal{A}_{\bm a}/K_0$ that does not lie in any $cH_{\bm u_i,a_i}$, and $G^+\in\mathcal{F}^+(c\mathcal{A}_{\bm a})$ and $G^-$ be closed regions bounded by $G$.
First note that if $\mathcal{A}^{\bm a}$ contains only the single hyperplane $H_{(\bm u_1,a_1)}$, then $\mathcal{F}(\mathcal{A}^{\bm a})$ consists of the liner hyperplane $H_{(\bm u_1,a_1)}$ and the two half-spaces $H_{(\bm u_1,a_1)}^+$, and $H_{(\bm u_1,a_1)}^-$. When $|\mathcal{A}^{\bm a}|\ge 2$,
comparing $\mathcal{A}_{\bm a}$ and $c\mathcal{A}_{\bm a}$,  the only minor change to the face poset $\mathcal{F}(\mathcal{A}_{\bm a})$ is as follows: $\mathcal{F}(\mathcal{A}^{\bm a})$ is obtained from  $\mathcal{F}(c\mathcal{A}_{\bm a})$ by removing its seven members $O,G,G^+,G^-,G^{op},(G^+)^{op}$ and $(G^-)^{op}$, and adding new ones $\bar{G}=G^+\cup G\cup G^-$, $\bar{G}^{op}=(G^+)^{op}\cup G^{op}\cup(G^-)^{op}$ and $O^{\bm a}$, where $O^{\bm a}$ is the intersection of all hyperplanes in $\mathcal{A}^{\bm a}$. Namely, when $|\mathcal{A}^{\bm a}|\ge 2$, we have
\begin{equation}\label{Elementary-Lift-Face-Number}
\mathcal{F}(\mathcal{A}^{\bm a})=\mathcal{F}^{+}(c\mathcal{A}_{\bm a})\setminus\{G^+,(G^-)^{op}\}\sqcup\mathcal{F}^{-}(c\mathcal{A}_{\bm a})\setminus\{G^-,(G^+)^{op}\}\sqcup\{\bar{G},\bar{G}^{op},O^{\bm a}\}.
\end{equation}
Immediately, we deduce
\[
\Sign(\mathcal{A}^{\bm a})=\Big\{\big(\Sign_{c\mathcal{A}}(F)\big)_{[m]}\mid F\in\mathcal{F}(c\mathcal{A}_{\bm a})\Big\},
\]
where $\big(\Sign_{c\mathcal{A}}(F)\big)_{[m]}$ is the subvector of $\Sign_{c\mathcal{A}}(F)$ with indices in $[m]$. Thus the result that (i) are equivalent (iv) in \autoref{Three-Type-Sign} each other, is directly established  by that (i) is equivalent to (iii) in \autoref{Three-Type-Sign}.

Below we present a small example to illustrate relations among the faces of the parallel translation $\mathcal{A}_{\bm a}$, coning $c\mathcal{A}_{\bm a}$ and elementary lift $\mathcal{A}^{\bm a}$.
\begin{example}
{\rm Let $\mathcal{A}_{\bm o}=\{H_{\bm u_1}=0,H_{\bm u_2}=0\}$ be a hyperplane arrangement in $\mathbb{R}$. A parallel translation $\mathcal{A}_{\bm a}$,  a coning $c\mathcal{A}_{\bm a}$, an elementary lift $\mathcal{A}^{\bm a}$ of $\mathcal{A}_{\bm o}$,  and the sign vectors of their regions are illustrated below.
\begin{figure}[H]
\centering
\begin{tikzpicture}[scale=1.2,line width=1pt]
\draw [black,-](0,2) -- (3,2) node [right, black] {$\mathbb{R}$};
\draw (1.5,-0.5) node {$\mathcal{A}_{\bm a}$};
\draw (0.8,2.5) node {$H_{\bm u_1,a_1}$};
\draw (2.2,2.5) node {$H_{\bm u_2,a_2}$};
\draw (0.5,1.8) node[red] {$--$};
\draw (1.5,1.8) node[red] {$+-$};
\draw (2.5,1.8) node[red] {$++$};
\filldraw (1,2)circle (.05) (2,2)circle (.05);
\draw [black,-](4,1) -- (7,1) node [right, black] {$K_0$};
\draw [black,dotted](4,2) -- (7,2) node [right, black] {$\mathbb{R}\times 1$};
\filldraw (5,2)circle (.05) (6,2)circle (.05);
\draw[black,-](5,0)--(6.5,3) node [right, black] {$cH_{\bm u_2,a_2}$};
\draw[black,-](6,0)--(4.5,3) node [left, black] {$cH_{\bm u_1,a_1}$};
\draw (4.8,1.2) node[red] {$--+$};
\draw (5.5,2.2) node[red] {$+-+$};
\draw (6.2,1.2) node[red] {$+++$};
\draw (4.8,0.8) node[red] {$---$};
\draw (5.5,0) node[red] {$-+-$};
\draw (6.2,0.8) node[red] {$++-$};
\draw (5.5,-0.5) node {$c\mathcal{A}_{\bm a}$};
\draw [black,dotted](9,1) -- (12,1) node [right, black] {$K_0$};
\draw[black,-](10,0)--(11.5,3) node [right, black] {$cH_{\bm u_2,a_2}$};
\draw[black,-](11,0)--(9.5,3) node [left, black] {$cH_{\bm u_1,a_1}$};
\draw (10,1.2) node[red] {$--$};
\draw (10.5,2.2) node[red] {$+-$};
\draw (11,1.2) node[red] {$++$};
\draw (10.5,0) node[red] {$-+$};
\draw (10.5,-0.5) node {$\mathcal{A}^{\bm a}$};
\end{tikzpicture}
\end{figure}
}
\end{example}
Let us return to general case. When $\mathcal{A}_{\bm a}$ contains at least two distinct hyperplanes, by summarizing \eqref{Coning-Face-Number} and \eqref{Elementary-Lift-Face-Number},  the face numbers of parallel translations, conings and elementary lifts have the following relationships
\[
|\mathcal{F}(c\mathcal{A}_{\bm a})|=2|\mathcal{F}(\mathcal{A}_{\bm a})|+3\quad\And\quad |\mathcal{F}(\mathcal{A}^{\bm a})|=2|\mathcal{F}(\mathcal{A}_{\bm a})|-1.
\]

Furthermore, we denote by $\Sigma_p$, $\Sigma_c$ and $\Sigma_e$ the corresponding configuration spaces of parallel translation, coning, elementary lift of $\mathcal{A}_{\bm o}$ in turn, that is,
\[
\Sigma_p:=\{\mathcal{A}_{\bm a}:\bm a\in\mathbb{R}^m\},\quad\Sigma_c:=\{c\mathcal{A}_{\bm a}:\bm a\in\mathbb{R}^m\}\quad\And\quad \Sigma_e:=\{\mathcal{A}^{\bm a}:\bm a\in\mathbb{R}^m\}.
\]
We use notations  $\stackrel{c}\sim$, $\stackrel{n}\sim$ and $\stackrel{s}\sim$ to denote the combinatorial, normal and sign equivalence relations on real hyperplane arrangements, respectively. Integrating \autoref{Three-Type-Normal-Combinatorial}, \autoref{Three-Type-Sign} and \autoref{Geometric-Face}, we conclude that any one of the numbers of the three types of equivalence classes in $\Sigma_p$, $\Sigma_c$ and $\Sigma_e$ do not exceed the number of faces of $\delta\mathcal{A}_{\bm o}$.
\begin{corollary} With the above notations, we have
\begin{itemize}
\item [{\rm(a)}] $|\Sigma_p/\stackrel{c}\sim|\le |\Sigma_p/\stackrel{n}\sim|\le |\Sigma_p/\stackrel{s}\sim|=|\mathcal{F}(\mathcal{A}_{\bm o})|$.
\item [{\rm(b)}] $|\Sigma_c/\stackrel{c}\sim|\le |\Sigma_c/\stackrel{s}\sim|=|\mathcal{F}(\mathcal{A}_{\bm o})|$.
\item [{\rm(c)}] $|\Sigma_e/\stackrel{c}\sim|\le |\Sigma_e/\stackrel{s}\sim|=|\mathcal{F}(\mathcal{A}_{\bm o})|$.
\end{itemize}
In particular, when $\mathcal{A}_{\bm o}$ is not a multi-arrangement, we have
\begin{itemize}
\item [{\rm(d)}] $|\Sigma_p/\stackrel{n}\sim|=|\mathcal{F}(\mathcal{A}_{\bm o})|$.
\end{itemize}
\end{corollary}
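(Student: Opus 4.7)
The plan is to deduce this corollary as a direct bookkeeping consequence of the preceding theorems, keyed to the partition \eqref{Whole-Space-Partition} of $\mathbb{R}^m$ into open faces of $\delta\mathcal{A}_{\bm o}$. First I would establish the three equalities $|\Sigma_p/\stackrel{s}\sim|=|\Sigma_c/\stackrel{s}\sim|=|\Sigma_e/\stackrel{s}\sim|=|\mathcal{F}(\delta\mathcal{A}_{\bm o})|$. Each follows from \autoref{Three-Type-Sign}: the biconditionals (i)$\Leftrightarrow$(ii), (i)$\Leftrightarrow$(iii) and (i)$\Leftrightarrow$(iv) say that two parameter vectors $\bm a,\bm b\in\mathbb{R}^m$ produce sign-equivalent parallel translations (respectively, conings, elementary lifts) if and only if they belong to the same open face of $\delta\mathcal{A}_{\bm o}$. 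Combining this with \eqref{Whole-Space-Partition} produces a bijection between sign equivalence classes in each of $\Sigma_p,\Sigma_c,\Sigma_e$ and the open faces of $\delta\mathcal{A}_{\bm o}$.

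Next, the inequalities in (a)--(c) come from the elementary principle that if an equivalence relation $\sim_1$ refines another relation $\sim_2$ on a set $\Sigma$, then $|\Sigma/\sim_1|\ge|\Sigma/\sim_2|$. For (a), normal equivalence refines combinatorial equivalence by \autoref{Combinatorial-Equivalent}, giving $|\Sigma_p/\stackrel{c}\sim|\le|\Sigma_p/\stackrel{n}\sim|$. For the second inequality of (a), whenever $\mathcal{A}_{\bm a}\stackrel{s}\sim\mathcal{A}_{\bm b}$, the implication (ii)$\Rightarrow$(i) of \autoref{Three-Type-Sign} forces $\bm a,\bm b$ to lie in a common open face of $\delta\mathcal{A}_{\bm o}$, whence \autoref{Three-Type-Normal-Combinatorial}(i) yields $\mathcal{A}_{\bm a}\stackrel{n}\sim\mathcal{A}_{\bm b}$; hence sign equivalence refines normal equivalence on $\Sigma_p$, so $|\Sigma_p/\stackrel{n}\sim|\le|\Sigma_p/\stackrel{s}\sim|$. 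The inequalities in (b) and (c) follow directly from \autoref{Combinatorial-Equivalent0}, since sign equivalence implies combinatorial equivalence for any real hyperplane arrangements.

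Finally, part (d) is immediate from \autoref{Geometric-Face}: when $\mathcal{A}_{\bm o}$ is not a multi-arrangement, normal and sign equivalences coincide on $\Sigma_p$, so the chain in (a) collapses at the top to give $|\Sigma_p/\stackrel{n}\sim|=|\Sigma_p/\stackrel{s}\sim|=|\mathcal{F}(\delta\mathcal{A}_{\bm o})|$. There is no real obstacle in this argument; it is pure bookkeeping. The only subtlety worth flagging is that we genuinely need the $(\Leftarrow)$ direction of the biconditional in \autoref{Three-Type-Sign} to conclude that distinct open faces yield \emph{distinct} sign classes, ensuring the sign-class count equals $|\mathcal{F}(\delta\mathcal{A}_{\bm o})|$ rather than being merely bounded by it. All substantive geometric and combinatorial content has already been established in \autoref{Sec-4-2} and \autoref{Sec-4-3}.
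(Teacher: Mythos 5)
Your proof is correct and follows the same route the paper intends: the paper gives no separate proof, simply asserting that the corollary follows by integrating \autoref{Three-Type-Normal-Combinatorial}, \autoref{Three-Type-Sign} and \autoref{Geometric-Face}, and your write-up supplies exactly that bookkeeping. One thing worth noting: you (correctly) read the right-hand side as $|\mathcal{F}(\delta\mathcal{A}_{\bm o})|$, which is what the surrounding prose and \eqref{Whole-Space-Partition} demand; the printed statement's $|\mathcal{F}(\mathcal{A}_{\bm o})|$ appears to be a typo in the paper.
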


\section{Oriented matroids}\label{Sec-5}
In this section, we revisit the normal and sign equivalence relations on real hyperplane arrangements from the perspective of (affine) oriented matroids. Oriented matroids, introduced by Bland-Las Vergnas in 1978 \cite{Bland-Lasvergnas1978}, can be thought of as a combinatorial abstraction of real hyperplane arrangements, of polytopes and others. As we are concerned with real hyperplane arrangements, we only consider covectors, and do not discuss other perspectives on oriented matroids. A more general setting and background on oriented matroids can be found in the book \cite{Bjorner1999} by Bj\"{o}rner et al..

Let us start with some necessary terminology. A {\em sign vector} is a function $X:E\to\{+,0,-\}$, that is, an assignment of signs to every element of $E$. The power set $\{+,0,-\}^E$ is the collection of all possible sign vectors and $X_e$ stands for $X(e)$ for all $e\in E$. The {\em support} of $X$ is $\underline{X}:=\{e\in E\mid X_e\ne 0\}$ and its {\em zero set} $z(X):=E\setminus\underline{X}$. The {\em opposite} $-X$ of a sign vector $X$ is defined as $(-X)_e=-(X_e)$. The {\em composition} $X\circ Y$ of two sign vectors $X$ and $Y$ is defined to be
\begin{equation*}
X\circ Y:=
\begin{cases}
X_e,& \mbox{ if } X_e\ne0,\\
Y_e,& \mbox{ otherwise}.
\end{cases}
\end{equation*}
The {\em separation set} of  two sign vectors $X$ and $Y$ is
\[
S(X,Y):=\{e\in E\mid X_e\ne -Y_e\ne0\}.
\]
With these terminologies, we are now ready to define oriented matroids.
\begin{definition}[Covector Axioms]
{\rm
An {\em oriented matroid} $\mathcal{M}=(E,\mathcal{L})$ is an ordered pair of a finite set $E$ and a collection  $\mathcal{L}\subseteq\{\pm,0\}^E$ of sign vectors satisfying:
\begin{itemize}
  \item the zero vector $\bm 0\in\mathcal{L}$,
  \item  $X\in\mathcal{L}$ implies $-X\in\mathcal{L}$,
  \item  $X,Y\in\mathcal{L}$ implies $X\circ Y\in\mathcal{L}$,
  \item  if $X,Y\in\mathcal{L}$ and $e\in S(X,Y)$, then there exists $Z\in\mathcal{L}$ such that $Z_e=0$ and $Z_f=(X\circ Y)_f=(Y\circ X)_f$ for all $f\notin S(X,Y)$.
\end{itemize}
In this sense, the members of $\mathcal{L}$ are called {\em covectors} of $\mathcal{M}$.
}
\end{definition}

Next, we describe how the motivating model for the covector axiomatization of oriented matroids arises from central real arrangements. Let $\mathcal{A}=\{H_{\bm u_e}\mid e\in E\}$ be a central arrangement in $\mathbb{R}^n$. By a quick review of Subsection 2.3, for each point $\bm x\in\mathbb{R}^n$, the central arrangement $\mathcal{A}$ induces a unique sign vector $\Sign_{\mathcal{A}}(\bm x)\in\{\pm,0\}^E$ based on the position information. Clearly $\Sign_{\mathcal{A}}(\bm x)=\Sign_{\mathcal{A}}(\bm y)$ if and only if $\bm x$ and $\bm y$ come from the same open face $\mathcal{A}$. Thus the set consisting of the sign vectors $\Sign_{\mathcal{A}}(\bm x)$ for all points $\bm x\in\mathbb{R}^n$, has only finitely many members, which agrees with $\Sign(\mathcal{A})$ via \eqref{Arrangement-Sign}. Notably, $\Sign(\mathcal{A})$ exactly obeys the covector axiomatization of oriented matroids. Hence, the order pair $\big(E,\Sign(\mathcal{A})\big)$ automatically forms an oriented matroid, denoted by $\mathcal{M}(\mathcal{A})$.

In general, affine oriented matroids can be thought of an abstraction of affine real arrangements. Suppose $\mathcal{\mathcal{A}}=\{H_{\bm u_e,a_e}\mid e\in E\}$ is an affine arrangement in $\mathbb{R}^n$. Recall from \autoref{Sec-4-3} that $\mathcal{\mathcal{A}}$ leads to a linear arrangement
\[
c\mathcal{A}=\{cH_{\bm u_e,a_e}\mid e\in E\}\sqcup\{H_{\bm u_g}=K_0:x_{n+1}=0\}
\]
in $\mathbb{R}^{n+1}$ by coning.  Observe from the bijection $\epsilon$ in \eqref{Bijection}, \eqref{Sign-Coning} and \eqref{Coning-Face-Number} that
\[
\Sign(\mathcal{A})=\big\{\bm s_{E}\mid \bm s\in\Sign(c\mathcal{A})\And \bm s_g=+\big\}.
\]
Based on the important observation, affine oriented matroids can be naturally obtained from oriented matroids.  From \autoref{Affinie-Oriented-Matroid}, the order pair $\big(E,\Sign(\mathcal{A})\big)$ gives an affine oriented matroids on $E$, denoted by $\mathcal{M}(\mathcal{A})$ as well.
\begin{definition}[Affine Oriented Matroid]\label{Affinie-Oriented-Matroid}
{\rm
Let $\big(E\sqcup\{g\},\mathcal{L}\big)$ be an oriented matroid, $g$ be a distinguished element that is not a loop. The order pair $(E,\mathcal{W}\)$ is an {\em affine oriented matroid} on $E$, where
\[
\mathcal{W}:=\big\{X_E:=(X_e)_{e\in E}\mid X\in\mathcal{L}\And X_g=+\big\}.
\]
}
\end{definition}
In an unpublished manuscript 1992, Karlander \cite{Karlander1992} has given an axiomatization of affine oriented matroids, which provides an intrinsic characterization for such affine sign vector system $\mathcal{W}$, not depending on an ambient oriented matroid. Recently, Baum and Zhu \cite{Baum-Zhu2018} further reassessed Karlander's axiomatization of affine oriented matroids.

Two (affine) oriented matroids $\mathcal{M}$ and $\mathcal{M}'$ are {\em equivalent} if they have the same covectors, denoted by $\mathcal{M}\sim\mathcal{M}'$. Immediately, the connection between face posets of real hyperplane arrangements and covectors of (affine) oriented matroids reveals the following fact.
\begin{fact}\label{Arrangement-Oriented-Matroid}
{\rm
Let $\mathcal{A}=\{H_e\mid e\in E\}$ and $\mathcal{A}'=\{H'_e\mid e\in E\}$ be hyperplane arrangements in $\mathbb{R}^n$. Then $\mathcal{A}$ and $\mathcal{A}'$ are sign equivalent if and only if  the corresponding  (affine) oriented matroids $\mathcal{M}(\mathcal{A})$ and $\mathcal{M}(\mathcal{A}')$ are equivalent.
}
\end{fact}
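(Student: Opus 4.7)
The plan is to unwind both sides until each becomes an equality of sign-vector sets in $\{\pm,0\}^E$, and then observe that the resulting identification is tautological. By \autoref{Equivalence-Arrangement}, sign equivalence of $\mathcal{A}$ and $\mathcal{A}'$ is \emph{by definition} the equality $\Sign(\mathcal{A}) = \Sign(\mathcal{A}')$. Equivalence of oriented matroids $\mathcal{M}(\mathcal{A}) \sim \mathcal{M}(\mathcal{A}')$ is likewise, by definition, the equality of their covector sets. Thus the content of the fact reduces to showing that the covector set attached to $\mathcal{M}(\mathcal{A})$ always coincides with $\Sign(\mathcal{A})$, for both the linear and the affine case.

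In the linear (central) case this is built into the construction preceding \autoref{Affinie-Oriented-Matroid}: one sets $\mathcal{M}(\mathcal{A}) := \big(E, \Sign(\mathcal{A})\big)$ directly, so the required identification is trivial and the conclusion $\mathcal{M}(\mathcal{A}) \sim \mathcal{M}(\mathcal{A}') \Longleftrightarrow \Sign(\mathcal{A}) = \Sign(\mathcal{A}')$ is immediate.

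In the affine case one passes through the coning $c\mathcal{A}$, and \autoref{Affinie-Oriented-Matroid} produces $\mathcal{M}(\mathcal{A}) = (E, \mathcal{W})$ with
\[
\mathcal{W} = \big\{X_E : X \in \Sign(c\mathcal{A}),\; X_g = +\big\}.
\]
The decisive step is to identify $\mathcal{W}$ with $\Sign(\mathcal{A})$. I would do this by combining the bijection $\epsilon$ of \eqref{Bijection}, which matches $\Sign(\mathcal{A})$ with $\Sign^+(c\mathcal{A})$, with the coordinate-by-coordinate description in \eqref{Sign-Coning}: for any face $F$ of $\mathcal{A}$, the sign vector $\Sign_{c\mathcal{A}}(\kappa(F))$ has last entry $+$ and agrees with $\Sign_{\mathcal{A}}(F)$ on $E$. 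Restricting to the $E$-coordinates therefore recovers exactly $\Sign(\mathcal{A})$, so $\mathcal{W} = \Sign(\mathcal{A})$, and the argument closes as in the linear case.

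There is essentially no mathematical obstacle here; the statement is a direct dictionary between the arrangement language and the (affine) oriented-matroid language, which is presumably why the authors label it a fact rather than a theorem. The only bookkeeping worth flagging is that $\mathcal{A}$ and $\mathcal{A}'$ must share a common labelling of the ground set $E$ (which is built into the hypothesis), and that in the affine case the distinguished coning element $g$ must be adjoined and then discarded uniformly on both sides of the equivalence.
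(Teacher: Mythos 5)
Your proposal is correct and follows the same route the paper intends: the statement is essentially definitional, since $\mathcal{M}(\mathcal{A})$ is constructed in both the central and affine cases with covector set equal to $\Sign(\mathcal{A})$, so equivalence of the (affine) oriented matroids and equality of sign sets are the same condition. Your verification that $\mathcal{W} = \Sign(\mathcal{A})$ via \eqref{Bijection} and \eqref{Sign-Coning} is exactly the observation the paper records just before \autoref{Affinie-Oriented-Matroid}, which is why the paper labels the result a Fact and gives no further proof.
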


For convenience, suppose $E=[m]$ and $g=m+1$ later. With \autoref{Geometric-Sign} and \autoref{Arrangement-Oriented-Matroid}, we have the following result.
\begin{corollary}\label{Geometric-Equivalent}
Let $\bm a,\bm b\in\mathbb{R}^m$. Then $\mathcal{A}_{\bm a}$ and $\mathcal{A}_{\bm b}$ are normally equivalent if and only if $\mathcal{M}(\mathcal{A}_{\bm a})$ and $\mathcal{M}(\mathcal{A}_{\bm b})$ are equivalent up to relabeling.
\end{corollary}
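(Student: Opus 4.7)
The plan is to deduce this corollary directly from the geometric characterization in \autoref{Geometric-Sign} and the oriented-matroid dictionary in \autoref{Arrangement-Oriented-Matroid}. Since $\mathcal{A}_{\bm a}$ and $\mathcal{A}_{\bm b}$ are both parallel translations of the common linear arrangement $\mathcal{A}_{\bm o}$, the actual content of the statement is merely to translate ``the same sign up to reorientation and relabeling'' that appears in \autoref{Geometric-Sign} into the oriented-matroid notion of ``equivalent up to relabeling.''

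For the forward implication, I would begin by applying \autoref{Geometric-Sign}: from normal equivalence of $\mathcal{A}_{\bm a}$ and $\mathcal{A}_{\bm b}$ I extract a subset $A\subseteq[m]$ and a permutation $\pi\in\mathfrak{S}_m$ so that ${}_{-A}\pi(\mathcal{A}_{\bm b})$ is again a parallel translation of $\mathcal{A}_{\bm o}$ and $\Sign(\mathcal{A}_{\bm a})=\Sign({}_{-A}\pi(\mathcal{A}_{\bm b}))$. Invoking \autoref{Arrangement-Oriented-Matroid} then yields $\mathcal{M}(\mathcal{A}_{\bm a})\sim\mathcal{M}({}_{-A}\pi(\mathcal{A}_{\bm b}))$. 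Because the reorientation on $A$ simply flips the $i$-th coordinate of every covector for $i\in A$, and $\pi$ permutes coordinates, the composite operation is precisely a relabeling of the labeled (affine) oriented matroid $\mathcal{M}(\mathcal{A}_{\bm b})$ in the sense understood here, so $\mathcal{M}(\mathcal{A}_{\bm a})$ and $\mathcal{M}(\mathcal{A}_{\bm b})$ are equivalent up to relabeling. For the converse, I would reverse these steps: a relabeling $(A,\pi)$ witnessing the equivalence of the two oriented matroids produces, via \autoref{Arrangement-Oriented-Matroid}, the identity $\Sign(\mathcal{A}_{\bm a})=\Sign({}_{-A}\pi(\mathcal{A}_{\bm b}))$, and the sufficiency direction of \autoref{Geometric-Sign} then delivers the normal equivalence of $\mathcal{A}_{\bm a}$ and $\mathcal{A}_{\bm b}$.

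The only delicate point is the identification between arrangement-level reorientation/relabeling and oriented-matroid-level relabeling, so the main obstacle is pinning down that flipping signs of covectors on $A$ matches reorientation of the affine arrangement on $A$, and that permuting covector coordinates by $\pi$ matches relabeling the hyperplanes by $\pi$. Once this dictionary is made explicit on the level of sign vectors, the corollary follows as a routine combination of \autoref{Geometric-Sign} and \autoref{Arrangement-Oriented-Matroid}.
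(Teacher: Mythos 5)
Your proof recreates exactly the paper's (unstated) argument: \autoref{Geometric-Equivalent} is presented as an immediate consequence of \autoref{Geometric-Sign} together with \autoref{Arrangement-Oriented-Matroid}, and you identify those two ingredients correctly and fill in the short chain of implications in both directions.

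The one step that deserves scrutiny is your claim that ``the reorientation on $A$ simply flips the $i$-th coordinate of every covector for $i\in A$, and $\pi$ permutes coordinates, [so] the composite operation is precisely a relabeling of the labeled (affine) oriented matroid.'' In standard oriented-matroid usage, a relabeling (isomorphism) is only a bijection of the ground set, i.e.\ a coordinate permutation; flipping signs on a subset $A$ is a reorientation, which is a genuinely different operation. Indeed, take $m=2$ with $\bm u_1=(1,0)$ and $\bm u_2=(-1,0)$, $\bm a=(0,-1)$, $\bm b=(1,0)$: then $\mathcal{A}_{\bm a}$ and $\mathcal{A}_{\bm b}$ are both the geometric arrangement $\{x=0,\,x=1\}$ and are normally equivalent via the identity map, but $\Sign(\mathcal{A}_{\bm a})=\{(-,+),(0,+),(+,+),(+,0),(+,-)\}$ while $\Sign(\mathcal{A}_{\bm b})=\{(-,+),(-,0),(-,-),(0,-),(+,-)\}$, and no permutation of the two coordinates carries one set to the other — one needs the reorientation that negates both coordinates. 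So what \autoref{Geometric-Sign} and \autoref{Arrangement-Oriented-Matroid} actually deliver is ``equivalent up to reorientation and relabeling.'' The discrepancy is already present in the corollary's statement, so you are faithfully transmitting the paper's phrasing; but the assertion that sign-flipping ``is precisely a relabeling'' is not correct under the usual conventions, and the statement would be cleaner if it read ``equivalent up to reorientation and relabeling.''
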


Furthermore, the following is a direct consequence of \autoref{Geometric-Face} and \autoref{Arrangement-Oriented-Matroid}.
\begin{corollary}
Suppose $\mathcal{A}_{\bm o}$ is not a multi-arrangement. Let $\bm a,\bm b\in\mathbb{R}^m$. Then $\mathcal{A}_{\bm a}$ and $\mathcal{A}_{\bm b}$ are normally equivalent if and only if $\mathcal{M}(\mathcal{A}_{\bm a})$ and $\mathcal{M}(\mathcal{A}_{\bm b})$ are equivalent.
\end{corollary}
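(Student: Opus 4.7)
The plan is to derive this corollary by directly chaining together two results already established in the excerpt, so the proof should be short and mechanical rather than requiring new ideas. Specifically, the target biconditional factors through sign equivalence of the parallel translations $\mathcal{A}_{\bm a}$ and $\mathcal{A}_{\bm b}$ as an intermediate condition.

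First I would invoke \autoref{Geometric-Face}, which under the hypothesis that $\mathcal{A}_{\bm o}$ is not a multi-arrangement provides the equivalence
\[
\mathcal{A}_{\bm a}\;\text{and}\;\mathcal{A}_{\bm b}\;\text{are normally equivalent}\;\Longleftrightarrow\;\Sign(\mathcal{A}_{\bm a})=\Sign(\mathcal{A}_{\bm b}).
\]
This is precisely where the ``not a multi-arrangement'' assumption is used; without it, normal equivalence is strictly stronger than sign equivalence, and the corollary would fail. Next I would apply \autoref{Arrangement-Oriented-Matroid}, noting that $\mathcal{A}_{\bm a}$ and $\mathcal{A}_{\bm b}$ share the same indexing set $E=[m]$ and that $\mathcal{M}(\mathcal{A}_{\bm a})$ and $\mathcal{M}(\mathcal{A}_{\bm b})$ are defined as the (affine) oriented matroids on $E$ whose covector sets are $\Sign(\mathcal{A}_{\bm a})$ and $\Sign(\mathcal{A}_{\bm b})$ respectively. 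Hence sign equivalence of $\mathcal{A}_{\bm a}$ and $\mathcal{A}_{\bm b}$ is exactly the statement $\mathcal{M}(\mathcal{A}_{\bm a})\sim\mathcal{M}(\mathcal{A}_{\bm b})$.

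Composing the two equivalences yields the desired biconditional, completing the proof. I do not anticipate any obstacle: unlike \autoref{Geometric-Equivalent}, no relabeling is needed here, because both arrangements are parallel translations of the same underlying linear arrangement $\mathcal{A}_{\bm o}$, so the hyperplane labels (and their fixed normal orientations) already match. The only subtlety worth remarking on in the proof is that the no-multi-arrangement hypothesis is essential for the forward implication to be an equivalence rather than a one-way implication, and this is inherited directly from \autoref{Geometric-Face}.
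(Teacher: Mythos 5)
Your proposal is correct and follows exactly the paper's route: the corollary is stated there as a direct consequence of \autoref{Geometric-Face} together with \autoref{Arrangement-Oriented-Matroid}, with sign equivalence as the intermediate condition. Your remark about where the no-multi-arrangement hypothesis enters is also accurate and matches the role it plays in \autoref{Geometric-Face}.
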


Applying \autoref{Arrangement-Oriented-Matroid} to \autoref{Three-Type-Sign}, we immediately obtain the next corollary.
\begin{corollary}\label{Same-Three-Orientedmatroids}
Let $\bm a,\bm b\in\mathbb{R}^m$. Then the following are equivalent:
\begin{itemize}
  \item [\i] $\bm a$ and $\bm b$ belong to the same open face of $\delta\mathcal{A}_{\bm o}$;
  \item [\ii] $\mathcal{M}(\mathcal{A}_{\bm a})$ and $\mathcal{M}(\mathcal{A}_{\bm b})$ are equivalent;
  \item [\iii]$\mathcal{M}(c\mathcal{A}_{\bm a})$ and $\mathcal{M}(c\mathcal{A}_{\bm b})$ are equivalent;
  \item [\iv] $\mathcal{M}(\mathcal{A}^{\bm a})$ and $\mathcal{M}(\mathcal{A}^{\bm b})$ are equivalent.
\end{itemize}
\end{corollary}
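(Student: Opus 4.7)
The plan is straightforward: the statement is essentially a translation of \autoref{Three-Type-Sign} from the language of sign equivalence of real hyperplane arrangements into the language of equivalence of (affine) oriented matroids, using the dictionary provided by \autoref{Arrangement-Oriented-Matroid}. So I would simply apply \autoref{Arrangement-Oriented-Matroid} coordinate by coordinate to the four conditions in \autoref{Three-Type-Sign}.

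More concretely, first I would note that for any $\bm a \in \mathbb{R}^m$, the parallel translation $\mathcal{A}_{\bm a}$, the coning $c\mathcal{A}_{\bm a}$, and the elementary lift $\mathcal{A}^{\bm a}$ are each real hyperplane arrangements on a fixed index set (namely $[m]$ for $\mathcal{A}_{\bm a}$ and $\mathcal{A}^{\bm a}$, and $[m+1]$ for $c\mathcal{A}_{\bm a}$ with the distinguished element $g = m+1$ labeling $K_0$). By \autoref{Arrangement-Oriented-Matroid}, two arrangements on the same labeled ground set are sign equivalent if and only if their associated (affine) oriented matroids are equivalent. Applying this to the pairs $(\mathcal{A}_{\bm a}, \mathcal{A}_{\bm b})$, $(c\mathcal{A}_{\bm a}, c\mathcal{A}_{\bm b})$, and $(\mathcal{A}^{\bm a}, \mathcal{A}^{\bm b})$ yields, respectively, the equivalences
\[
\mathcal{A}_{\bm a} \stackrel{s}{\sim} \mathcal{A}_{\bm b} \Longleftrightarrow \mathcal{M}(\mathcal{A}_{\bm a}) \sim \mathcal{M}(\mathcal{A}_{\bm b}),
\]
\[
c\mathcal{A}_{\bm a} \stackrel{s}{\sim} c\mathcal{A}_{\bm b} \Longleftrightarrow \mathcal{M}(c\mathcal{A}_{\bm a}) \sim \mathcal{M}(c\mathcal{A}_{\bm b}),
\]
\[
\mathcal{A}^{\bm a} \stackrel{s}{\sim} \mathcal{A}^{\bm b} \Longleftrightarrow \mathcal{M}(\mathcal{A}^{\bm a}) \sim \mathcal{M}(\mathcal{A}^{\bm b}).
\]

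Then I would chain these with \autoref{Three-Type-Sign}, which already tells us that condition (i) of the present statement is equivalent to each of the three sign-equivalence statements on the right-hand sides above. Stringing the equivalences together gives (i)~$\Leftrightarrow$~(ii), (i)~$\Leftrightarrow$~(iii), and (i)~$\Leftrightarrow$~(iv), and hence all four conditions are mutually equivalent. There is essentially no obstacle here: all the real work was done in establishing \autoref{Three-Type-Sign} (which characterizes sign equivalence of the three deformations via the derived arrangement $\delta\mathcal{A}_{\bm o}$), and the passage between sign equivalence and equivalence of (affine) oriented matroids is tautological once one has set up the covector model carefully, as done in the discussion preceding \autoref{Arrangement-Oriented-Matroid}. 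The only mild point to check is that the affine case (for $\mathcal{A}_{\bm a}$) fits the framework of \autoref{Affinie-Oriented-Matroid}, but this is precisely ensured by taking $g = m+1$ corresponding to $K_0$ in $c\mathcal{A}_{\bm a}$, so $\mathcal{M}(\mathcal{A}_{\bm a})$ is well-defined as an affine oriented matroid on $[m]$, and \autoref{Arrangement-Oriented-Matroid} applies uniformly in all three cases.
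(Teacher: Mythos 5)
Your proof is correct and is essentially the same as the paper's, which disposes of the corollary in one line by "applying \autoref{Arrangement-Oriented-Matroid} to \autoref{Three-Type-Sign}." You simply make explicit the chaining of equivalences and the check that $\mathcal{A}_{\bm a}$ fits the affine oriented matroid framework, which the paper leaves implicit.
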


Let
\[
\Omega_p:=\big\{\mathcal{M}(\mathcal{A}_{\bm a}):\bm a\in\mathbb{R}^m\big\},\quad \Omega_c:=\big\{\mathcal{M}(c\mathcal{A}_{\bm a}):\bm a\in\mathbb{R}^m\big\},\quad \Omega_e:=\big\{\mathcal{M}(\mathcal{A}^{\bm a}):\bm a\in\mathbb{R}^m\big\}.
\]
The next result that the numbers of different (affine) oriented matroids in $\Omega_p$, $\Omega_c$ and $\Omega_e$ agree with the number of faces of $\delta\mathcal{A}_{\bm o}$, is a direct consequence of  \autoref{Same-Three-Orientedmatroids}.
\begin{corollary}
With the above notations, we have
  \[
  |\Omega_p/\sim|= |\Omega_c/\sim|= |\Omega_e/\sim|=|\mathcal{F}(\delta\mathcal{A}_{\bm o})|.
  \]
\end{corollary}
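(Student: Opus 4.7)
The proof should be a short, direct consequence of \autoref{Same-Three-Orientedmatroids} together with the partition \eqref{Whole-Space-Partition} of $\mathbb{R}^m$ into the relative interiors of the faces of the derived arrangement $\delta\mathcal{A}_{\bm o}$. The overall plan is to exhibit, for each of the three families, a bijection between $\mathcal{F}(\delta\mathcal{A}_{\bm o})$ and the corresponding quotient $\Omega_\ast/\!\sim$, so that all three cardinalities are simultaneously equal to $|\mathcal{F}(\delta\mathcal{A}_{\bm o})|$.

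First, for $\Omega_p$, I will define a map
\[
\Phi_p:\mathcal{F}(\delta\mathcal{A}_{\bm o})\longrightarrow \Omega_p/\!\sim,\qquad F\longmapsto [\mathcal{M}(\mathcal{A}_{\bm a})],
\]
where $\bm a$ is any chosen element of $\relint(F)$. To see that $\Phi_p$ is well-defined, take any other $\bm b\in\relint(F)$; then $\bm a$ and $\bm b$ lie in the same open face of $\delta\mathcal{A}_{\bm o}$, so by the implication (i)$\Rightarrow$(ii) of \autoref{Same-Three-Orientedmatroids} we have $\mathcal{M}(\mathcal{A}_{\bm a})\sim\mathcal{M}(\mathcal{A}_{\bm b})$. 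For surjectivity, given any class $[\mathcal{M}(\mathcal{A}_{\bm c})]\in\Omega_p/\!\sim$, the decomposition \eqref{Whole-Space-Partition} guarantees a unique face $F_{\bm c}$ with $\bm c\in\relint(F_{\bm c})$, and $\Phi_p(F_{\bm c})=[\mathcal{M}(\mathcal{A}_{\bm c})]$. For injectivity, suppose $F\neq F'$ and pick $\bm a\in\relint(F)$, $\bm b\in\relint(F')$; since the relative interiors are disjoint, $\bm a$ and $\bm b$ lie in different open faces of $\delta\mathcal{A}_{\bm o}$, and the contrapositive of (i)$\Leftrightarrow$(ii) in \autoref{Same-Three-Orientedmatroids} gives $\mathcal{M}(\mathcal{A}_{\bm a})\not\sim\mathcal{M}(\mathcal{A}_{\bm b})$, i.e.\ $\Phi_p(F)\neq\Phi_p(F')$.

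The arguments for $\Omega_c$ and $\Omega_e$ are identical, with analogous maps $\Phi_c$ and $\Phi_e$, simply invoking (i)$\Leftrightarrow$(iii) and (i)$\Leftrightarrow$(iv) of \autoref{Same-Three-Orientedmatroids} in place of (i)$\Leftrightarrow$(ii). Combining the three bijections yields
\[
|\Omega_p/\!\sim|=|\Omega_c/\!\sim|=|\Omega_e/\!\sim|=|\mathcal{F}(\delta\mathcal{A}_{\bm o})|.
\]
There is no real obstacle here: all substantive work has been done in \autoref{Three-Type-Sign} and its oriented-matroid reformulation \autoref{Same-Three-Orientedmatroids}, and the present corollary is essentially a bookkeeping consequence of the face-partition of the ambient parameter space $\mathbb{R}^m$.
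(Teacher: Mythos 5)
Your proof is correct and matches the approach the paper intends: the paper states the corollary as a "direct consequence" of \autoref{Same-Three-Orientedmatroids}, and you simply spell out the bijections $\Phi_p,\Phi_c,\Phi_e$ that make this direct consequence explicit, using the biconditional in \autoref{Same-Three-Orientedmatroids} for well-definedness and injectivity and the partition \eqref{Whole-Space-Partition} for surjectivity. Nothing is missing; this is the right level of detail for an argument the paper treats as immediate.
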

\section{Derived arrangement}\label{Sec-6}
The derived arrangement is closely related to combinatorics, algebra, topology, and other fields. For more detailed information, please refer to Athanasiadis \cite{Athanasiadis1999}, Bayer-Brandt \cite{Bayer-Brandt1997}, Cohen-Falk-Randell \cite{Cohen-Falk-Randell2020}, Falk \cite{Falk1994}, Libgober-Settepanella \cite{Libgober-Settepanella2018}, Sawada-Settepanella-Yamagata \cite{Sawada2017}, Yamagata \cite{Yamagata2023}, Ziegler \cite{Ziegler1993}, and others. In this section, we provide several new characterizations for real derived arrangements associated with the faces and sign vectors.

Let us first define two operators related to real hyperplane arrangements: face operator and sign operator. Associated with the linear arrangement $\mathcal{A}_{\bm o}$, we define a {\em sign operator} on each subset $S\subseteq\mathbb{R}^m$ as
\begin{equation}\label{Sign-Operator}
\sign_{\mathcal{A}_{\bm o}}(S):=\bigcup_{\bm a\in S}\Sign(\mathcal{A}_{\bm a}).
\end{equation}
Conversely, for any vector $\bm s\in\{\pm,0\}^m$, it yields a subset
\begin{equation}\label{Face-Operator}
\face_{\mathcal{A}_{\bm o}}(\bm s):=\big\{\bm a\in\mathbb{R}^m\mid \bm s\in\Sign(\mathcal{A}_{\bm a})\big\}
\end{equation}
in  $\mathbb{R}^n$. We further define a {\em face operator} on each subset $S\subseteq\{\pm,0\}^m$ to be
\begin{equation*}\label{Region-Operator}
\face_{\mathcal{A}_{\bm o}}(S):=\bigcap_{\bm s\in S}\face_{\mathcal{A}_{\bm o}}(\bm s).
\end{equation*}
First note the following fact from \autoref{Separate-Cells}.
\begin{fact}\label{Fact}
{\rm
Let $F_1,F_2$ be distinct faces of $\delta\mathcal{A}_{\bm o}$, and $\bm a\in\relint(F_1),\bm b\in\relint(F_2)$. Then
\[
\Sign(\mathcal{A}_{\bm a})\nsubseteq\Sign(\mathcal{A}_{\bm b})\quad\And\quad\Sign(\mathcal{A}_{\bm b})\nsubseteq\Sign(\mathcal{A}_{\bm a}).
\]
}
\end{fact}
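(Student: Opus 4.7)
The plan is to exhibit, for each direction of the claim, an explicit sign vector witnessing the non-containment, by combining \autoref{Separate-Cells} with the elementary solvability criterion for when a subfamily of affine hyperplanes has a common point.

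First, since $F_1\ne F_2$ are faces of $\delta\mathcal{A}_{\bm o}$, their sign vectors with respect to $\delta\mathcal{A}_{\bm o}$ must differ; hence there is a circuit $C\in\mathcal{C}(\mathcal{A}_{\bm o})$ for which $\bm a$ and $\bm b$ lie in distinct position classes among $\mathring{H}_{\bm c^C}^-$, $H_{\bm c^C}$, $\mathring{H}_{\bm c^C}^+$. I split into two cases according to this position data. In the symmetric case where both points are strictly on opposite sides of $H_{\bm c^C}$ — say $\bm a\in\mathring{H}_{\bm c^C}^-$ and $\bm b\in\mathring{H}_{\bm c^C}^+$ — two applications of \autoref{Separate-Cells} give the fact quickly: using $\bm c^C$ and noting $\mathring{H}_{\bm c^C}^+\subseteq H_{\bm c^C}^+$ yields a sign vector in $S_{\bm c^C}\cap\Sign(\mathcal{A}_{\bm a})$ that is absent from $\Sign(\mathcal{A}_{\bm b})$; reversing the circuit vector to $-\bm c^C$ swaps the roles and produces the opposite non-containment via $S_{-\bm c^C}$.

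The harder case is when exactly one of $\bm a,\bm b$ lies on $H_{\bm c^C}$. Without loss of generality assume $\bm a\in H_{\bm c^C}$ and $\bm b\in\mathring{H}_{\bm c^C}^+$. For $\Sign(\mathcal{A}_{\bm b})\nsubseteq\Sign(\mathcal{A}_{\bm a})$, \autoref{Separate-Cells} still applies to $-\bm c^C$: $\bm b$ is strict and $\bm a\in H_{-\bm c^C}\subseteq H_{-\bm c^C}^+$, so $S_{-\bm c^C}\cap\Sign(\mathcal{A}_{\bm b})\ne\emptyset$ while $S_{-\bm c^C}\cap\Sign(\mathcal{A}_{\bm a})=\emptyset$. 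For the reverse non-containment, however, the lemma does not apply in either orientation because $\bm a$ is not strict. I would instead invoke that $\bm c^C_C$ spans the left null space of $U_C$, so the system $U_C\bm x=\bm a_C$ is solvable exactly when $\bm a\in H_{\bm c^C}$; consequently $\bigcap_{i\in C}H_{\bm u_i,a_i}\ne\emptyset$, and any point in this intersection realizes a sign vector in $\Sign(\mathcal{A}_{\bm a})$ with zero on every coordinate of $C$. Meanwhile $\bm b\notin H_{\bm c^C}$ forces $\bigcap_{i\in C}H_{\bm u_i,b_i}=\emptyset$, so no element of $\Sign(\mathcal{A}_{\bm b})$ is zero on all of $C$, giving $\Sign(\mathcal{A}_{\bm a})\nsubseteq\Sign(\mathcal{A}_{\bm b})$.

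The main obstacle is precisely this last step. Because \autoref{Separate-Cells} is asymmetric — one of its arguments must lie strictly off the hyperplane — a purely mechanical two-fold application cannot cover the one-on-hyperplane case, and the complementary solvability observation (elementary linear algebra reading off the defining property of $\bm c^C$) is the necessary supplement. After this observation, both non-containments follow cleanly from the position of $\bm a$ and $\bm b$ relative to a single separating hyperplane of $\delta\mathcal{A}_{\bm o}$.
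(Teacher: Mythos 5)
Your proof is correct, and it completes a step that the paper merely asserts. The paper says the Fact follows ``from \autoref{Separate-Cells},'' but as you observe, that lemma applied to $\bm c^C$ and $-\bm c^C$ only handles the case where $\bm a$ and $\bm b$ lie strictly on opposite sides of some hyperplane $H_{\bm c^C}\in\delta\mathcal{A}_{\bm o}$; when one of the two points lies on $H_{\bm c^C}$ --- which is unavoidable, e.g.\ when $F_1$ is a proper face of $F_2$, so that $\bm a$ lies on every hyperplane of $\delta\mathcal{A}_{\bm o}$ on which it differs from $\bm b$ --- the lemma delivers only one of the two required non-containments, since its conclusion $\Sign(\mathcal{A}_{\bm x})\cap S_{\bm c^C}\ne\emptyset$ needs $\bm x\in\mathring{H}_{\bm c^C}^-$. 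Your supplementary step, that $U_C\bm x=\bm a_C$ is consistent iff $\langle\bm c^C,\bm a\rangle=0$ because $\bm c^C_C$ spans the one-dimensional left kernel of $U_C$, so the all-zero-on-$C$ pattern occurs in $\Sign(\mathcal{A}_{\bm a})$ exactly when $\bm a\in H_{\bm c^C}$ and never otherwise, is the right fix and sits in the same Fredholm-type circle of ideas as the proof of \autoref{Separate-Cells} itself. In effect you have recast \autoref{Separate-Cells} as a trichotomy: for each circuit $C$ exactly one of the sign patterns $S_{\bm c^C}$, $S_{-\bm c^C}$, or zero-on-$C$ is realized in $\Sign(\mathcal{A}_{\bm a})$, according to the sign of $\langle\bm c^C,\bm a\rangle$, and that trichotomy is what the Fact genuinely requires. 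So this is the same route the paper gestures at, made correct and explicit; the extra observation you flag as the ``main obstacle'' is indeed necessary and not a detour.
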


To obtain our results, the next lemma is necessary.
\begin{lemma}\label{Face-Characterization-1}
Let $F\in\mathcal{F}(\delta\mathcal{A}_{\bm o})$. Then
\[
\relint(F)=\face_{\mathcal{A}_{\bm o}}\big(\sign_{\mathcal{A}_{\bm o}}(\relint(F))\big).
\]
\end{lemma}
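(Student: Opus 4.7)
The plan is to collapse the left-hand occurrence of $\sign_{\mathcal{A}_{\bm o}}$ using \autoref{Three-Type-Sign}, and then to separate the two inclusions and settle the nontrivial one via \autoref{Fact}. The equivalence $(\mathrm{i}) \Leftrightarrow (\mathrm{ii})$ of \autoref{Three-Type-Sign} tells us that the sign set $\Sign(\mathcal{A}_{\bm a})$ is constant as $\bm a$ ranges over $\relint(F)$, so fixing any representative $\bm a_0 \in \relint(F)$ yields
$$\sign_{\mathcal{A}_{\bm o}}(\relint(F)) \;=\; \Sign(\mathcal{A}_{\bm a_0}).$$
The problem therefore reduces to proving the equality $\relint(F) = \face_{\mathcal{A}_{\bm o}}(\Sign(\mathcal{A}_{\bm a_0}))$.

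The forward inclusion is essentially a rewriting of definitions: for any $\bm a' \in \relint(F)$, \autoref{Three-Type-Sign} gives $\Sign(\mathcal{A}_{\bm a'}) = \Sign(\mathcal{A}_{\bm a_0})$, so each $\bm s \in \Sign(\mathcal{A}_{\bm a_0})$ is a sign vector of some face of $\mathcal{A}_{\bm a'}$, which by \eqref{Face-Operator} means $\bm a' \in \face_{\mathcal{A}_{\bm o}}(\bm s)$. Intersecting over all such $\bm s$ places $\bm a'$ in the right-hand side.

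For the reverse inclusion, take $\bm b \in \face_{\mathcal{A}_{\bm o}}(\Sign(\mathcal{A}_{\bm a_0}))$. Unwinding the definitions of the face operator and \eqref{Face-Operator} forces $\Sign(\mathcal{A}_{\bm a_0}) \subseteq \Sign(\mathcal{A}_{\bm b})$. By the partition \eqref{Whole-Space-Partition}, there exists a unique face $F'$ of $\delta\mathcal{A}_{\bm o}$ with $\bm b \in \relint(F')$. If $F' \ne F$, then \autoref{Fact} immediately rules out the inclusion $\Sign(\mathcal{A}_{\bm a_0}) \subseteq \Sign(\mathcal{A}_{\bm b})$ just obtained, a contradiction. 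Therefore $F' = F$ and $\bm b \in \relint(F)$, completing the argument.

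There is no serious technical obstacle: both key ingredients are already packaged as \autoref{Three-Type-Sign} (constancy of signs on an open face, which handles the ``$\subseteq$'' direction) and \autoref{Fact} (incomparability of signs across distinct open faces, which handles the ``$\supseteq$'' direction). The proof is thus a compact two-line combination of these two facts with the unwinding of the $\sign_{\mathcal{A}_{\bm o}}$ and $\face_{\mathcal{A}_{\bm o}}$ operators.
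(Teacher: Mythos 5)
Your proposal is correct and follows essentially the same route as the paper: both reduce $\sign_{\mathcal{A}_{\bm o}}(\relint(F))$ to a single $\Sign(\mathcal{A}_{\bm a_0})$ via \autoref{Three-Type-Sign}, handle the forward inclusion by unwinding \eqref{Face-Operator}, and close the reverse inclusion by noting that $\bm b \in \face_{\mathcal{A}_{\bm o}}(\Sign(\mathcal{A}_{\bm a_0}))$ forces $\Sign(\mathcal{A}_{\bm a_0}) \subseteq \Sign(\mathcal{A}_{\bm b})$, which \autoref{Fact} rules out unless $\bm b$ lies in $\relint(F)$. The only cosmetic difference is that the paper explicitly upgrades the inclusion to an equality $\Sign(\mathcal{A}_{\bm a_0}) = \Sign(\mathcal{A}_{\bm b})$ and re-invokes \autoref{Three-Type-Sign}, whereas you pass directly to the contradiction; the two are logically identical.
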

\begin{proof}
For any $\bm s\in\sign_{\mathcal{A}_{\bm o}}\big(\relint(F)\big)$, clearly $\relint(F)\subseteq \face_{\mathcal{A}_{\bm o}}(\bm s)$, which means
\[
\relint(F)\subseteq\bigcap_{\bm s\in\sign_{\mathcal{A}_{\bm o}}\big(\relint(F)\big)}\face_{\mathcal{A}_{\bm o}}(\bm s)=\face_{\mathcal{A}_{\bm o}}\big(\sign_{\mathcal{A}_{\bm o}}(\relint(F))\big).
\]
We now prove the opposite side. According to (i) and (ii) in \autoref{Three-Type-Sign}, we deduce
\[
\sign_{\mathcal{A}_{\bm o}}\big(\relint(F)\big)=\Sign(\mathcal{A}_{\bm x}),\quad\forall\, \bm x\in\relint(F).
\]
Following this,  for a fixed $\bm a\in\relint(F)$, we have
\[
\face_{\mathcal{A}_{\bm o}}\big(\sign_{\mathcal{A}_{\bm o}}(\relint(F))\big)=\face_{\mathcal{A}_{\bm o}}\big(\Sign(\mathcal{A}_{\bm a})\big).
\]
This further implies $\Sign(\mathcal{A}_{\bm a})\subseteq\Sign(\mathcal{A}_{\bm x})$ for any $\bm x\in\face_{\mathcal{A}_{\bm o}}\big(\sign_{\mathcal{A}_{\bm o}}(\relint(F))\big)$. Together with \autoref{Fact}, we conclude
\[
\Sign(\mathcal{A}_{\bm a})=\Sign(\mathcal{A}_{\bm x}),\quad\forall\,\bm x\in\face_{\mathcal{A}_{\bm o}}\big(\sign_{\mathcal{A}_{\bm o}}(\relint(F))\big).
\]
Once again, we derive from (i) and (ii) in \autoref{Three-Type-Sign}  that each $\bm x\in\face_{\mathcal{A}_{\bm o}}\big(\sign_{\mathcal{A}_{\bm o}}(\relint(F))\big)$ belongs to $\relint(F)$. Consequently, we have $\relint(F)=\face_{\mathcal{A}_{\bm o}}\big(\sign_{\mathcal{A}_{\bm o}}(\relint(F))\big)$.
\end{proof}
With \autoref{Face-Characterization-1}, we give an alternative characterization of the open faces of real derived arrangements.
\begin{theorem}\label{Two-Operators}
Let $S$ be a nonempty subset in $\mathbb{R}^m$. Then  $\face_{\mathcal{A}_{\bm o}}\big(\sign_{\mathcal{A}_{\bm o}}(S)\big)=S$ if and only if $S$ is a open face of $\delta\mathcal{A}_{\bm o}$.
\end{theorem}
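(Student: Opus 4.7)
The forward direction ($S = \relint(F)$ implies $\face_{\mathcal{A}_{\bm o}}(\sign_{\mathcal{A}_{\bm o}}(S)) = S$) is exactly \autoref{Face-Characterization-1}, so the plan is to prove the converse: if $\face_{\mathcal{A}_{\bm o}}(\sign_{\mathcal{A}_{\bm o}}(S)) = S$ for a nonempty $S \subseteq \mathbb{R}^m$, then $S$ is an open face of $\delta\mathcal{A}_{\bm o}$.

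My first step is to exploit the partition \eqref{Whole-Space-Partition}: $\mathbb{R}^m = \bigsqcup_{F \in \mathcal{F}(\delta\mathcal{A}_{\bm o})} \relint(F)$. By (i)$\Leftrightarrow$(ii) of \autoref{Three-Type-Sign}, for each face $F$ all vectors $\bm a \in \relint(F)$ share a common sign set, which I will denote $\mathcal{S}_F := \Sign(\mathcal{A}_{\bm a})$ for any $\bm a \in \relint(F)$. Letting $\mathcal{G}(S) := \{F \in \mathcal{F}(\delta\mathcal{A}_{\bm o}) : \relint(F) \cap S \neq \emptyset\}$, the sign operator \eqref{Sign-Operator} rewrites as
\[
\sign_{\mathcal{A}_{\bm o}}(S) = \bigcup_{F \in \mathcal{G}(S)} \mathcal{S}_F,
\]
and, using \eqref{Face-Operator}, the face operator yields
\[
\face_{\mathcal{A}_{\bm o}}(\sign_{\mathcal{A}_{\bm o}}(S)) = \bigsqcup_{F' \in \mathcal{H}} \relint(F'), \qquad \mathcal{H} := \bigl\{F' \in \mathcal{F}(\delta\mathcal{A}_{\bm o}) : \mathcal{S}_{F'} \supseteq \mathcal{S}_F \text{ for all } F \in \mathcal{G}(S)\bigr\}.
\]

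The key step is then to apply \autoref{Fact}: for any two distinct faces $F' \neq F$ of $\delta\mathcal{A}_{\bm o}$ one has $\mathcal{S}_{F'} \nsupseteq \mathcal{S}_F$. Hence the condition $\mathcal{S}_{F'} \supseteq \mathcal{S}_F$ forces $F' = F$, so $F' \in \mathcal{H}$ only if $F' = F$ for every $F \in \mathcal{G}(S)$. Since $S$ is nonempty, $\mathcal{G}(S)$ is nonempty; and if $\mathcal{G}(S)$ contained two distinct faces, $\mathcal{H}$ would be empty, contradicting $\face_{\mathcal{A}_{\bm o}}(\sign_{\mathcal{A}_{\bm o}}(S)) = S \neq \emptyset$. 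Therefore $\mathcal{G}(S) = \{F_0\}$ for a unique face $F_0$, giving $\mathcal{H} = \{F_0\}$ as well, and thus
\[
S = \face_{\mathcal{A}_{\bm o}}(\sign_{\mathcal{A}_{\bm o}}(S)) = \relint(F_0),
\]
which is the desired conclusion.

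The main obstacle is keeping the bookkeeping clean between the two operators: the sign operator is a \emph{union} over faces touching $S$, whereas the face operator is an \emph{intersection} over sign vectors, which in turn reorganizes as a union over faces whose sign set dominates every $\mathcal{S}_F$ for $F \in \mathcal{G}(S)$. Once this combinatorial translation is in place, everything is driven by \autoref{Fact}, which supplies the rigidity needed to force $\mathcal{G}(S)$ to be a singleton. No genuine geometry beyond the partition \eqref{Whole-Space-Partition} and the sign-versus-face dictionary from \autoref{Three-Type-Sign} is required.
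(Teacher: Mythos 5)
Your proof is correct, and it takes a somewhat more explicit, global route than the paper. The paper's argument for necessity is localized: it picks one point $\bm a \in S$, sets $F_{\bm a}$ to be the face of $\delta\mathcal{A}_{\bm o}$ containing $\bm a$ in its relative interior, uses \autoref{Three-Type-Sign} to identify $\sign_{\mathcal{A}_{\bm o}}(\relint(F_{\bm a}))$ with $\Sign(\mathcal{A}_{\bm a})$, and then reuses \autoref{Face-Characterization-1} to conclude $S=\relint(F_{\bm a})$ (implicitly invoking the order-reversing behavior of the face operator to first pin $S$ inside $\relint(F_{\bm a})$). You instead decompose the whole argument across the face partition \eqref{Whole-Space-Partition}, identifying the sets $\mathcal{G}(S)$ and $\mathcal{H}$ and applying \autoref{Fact} directly to force both to be the same singleton. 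Effectively you unpack the mechanism that drives \autoref{Face-Characterization-1} rather than reusing that lemma as a black box for the converse. The two routes rest on the same three ingredients — the partition, the sign-face dictionary in \autoref{Three-Type-Sign}, and the incomparability statement \autoref{Fact} — but yours makes the combinatorial bookkeeping (why $S$ cannot straddle two open faces, why the face operator can only recover the face it came from) fully explicit, at the cost of being longer. One small caveat: your closing remark that no geometry beyond the partition and the sign-face dictionary is needed slightly undersells \autoref{Fact}, which encodes a genuine geometric statement (\autoref{Separate-Cells}) and is the load-bearing step in both arguments.
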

\begin{proof}
\autoref{Face-Characterization-1} indicates that the sufficiency holds. For necessity, given a point $\bm a\in S$, we assert $S=\relint(F_{\bm a})$ with $F_{\bm a}\in\mathcal{F}(\delta\mathcal{A}_{\bm o})$. From (i) and (ii) in \autoref{Three-Type-Sign}, we have
\[
\sign_{\mathcal{A}_{\bm o}}\big(\relint(F_{\bm a})\big)=\Sign(\mathcal{A}_{\bm a}).
\]
Applying \autoref{Face-Characterization-1} again, we derive $S=\relint(F_{\bm a})$, which  completes the proof.
\end{proof}
Furthermore, \autoref{Two-Operators} directly leads to the following result.
\begin{corollary}\label{New-Derived-Arrangement}
Let $\mathcal{A}$ be a hyperplane arrangement in $\mathbb{R}^m$. If $\face_{\mathcal{A}_{\bm o}}\big(\sign_{\mathcal{A}_{\bm o}}(\relint(F))\big)=\relint(F)$ for any face $F\in\mathcal{F}(\mathcal{A})$, then $\mathcal{A}$ coincides with the derived arrangement $\delta\mathcal{A}_{\bm o}$ of $\mathcal{A}_{\bm o}$.
\end{corollary}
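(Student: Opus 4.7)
The plan is straightforward: I would first apply \autoref{Two-Operators} to translate the hypothesis into the geometric statement that every $\relint(F)$ with $F\in\mathcal{F}(\mathcal{A})$ is in fact an open face of $\delta\mathcal{A}_{\bm o}$. Under this translation it suffices to prove the two set-theoretic inclusions $\mathcal{A}\subseteq\delta\mathcal{A}_{\bm o}$ and $\delta\mathcal{A}_{\bm o}\subseteq\mathcal{A}$; these will be handled by dimensional arguments at the $(m-1)$-dimensional open faces and at the top-dimensional regions, respectively.

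For $\mathcal{A}\subseteq\delta\mathcal{A}_{\bm o}$, I would fix $H\in\mathcal{A}$. Since $H\in L(\mathcal{A})$, the restriction $\mathcal{A}/H$ decomposes $H$ into open regions of dimension $m-1$; taking the closure of any such region yields a face $F\in\mathcal{F}(\mathcal{A})$ with $\dim F=m-1$ and $\relint(F)\subseteq H$. By the hypothesis combined with \autoref{Two-Operators}, $\relint(F)$ is an $(m-1)$-dimensional open face of $\delta\mathcal{A}_{\bm o}$, so its affine hull is a unique hyperplane $H_{\bm c^C}\in\delta\mathcal{A}_{\bm o}$ (noting that two distinct circuit hyperplanes intersect in a flat of dimension $\le m-2$, so every $(m-1)$-dimensional element of $L(\delta\mathcal{A}_{\bm o})$ comes from a single hyperplane). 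On the other hand $\aff(\relint(F))=H$ by a dimension count, so $H=H_{\bm c^C}\in\delta\mathcal{A}_{\bm o}$.

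For the reverse inclusion $\delta\mathcal{A}_{\bm o}\subseteq\mathcal{A}$, I would fix $H_{\bm c^C}\in\delta\mathcal{A}_{\bm o}$ and select a generic point $\bm p\in H_{\bm c^C}$ lying on no other hyperplane of $\delta\mathcal{A}_{\bm o}$; such a $\bm p$ exists because $\delta\mathcal{A}_{\bm o}$ is finite and each $H_{\bm c^{C'}}\cap H_{\bm c^C}$ with $C'\ne C$ is a proper affine subspace of $H_{\bm c^C}$. Suppose $\bm p$ lay on no hyperplane of $\mathcal{A}$; then $\bm p$ would be an interior point of some top-dimensional region $R\in\mathcal{F}(\mathcal{A})$, and by the hypothesis together with \autoref{Two-Operators}, $\relint(R)=R$ would be an $m$-dimensional open face of $\delta\mathcal{A}_{\bm o}$, i.e., a region disjoint from every hyperplane of $\delta\mathcal{A}_{\bm o}$, contradicting $\bm p\in R\cap H_{\bm c^C}$. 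Hence $\bm p\in H$ for some $H\in\mathcal{A}$; the previous paragraph gives $H\in\delta\mathcal{A}_{\bm o}$, and the genericity of $\bm p$ forces $H=H_{\bm c^C}$, so $H_{\bm c^C}\in\mathcal{A}$ as desired.

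The only real obstacle is the routine dimensional bookkeeping: verifying that every hyperplane of $\mathcal{A}$ actually carries an $(m-1)$-dimensional face (handled by restricting $\mathcal{A}$ to $H$), that an $(m-1)$-dimensional open face of $\delta\mathcal{A}_{\bm o}$ determines a unique ambient hyperplane $H_{\bm c^C}$ (handled by intersection-poset considerations), and that a sufficiently generic point on $H_{\bm c^C}$ exists (handled by the finiteness of $\delta\mathcal{A}_{\bm o}$). Beyond these routine facts, the argument is a direct application of \autoref{Two-Operators}, and no deeper structural information about circuit vectors is required.
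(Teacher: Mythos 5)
Your proof is correct and, modulo one cosmetic slip, fills in the gap behind the paper's unproved ``directly leads to.'' The paper almost certainly has in mind the slicker partition observation: by \autoref{Two-Operators} every $\relint(F)$ with $F\in\mathcal{F}(\mathcal{A})$ is an open face of $\delta\mathcal{A}_{\bm o}$, and since the open faces of each of $\mathcal{A}$ and $\delta\mathcal{A}_{\bm o}$ already partition $\mathbb{R}^m$ (cf.\ \eqref{Whole-Space-Partition}), a partition cannot have a proper sub-collection that is also a partition, so the two collections of open faces coincide, and then the hyperplanes of both arrangements (being exactly the $(m-1)$-dimensional flats arising as affine hulls of $(m-1)$-dimensional faces) coincide. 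Your route proves the two inclusions $\mathcal{A}\subseteq\delta\mathcal{A}_{\bm o}$ and $\delta\mathcal{A}_{\bm o}\subseteq\mathcal{A}$ separately by dimension counting and a generic-point contradiction; this is more explicit but logically equivalent, and it makes the recovery of hyperplanes from faces visible rather than implicit. One small correction: for a top-dimensional face $R\in\mathcal{F}(\mathcal{A})$ you write $\relint(R)=R$, which is false since $R$ is closed; you mean only that $\relint(R)$ is an $m$-dimensional open face of $\delta\mathcal{A}_{\bm o}$ containing $\bm p$, and the contradiction with $\bm p\in H_{\bm c^C}$ then goes through unchanged.
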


Notably, by uniformly substituting $\mathcal{A}_{\bm a}$ with either $c\mathcal{A}_{\bm a}$ or $\mathcal{A}^{\bm a}$ in \eqref{Sign-Operator} and \eqref{Face-Operator}, \autoref{Three-Type-Sign} ensures that \autoref{Two-Operators} and \autoref{New-Derived-Arrangement} hold in both cases.
\section*{Acknowledgements}
The first author is supported by National Natural Science Foundation of China under Grant No. 12301424. The third author is supported by National Natural Science Foundation of China under Grant No. 12171114.

\end{document}